\documentclass[11pt,oneside,reqno]{amsart}

\usepackage{graphicx}
\usepackage{pict2e}
\usepackage{amssymb}
\usepackage{amsthm}                
\usepackage[margin=1in]{geometry}  
\usepackage{graphics}
\usepackage{epstopdf}
\usepackage{amsmath}
\usepackage{graphicx}
\usepackage{subfigure}
\usepackage{latexsym}
\usepackage{amsmath}
\usepackage{amsfonts}
\usepackage{amssymb}
\usepackage{tikz}
\usepackage{mathdots}
\usepackage{comment}
\usepackage{float}
\usepackage[mathscr]{euscript}
\usepackage{enumerate}
\usepackage{epsfig}
\usepackage { graphicx }
\usepackage { hyperref }
\usepackage { graphics }
\usepackage { graphicx }
\usepackage{xparse}
\usepackage{epstopdf}
\usepackage {eufrak}
\usepackage[utf8]{inputenc}
\usepackage{longtable}

\usepackage[lite]{amsrefs}

\theoremstyle{definition}
\newtheorem{theorem}{Theorem}[section]
\newtheorem{lemma}[theorem]{Lemma}

\theoremstyle{definition}
\newtheorem{definition}[theorem]{Definition}
\newtheorem{example}[theorem]{Example}
\theoremstyle{remark}
\newtheorem{remark}[theorem]{Remark}
\numberwithin{equation}{section}

\bibliographystyle{alphanum} 

\numberwithin{equation}{section}

\begin{document}

\title{ The colored Jones polynomial of singular knots}

\author{Khaled Bataineh}
\address{Department of Mathematics and Statistics, Jordan University of Science and Technology, Irbid 22110 Jordan 
}
\email{khaledb@just.edu.jo }

\author{Mohamed Elhamdadi}
\address{Department of Mathematics, University of South Florida, 
Tampa, FL 33647 USA}
\email{emohamed@math.usf.edu }

\author{Mustafa Hajij}
\address{Department of Mathematics, University of South Florida, 
Tampa, FL 33647 USA}
\email{mhajij@usf.edu}





\begin{abstract}
We generalize the colored Jones polynomial to $4$-valent graphs. This generalization is given as a sequence of invariants in which the first term is a one variable specialization of the Kauffman-Vogel polynomial.  We use the invariant we construct to give a sequence of singular braid group representations.  
\end{abstract}

 \maketitle

 \tableofcontents
\section{Introduction}

The study of singular knots, or equivalently rigid $4$-valent graphs, and their invariants was generated largely by the theory of Vassiliev invariants. Many existing knot invariants have been extended to singular knot invariants. In \cite{Birman}, Birman introduced braids in the theory of Vassiliev via the singular braids and conjectured that the monoid of singular braids maps injectively into the group algebra of the braid group.  A proof of this conjecture was given by Paris in \cite{Paris}. Fiedler extended the Kauffman state models of the Jones and Alexander polynomials to the context of singular knots \cite{Fiedler}.  In \cite{Gemein } Gemein investigated extensions of the Artin representation and the Burau representation to the singular braid monoid and the relations between them. Juyumaya and Lambropoulou constructed a Jones-type invariant for singular links using a Markov trace on a variation of the Hecke algebra \cite{JL}. In \cite{kaufvog} Kauffman and Vogel defined a polynomial invariant of embedded $4$-valent graph in $\mathbb{R}^3$ extending an invariant for links in $\mathbb{R}^3$ called  the Kauffman polynomial \cite{kaufgraph}. The latter is a two variable polynomial that takes value in $\mathbb{Z}[a,a^{-1},z]$ and is an invariant of regular isotopy for links. The Kauffman polynomial of a link $L$ is denoted by $[L]$ and is defined via the following axioms :
\begin{enumerate}
\item 
$  \left[
   \begin{minipage}[h]{0.08\linewidth}
        \vspace{0pt}
        \scalebox{0.15}{\includegraphics{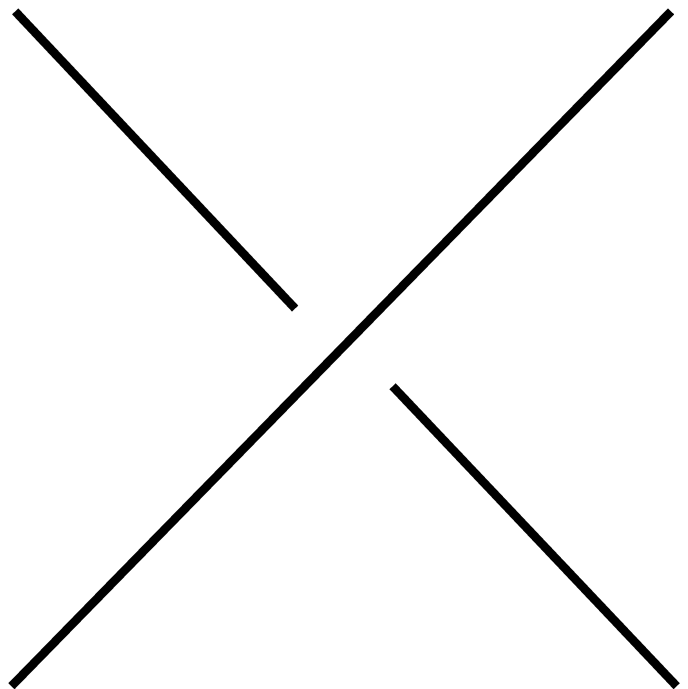}}
   \end{minipage}
  \right] - \left[
   \begin{minipage}[h]{0.08\linewidth}
        \vspace{0pt}
        \scalebox{0.15}{\includegraphics{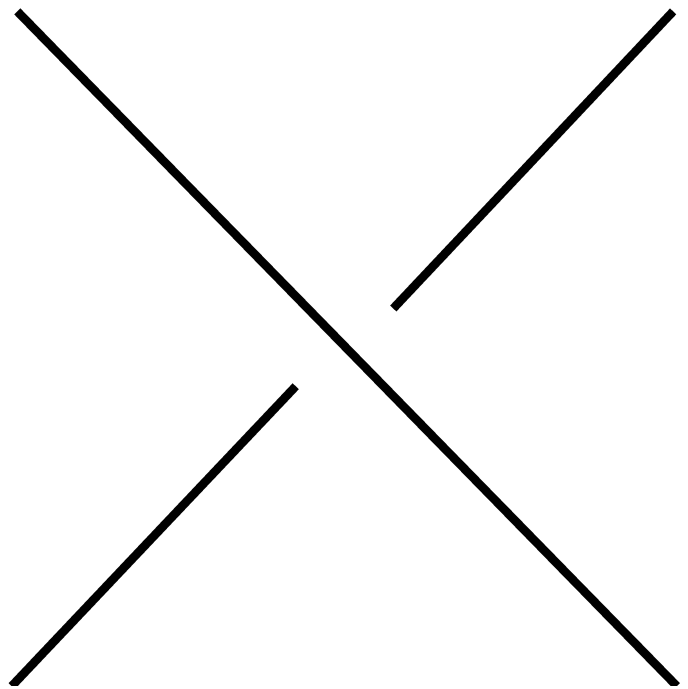}}
   \end{minipage}
  \right]=z \left(
  \left[
   \begin{minipage}[h]{0.08\linewidth}
        \vspace{0pt}
        \scalebox{0.15}{\includegraphics{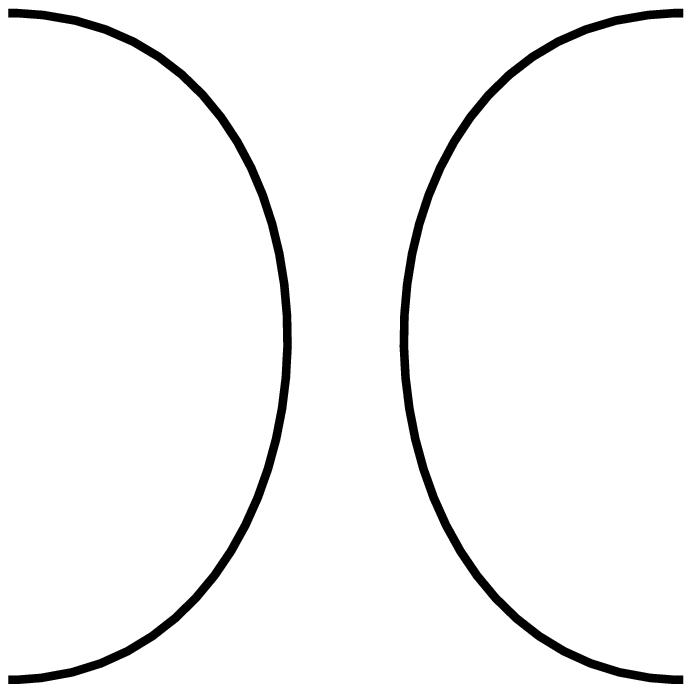}}
   \end{minipage}\right]
  -\left[
   \begin{minipage}[h]{0.08\linewidth}
        \vspace{0pt}
        \scalebox{0.15}{\includegraphics{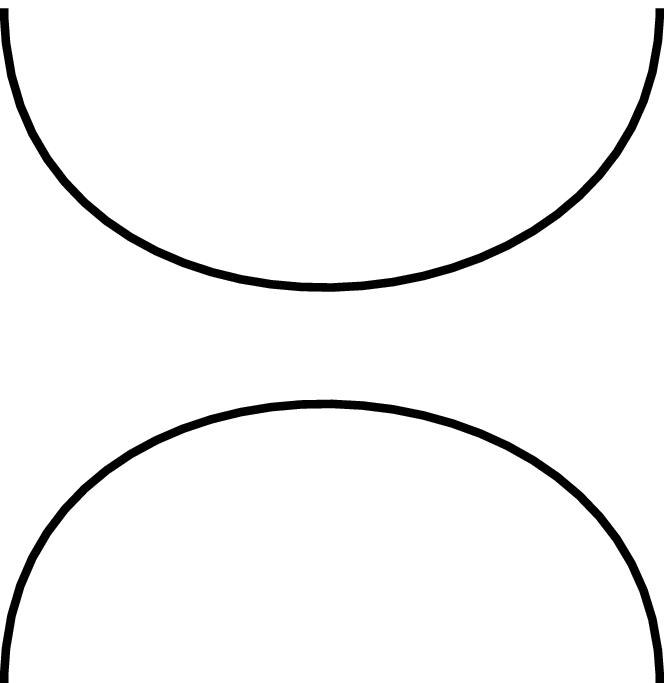}}
   \end{minipage}
   \right] \right). $
\vspace{3pt}
\item
$ \left[ \begin{minipage}[h]{0.035\linewidth}
        \vspace{0pt}
        \scalebox{0.25}{\includegraphics{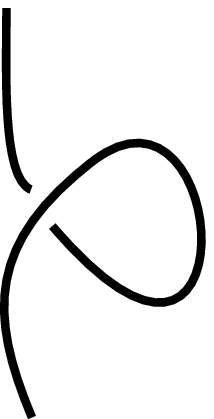}}
   \end{minipage}  \right]= a  \left[ \begin{minipage}[h]{0.035\linewidth}
        \vspace{0pt}
        \scalebox{0.25}{\includegraphics{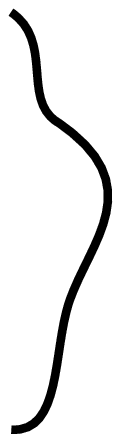}}
   \end{minipage} \right] $ and $\left[ \begin{minipage}[h]{0.035\linewidth}
        \vspace{0pt}
        \scalebox{0.25}{\includegraphics{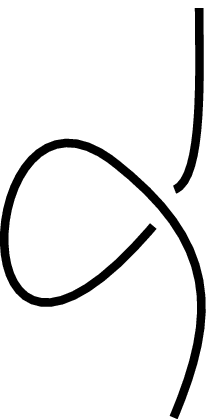}}
   \end{minipage}  \right]= a^{-1}  \left[ \begin{minipage}[h]{0.035\linewidth}
        \vspace{0pt}
        \scalebox{0.25}{\includegraphics{R_1P}}
   \end{minipage} \right] $. 
\item
\vspace{3pt}
$ \left[ \begin{minipage}[h]{0.05\linewidth}
        \vspace{0pt}
        \scalebox{0.02}{\includegraphics{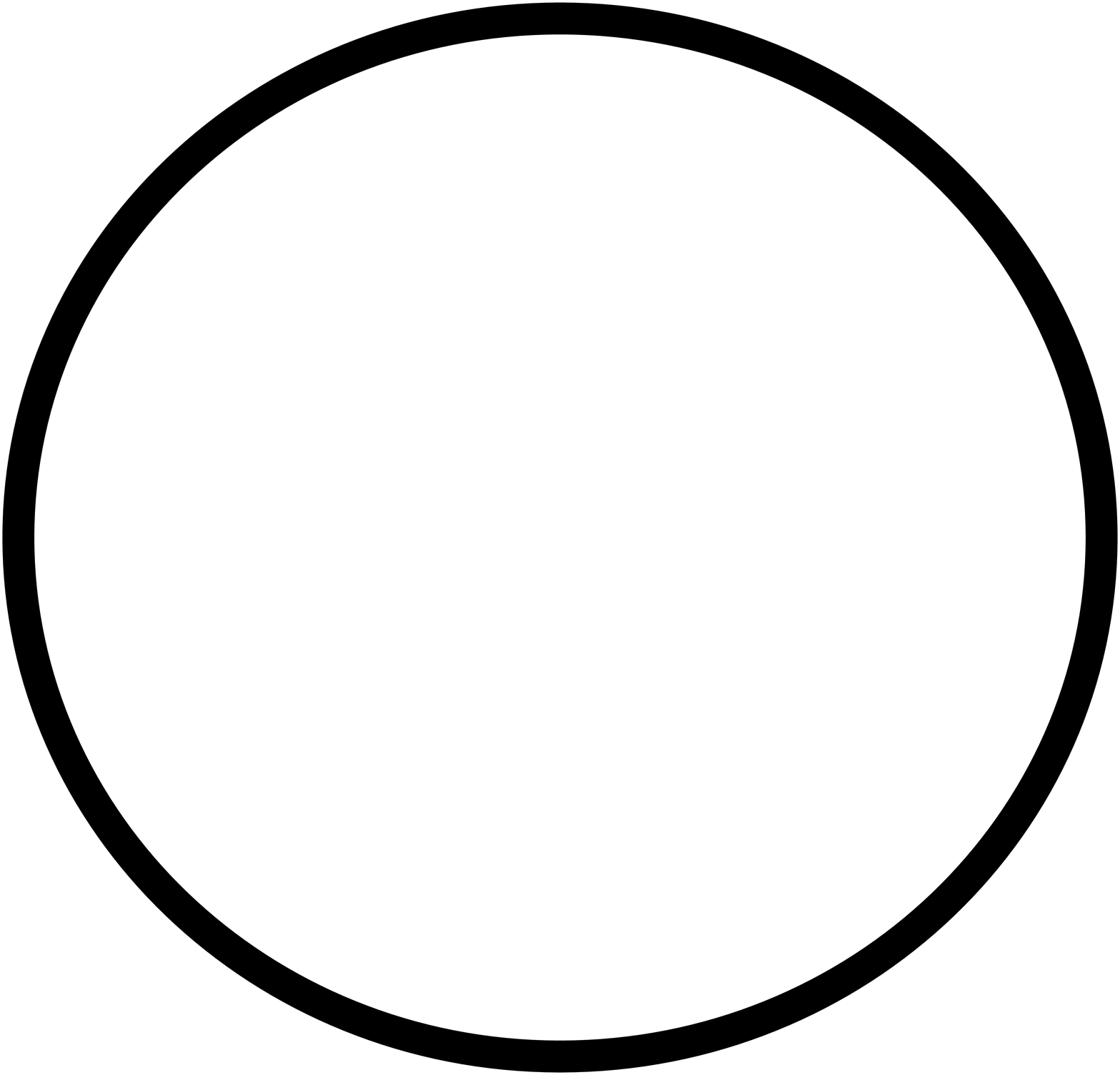}}
   \end{minipage}  \right]= 1.$

\end{enumerate}
The Kauffman polynomial is also called \textit{the Dubrovnik polynomial}. This invariant was extended to a $3$-variable function for embedded $4$-valent graphs in $\mathbb{R}^3$ to the \textit{Kauffman-Vogel polynomial} \cite{kaufvog} by adding the following axiom :

$$ \left[
   \begin{minipage}[h]{0.08\linewidth}
        \vspace{0pt}
        \scalebox{0.08}{\includegraphics{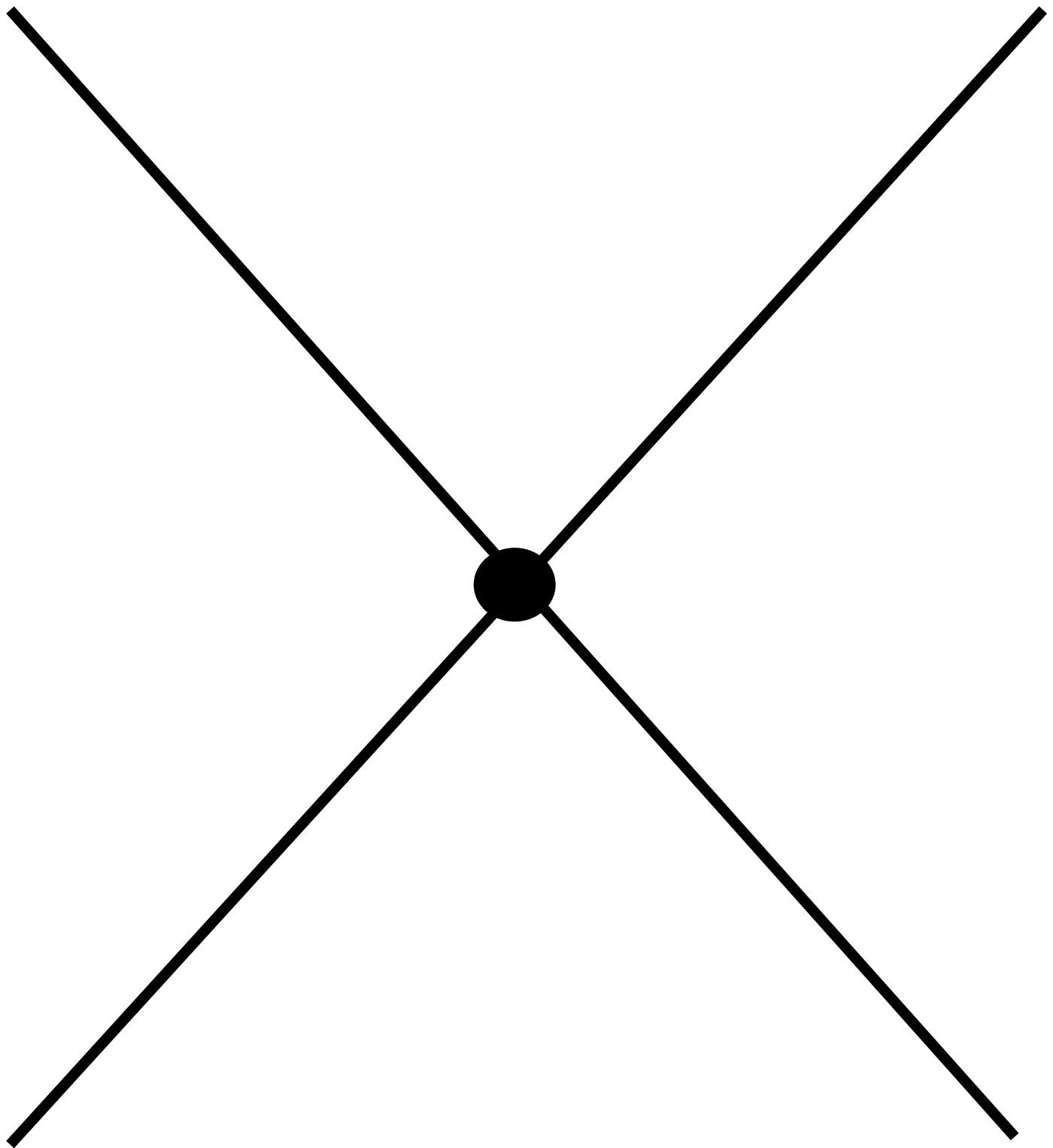}}
   \end{minipage}
  \right]=  \left[
   \begin{minipage}[h]{0.08\linewidth}
        \vspace{0pt}
        \scalebox{0.15}{\includegraphics{pos_crossing}}
   \end{minipage}
  \right]  - A  \left[
   \begin{minipage}[h]{0.08\linewidth}
        \vspace{0pt}
        \scalebox{0.15}{\includegraphics{other_smoothing}}
   \end{minipage}\right]
  - B\left[
   \begin{minipage}[h]{0.07\linewidth}
        \vspace{0pt}
        \scalebox{0.15}{\includegraphics{one_smoothing}}
   \end{minipage}
   \right]$$
where $A$ and $B$ are commuting variables and $A -B = z$. A specialization of the Kauffman-Vogel polynomial invariant can be obtained using the skein theory associated with the Kauffman bracket \cite{KauffLin}. This version is a one variable specialization of the Kauffman-Vogel polynomial and it is defined by using \textit{Jones-Wenzl} projector \cite{J2,Wenzl}. The purpose of this paper is to give a generalization of this version of the  Kauffman-Vogel polynomial. Our generalization is given in the form of a sequence of invariants whose first term is the one variable specialization of the Kauffman-Vogel polynomial. The sequence of invariants gives us naturally a sequence of singular braid representations.

The organization of the paper is as follows. In section \ref{sec2} we give the necessary background needed in this paper. In section \ref{sec3} the one variable specialization of the Kauffman-Vogel polynomial is defined. In section \ref{sec4} we introduce our generalization of this polynomial. In section \ref{sec5} we show how to use this invariant to give a sequence of singular braid representations.   
 



\section{The Kauffman Bracket Skein Module}\label{sec2}
%
%
%
%
%
%
%
%
%
%
%
%
%
In this section we review the definition of the Kauffman bracket skein module of a $3$-manifold $M$ over a commutative ring $\mathcal{R}$. 
A framed link in $M$ is an oriented embedding of a disjoint union of oriented annuli in $M$.  A framed point in the boundary $\partial M$ of $M$ is a closed interval in $\partial M$. Let $x$ and $y$ be framed points in $\partial M$. A \textit{band} in $M$ is an oriented embedding of $I \times I$ into $M$ that meets $\partial M$ orthogonally at $x$ and $y$.

\begin{definition} \cite{Przytycki}
Let $M$ be a $3$-oriented manifold and $\mathcal{R}$ be a commutative ring with a unit and an invertible element $A$. Let $\mathcal{L}_M$ denotes the set of all isotopy classes of unoriented framed links in $M$. Here we consider the empty link to be an element of $\mathcal{L}_M$. Let $\mathcal{R}\mathcal{L}_M$ be the free $\mathcal{R}$-module generated by $\mathcal{L}_M$. The Kauffman bracket skein module of the $3$-manifold $M$ and the ring $\mathcal{R}$ is the quotient given by:
\begin{equation}
\mathcal{S}(M,\mathcal{R},A)=\mathcal{R}\mathcal{L}_M/R(M),
\end{equation}
where $R(M)$ is the submodule of $\mathcal{R}\mathcal{L}_M$ generated by all expressions of the form
\begin{eqnarray*}(1)\hspace{3 mm}
  \begin{minipage}[h]{0.06\linewidth}
        \vspace{0pt}
        \scalebox{0.04}{\includegraphics{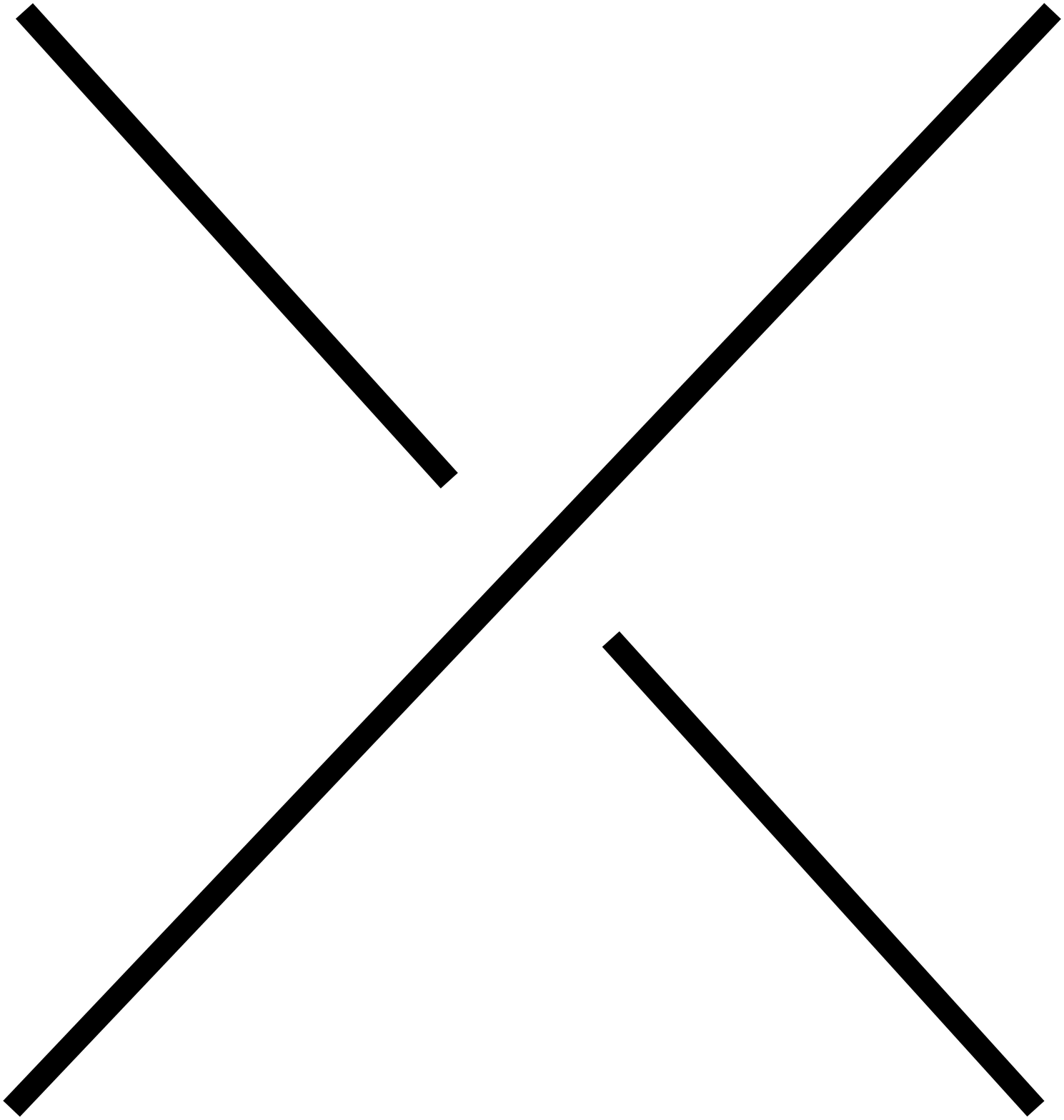}}
   \end{minipage}
   -
     A 
  \begin{minipage}[h]{0.06\linewidth}
        \vspace{0pt}
        \scalebox{0.04}{\includegraphics{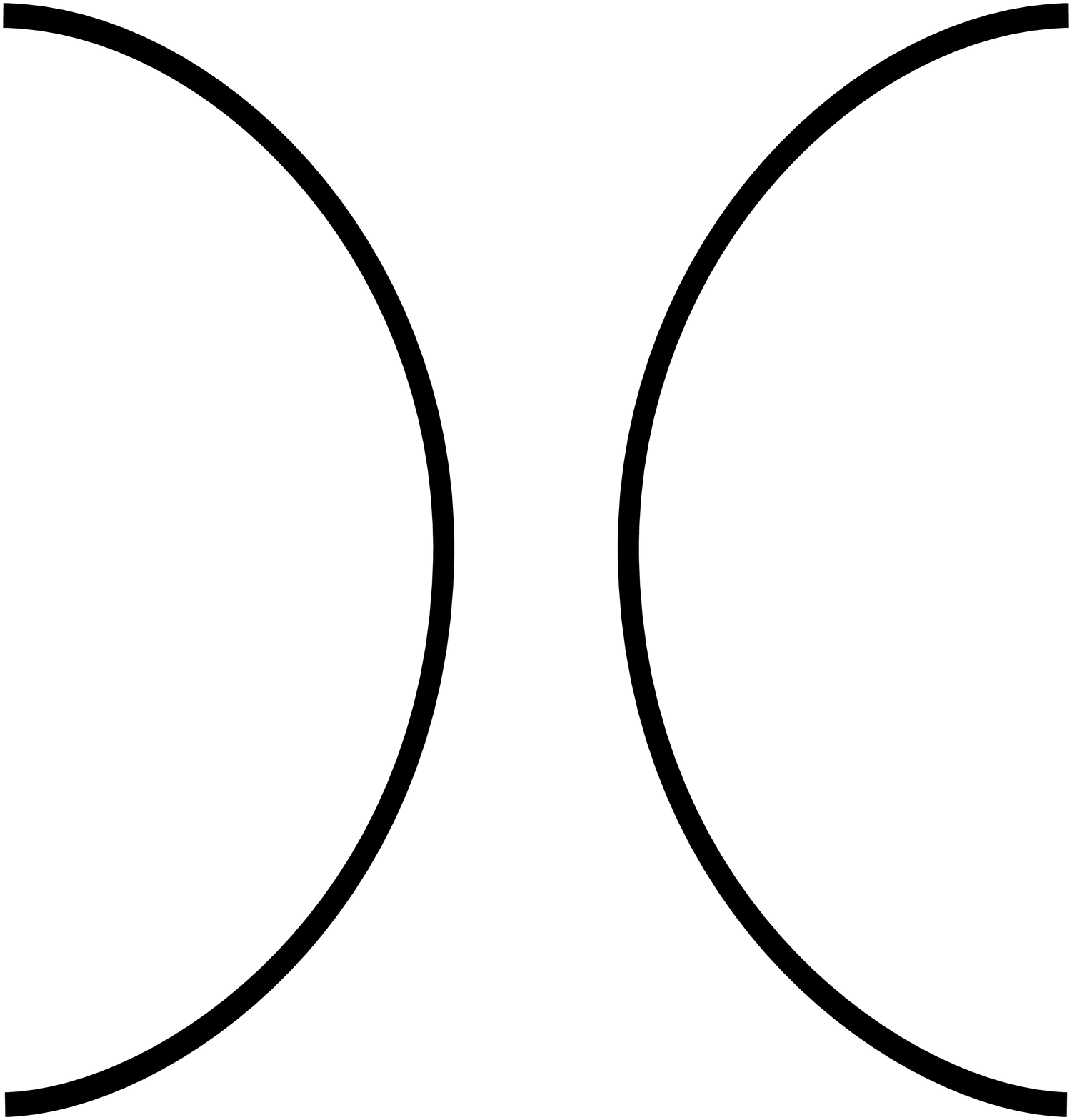}}
   \end{minipage}
   -
  A^{-1} 
  \begin{minipage}[h]{0.06\linewidth}
        \vspace{0pt}
        \scalebox{0.04}{\includegraphics{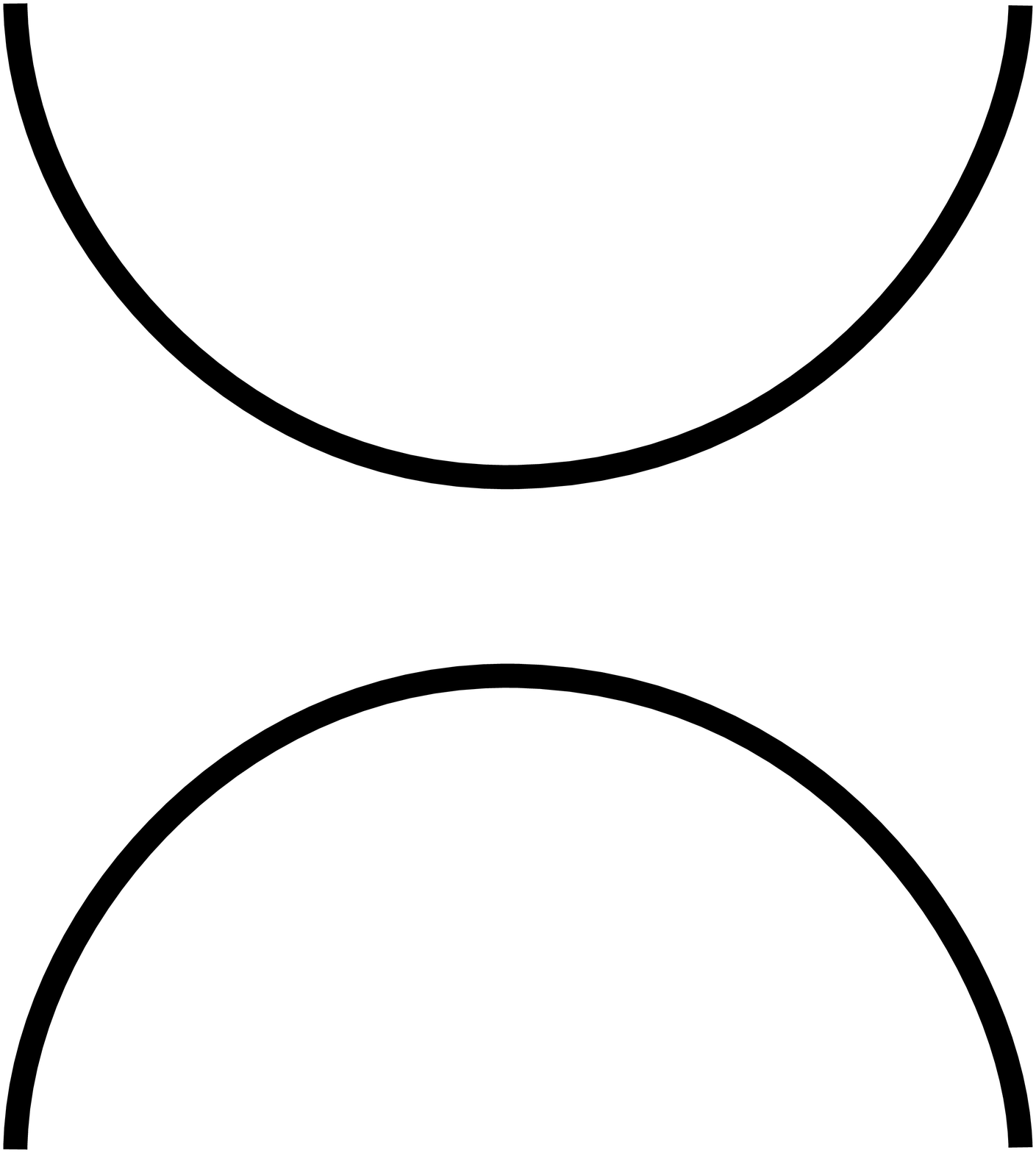}}
   \end{minipage}
, \hspace{20 mm}
  (2)\hspace{3 mm} L\sqcup
   \begin{minipage}[h]{0.05\linewidth}
        \vspace{0pt}
        \scalebox{0.02}{\includegraphics{simple-circle}}
   \end{minipage}
  +
  (A^{2}+A^{-2})L, 
  \end{eqnarray*}
where $L\sqcup$ \begin{minipage}[h]{0.05\linewidth}
        \vspace{0pt}
        \scalebox{0.02}{\includegraphics{simple-circle}}
   \end{minipage}  consists of a framed link $L$ in $M$ and the trivial framed knot 
   \begin{minipage}[h]{0.05\linewidth}
        \vspace{0pt}
        \scalebox{0.02}{\includegraphics{simple-circle}}
   \end{minipage}.

\end{definition}
We will sometimes drop the ring $\mathcal{R}$ from the notation and refer to the Kauffman bracket skein module of the manifold $M$ and the ring $\mathcal{R}$ simply by $\mathcal{S}(M)$ when the context is clear. The definition of the Kauffman bracket skein module can be extended to $3$-manifolds with boundaries. Let $x_1, \cdots , x_{2n}$ be a set, possibly empty, of designated framed points on $\partial M$. Let $\mathcal{L}_M$ be the set of all surfaces in $M$ decomposed into a union of finite number of framed links and bands joining the points $\{ x_i \}_{i=1}^{2n}$. The relative Kauffman bracket skein module is defined to be \begin{equation}
\mathcal{S}(M,\mathcal{R},A,\{ x_i \}_{i=1}^{2n})=\mathcal{R}\mathcal{L}_M/R(M).
\end{equation}
It can be shown that the definition of the relative Kauffman bracket skein module is independent of the choice of the position of the points $\{ x_i \}_{i=1}^{2n}$.  Furthermore, the construction of the relative Kauffman bracket skein module is functorial in the sense that an embedding of oriented $3$-manifolds with $2n$ (framed) points on the boundaries 
\begin{equation}
j: (M,\{ x_i \}_{i=1}^{2n}) \hookrightarrow (N,\{ y_i \}_{i=1}^{2n})
\end{equation}
 induces a homomorphism of $\mathcal{R}$-modules  \begin{equation}
\mathcal{S}(M,\mathcal{R},A,\{ x_i \}_{i=1}^{2n}) \rightarrow \mathcal{S}(N,\mathcal{R},A,\{ y_i \}_{i=1}^{2n}).  
\end{equation}
When the $3$-manifold $M$ is homeomorphic to $F\times I$ where $S$ an oriented surface with a finite set of points (possibly empty) in its boundary $\partial  F$ and $I$ is an interval, then one can project framed links in $M$ to link diagrams in $F$.\\

 The first example of the Kauffman bracket skein module that we will consider in this paper is the Kauffman bracket skein module of the $3$-sphere $S^3$. It can be easily shown that this module is free on the empty link, meaning $\mathcal{S}(S^3)= \mathcal{R}$. The second one is the relative Kauffman bracket skein module of $D^3=I\times I \times I$ with $2n$ marked points on its boundary $\partial D^3$.  The first $n$ points are placed on the top edge $D^3$ and the other $n$ points on the bottom edge. Recall that the relative skein module does not depend on the exact position of the points $\{x_i\}_{i=1}^{2n}$. However, we need to specify the position here in order to define an algebra structure on  $\mathcal{S}(D^3,\mathcal{R},A,\{ x_i \}_{i=1}^{2n})$. Let $S_1$ and $S_2$ be two elements in $\mathcal{L}_M$ such that $\partial S_j$, where $j=1,2$, consists of the points $\{x_i\}_{i=1}^{2n}$ that we specified above. Define $S_1 \times S_2$ to be the surface in $D^3$ obtained by attaching $S_1$ on the top of $S_2$ and then compress the result to $D^3$. This multiplication extends to a well-defined multiplication on $\mathcal{S}(D^3,\mathcal{R},A,\{ x_i \}_{i=1}^{2n})$. With this multiplication the module $\mathcal{S}(D^3,\mathcal{R},A,\{ x_i \}_{i=1}^{2n})$ becomes an associative algebra over $\mathcal{R}$ known as the \textit{$n^{th}$ Temperley-Lieb algebra} $TL_n$. For more details see \cite{Przytycki}. Historically, The Temperley–Lieb algebra first arose in the form of some graph-theoretic problems studied in the context of Potts models in statistical mechanics \cite{TemperleyLieb}.  The Temperley–Lieb algebra was independently rediscovered by Jones \cite{JonesPlanarAlgebras} in his work on von Neumann algebras.\\

For the rest of the paper we will fix $\mathcal{R}$ to be $\mathbb{Q}(A)$ the field generated by the indeterminate $A$ over the rational numbers.

\subsection{The Jones-Wenzl Idempotents}
 The Jones-Wenzl idempotent $f^{(n)}\in TL_n$ has proven to be central to understand the Temperley-Lieb algebra and its applications. This idempotent plays a central role in the Witten-Reshetikhin-Turaev Invariants for $SU(2)$ \cite{KauffLin,Lic92,RT}, the colored Jones polynomial and its applications \cite{BHMV,RT,Hajij2,Hajij1,TuraevViro92}, and quantum spin networks \cite{MV}. The Jones-Wenzl idempotent was defined in \cite{J2} and it enjoys a recursive formula due to Wenzl \cite{Wenzl} : 
\begin{align}
\label{recursive}
  \begin{minipage}[h]{0.05\linewidth}
        \vspace{0pt}
        \scalebox{0.12}{\includegraphics{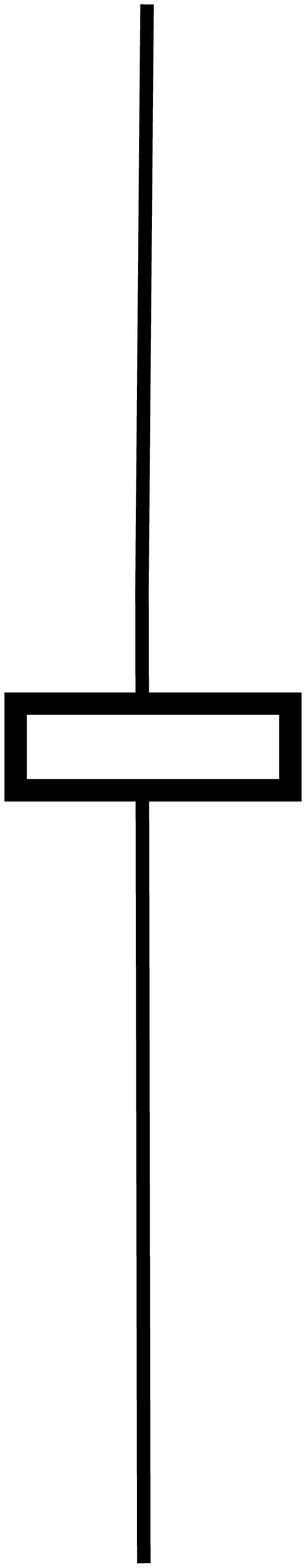}}
         \put(-20,+70){\footnotesize{$n$}}
   \end{minipage}
   =
  \begin{minipage}[h]{0.08\linewidth}
        \hspace{8pt}
        \scalebox{0.12}{\includegraphics{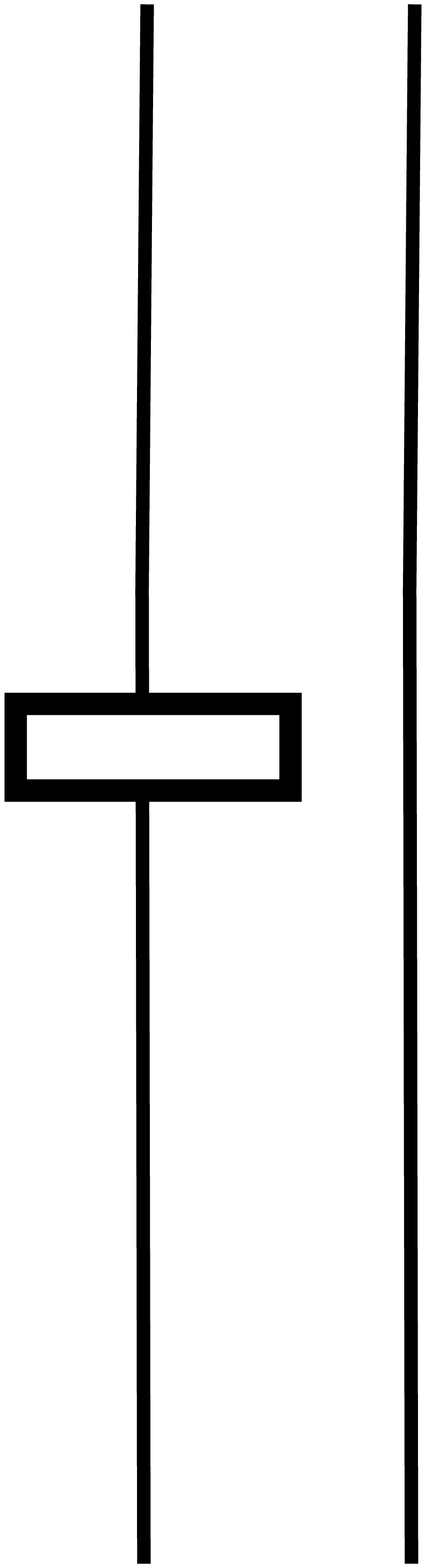}}
        \put(-42,+70){\footnotesize{$n-1$}}
        \put(-8,+70){\footnotesize{$1$}}
   \end{minipage}
   \hspace{9pt}
   -
 \Big( \frac{\Delta_{n-2}}{\Delta_{n-1}}\Big)
  \hspace{9pt}
  \begin{minipage}[h]{0.10\linewidth}
        \vspace{0pt}
        \scalebox{0.12}{\includegraphics{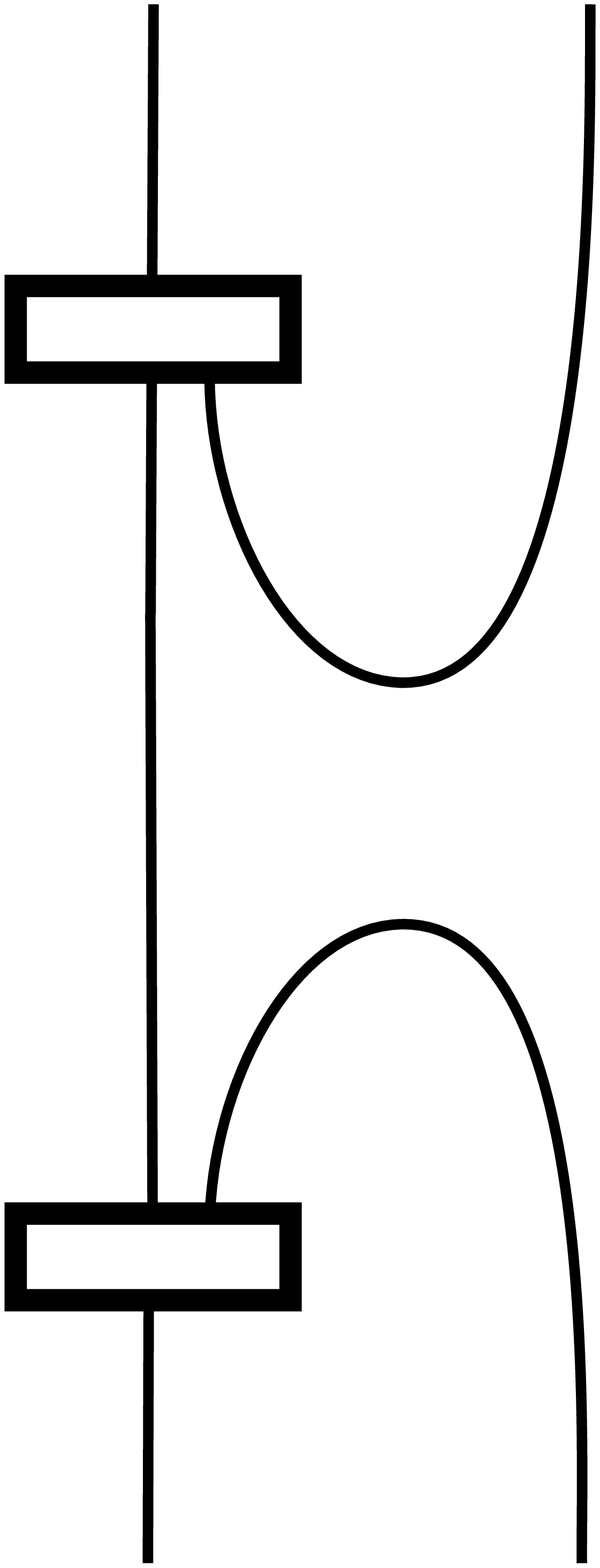}}
         \put(2,+85){\footnotesize{$1$}}
         \put(-52,+87){\footnotesize{$n-1$}}
         \put(-25,+47){\footnotesize{$n-2$}}
         \put(2,+10){\footnotesize{$1$}}
         \put(-52,+5){\footnotesize{$n-1$}}
   \end{minipage}
  , \hspace{20 mm}
    \begin{minipage}[h]{0.05\linewidth}
        \vspace{0pt}
        \scalebox{0.12}{\includegraphics{nth-jones-wenzl-projector}}
        \put(-20,+70){\footnotesize{$1$}}
   \end{minipage}
  =
  \begin{minipage}[h]{0.05\linewidth}
        \vspace{0pt}
        \scalebox{0.12}{\includegraphics{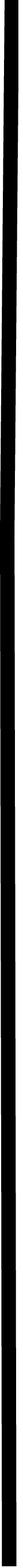}}
   \end{minipage}   
  \end{align}
  where 
\begin{equation*}
 \Delta_{n}=(-1)^{n}\frac{A^{2(n+1)}-A^{-2(n+1)}}{A^{2}-A^{-2}}.
\end{equation*} 

The graphical notation of $f^{(n)}$ is due to Lickorish \cite{Lic92}. The idempotent satisfies the following properties: 

\begin{eqnarray}
\label{properties}
\hspace{0 mm}
    \begin{minipage}[h]{0.21\linewidth}
        \vspace{0pt}
        \scalebox{0.115}{\includegraphics{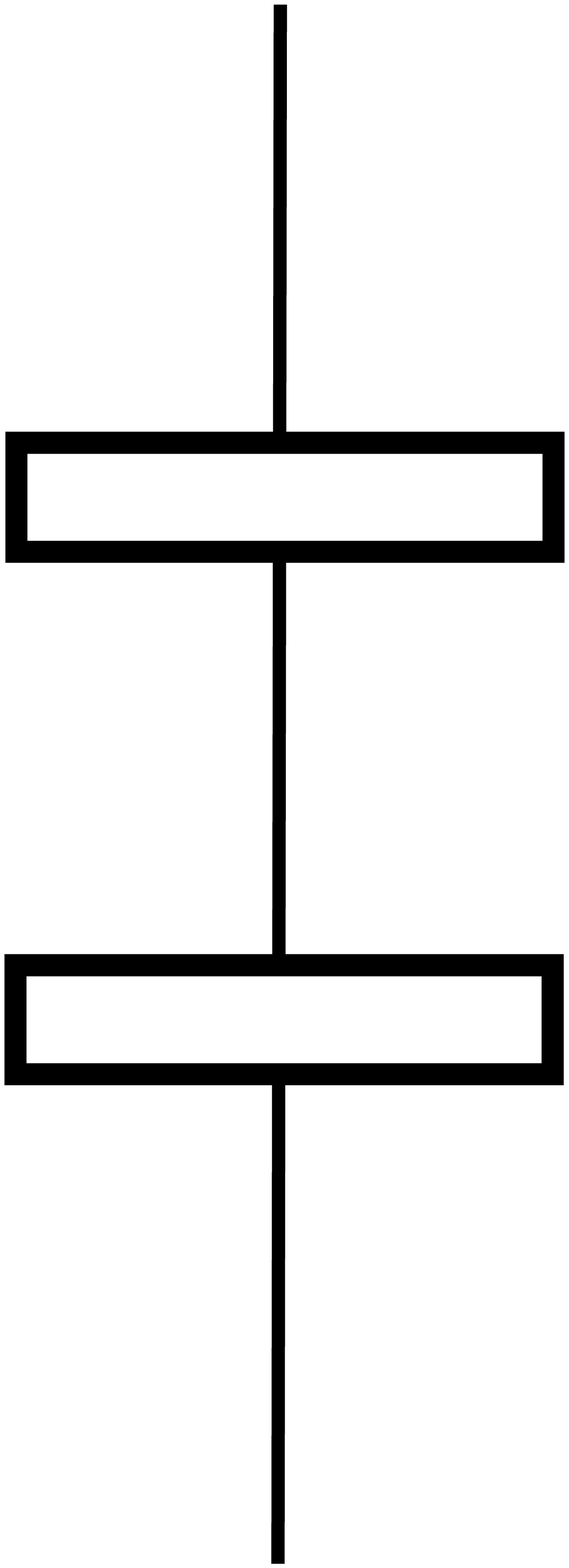}}
        \put(0,+80){\footnotesize{$n$}}
       
   \end{minipage}
  = \hspace{5pt}
     \begin{minipage}[h]{0.1\linewidth}
        \vspace{0pt}
         \hspace{50pt}
        \scalebox{0.115}{\includegraphics{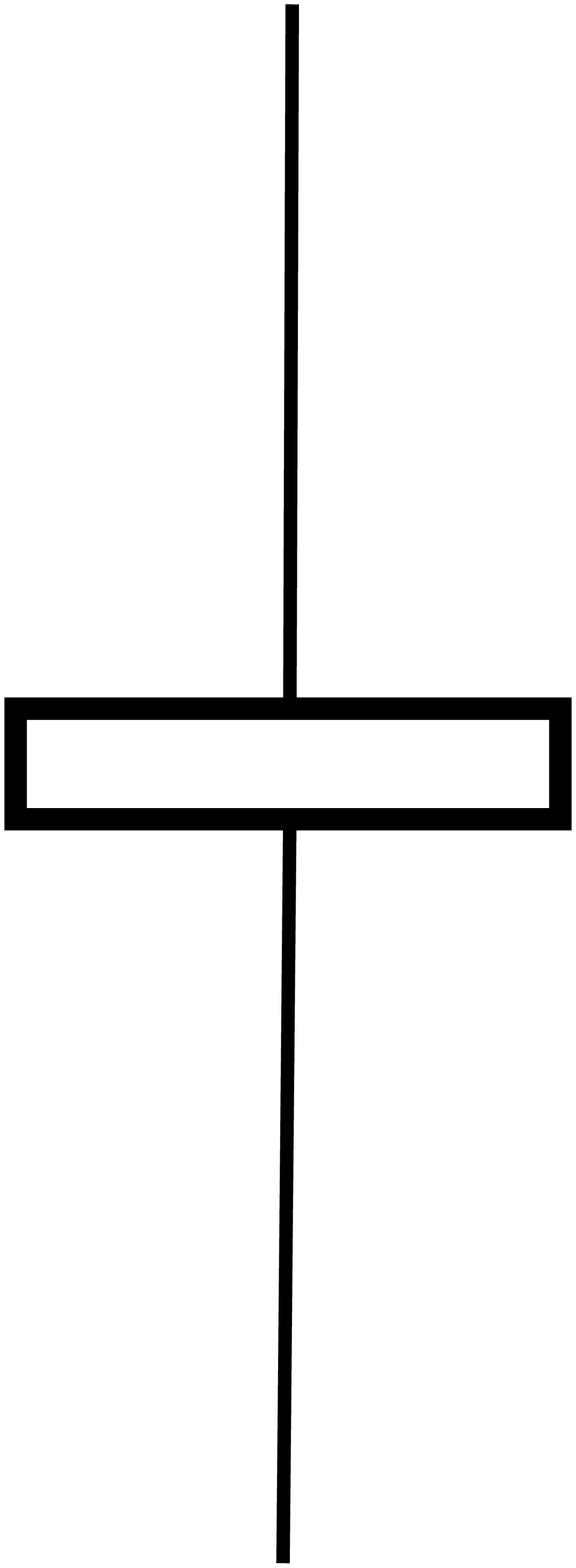}}
        \put(-60,80){\footnotesize{$n$}}
   \end{minipage}
    , \hspace{15 mm}
    \begin{minipage}[h]{0.09\linewidth}
        \vspace{0pt}
        \scalebox{0.115}{\includegraphics{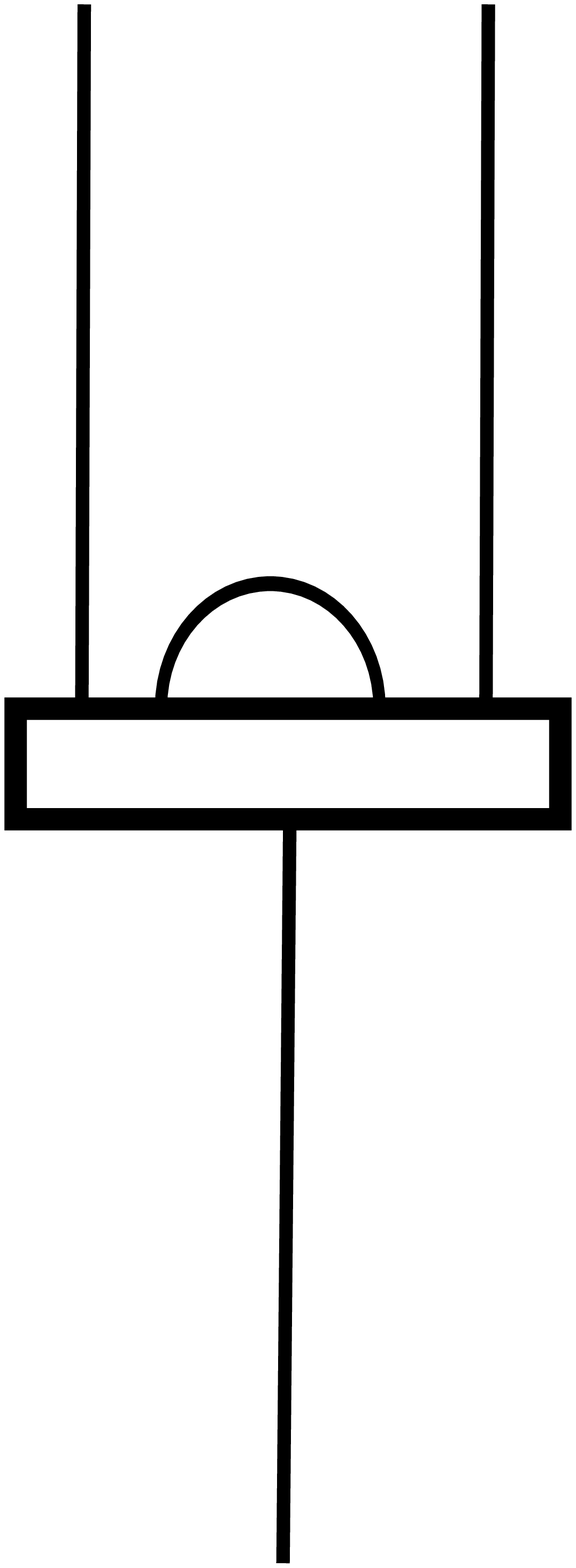}}
         \put(-70,+82){\footnotesize{$n-i-2$}}
         \put(-20,+64){\footnotesize{$1$}}
        \put(-2,+82){\footnotesize{$i$}}
        \put(-28,20){\footnotesize{$n$}}
   \end{minipage}
   =0,
   \label{AX}
  \end{eqnarray}
\begin{eqnarray}
\label{properties}
\hspace{0 mm}
\Delta_{n}=
  \begin{minipage}[h]{0.1\linewidth}
        \vspace{0pt}
        \scalebox{0.12}{\includegraphics{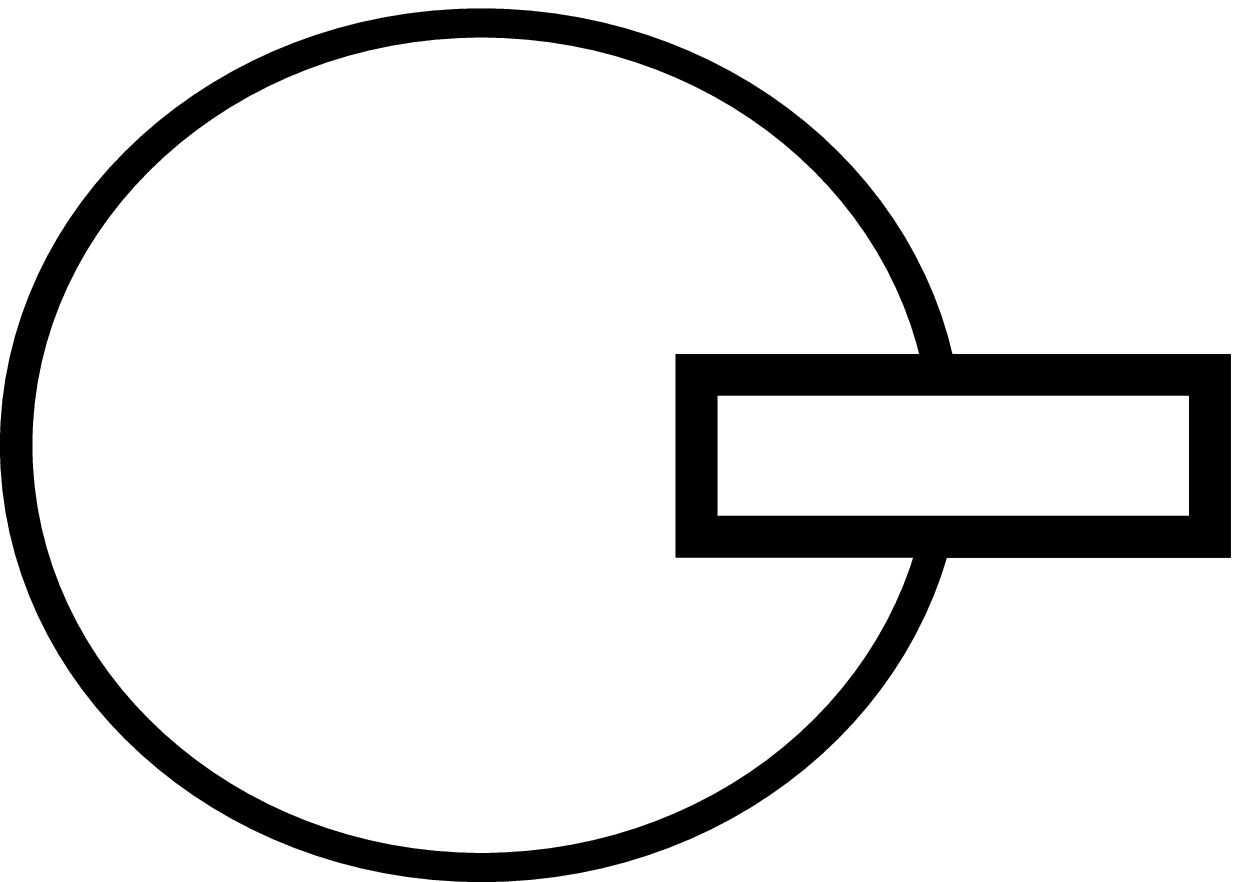}}
        \put(-29,+34){\footnotesize{$n$}}
   \end{minipage}
   , \hspace{14 mm}
     \begin{minipage}[h]{0.08\linewidth}
        \vspace{0pt}
        \scalebox{0.115}{\includegraphics{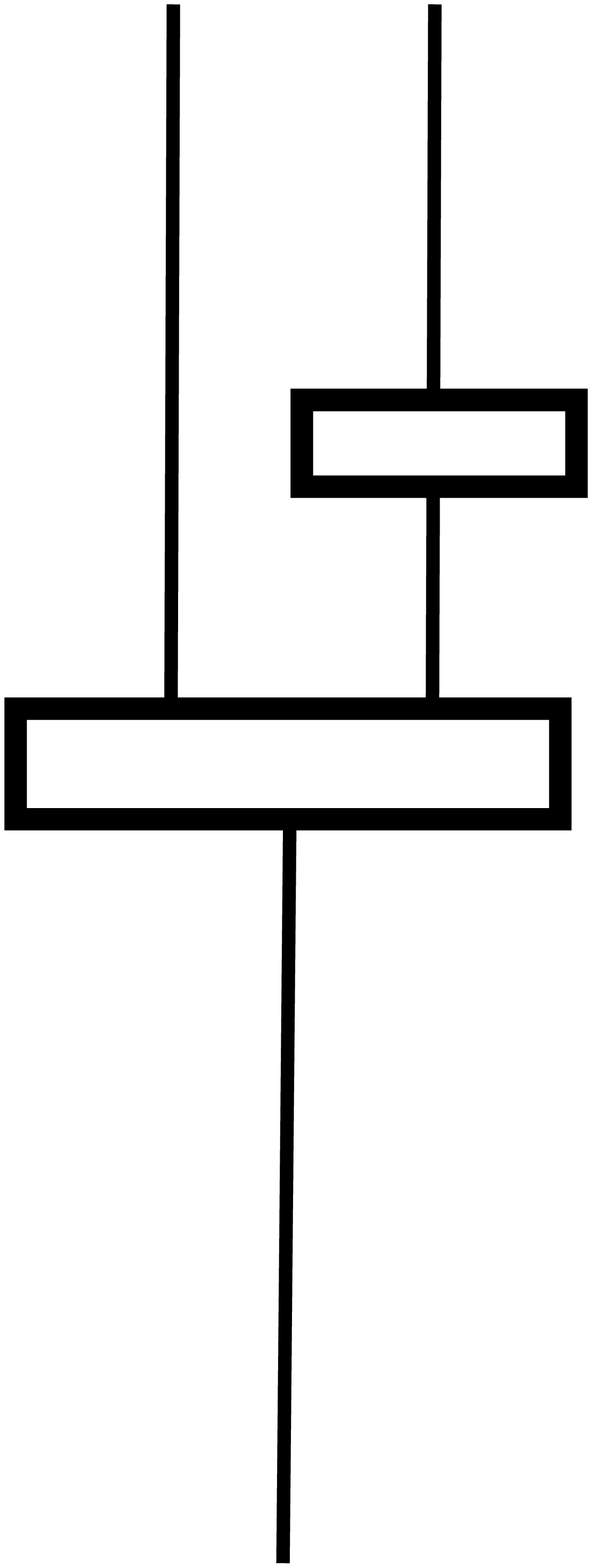}}
        \put(-34,+82){\footnotesize{$n$}}
        \put(-19,+82){\footnotesize{$m$}}
        \put(-46,20){\footnotesize{$m+n$}}
   \end{minipage}
  =
     \begin{minipage}[h]{0.09\linewidth}
        \vspace{0pt}
        \scalebox{0.115}{\includegraphics{idempotent2}}
        \put(-46,20){\footnotesize{$m+n$}}
   \end{minipage}
  \end{eqnarray}
and     
  \begin{eqnarray}
    \label{properties_2}
     \hspace{8 mm}
   \begin{minipage}[h]{0.08\linewidth}
        \vspace{0 pt}
        \scalebox{.15}{\includegraphics{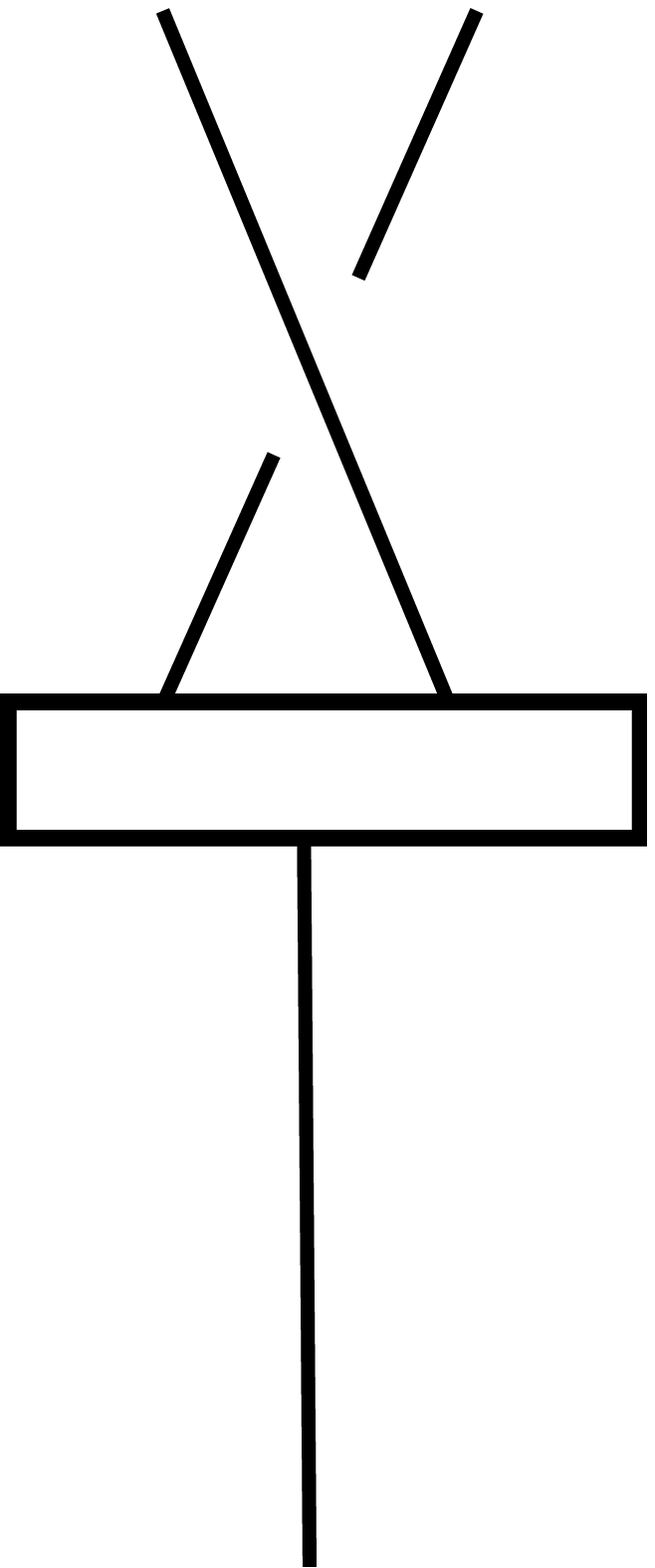}}
        \scriptsize{
        \put(-25,+72){$i$}
        \put(-5,+72){$j$}
        \put(-10,+10){$i+j$}
        }

   \end{minipage}   
   =
     A^{-ij}
  \begin{minipage}[h]{0.1\linewidth}
        \vspace{0pt}
        \scalebox{.15}{\includegraphics{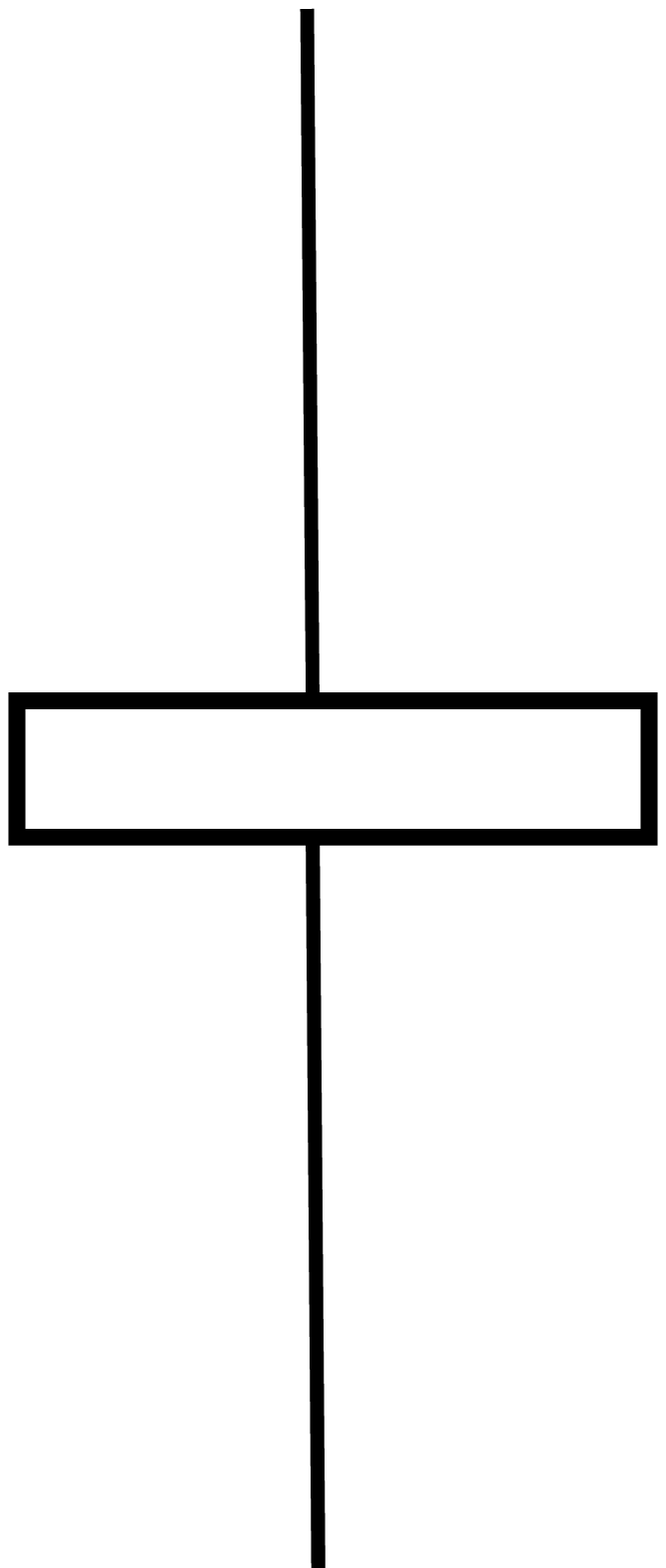}}
        \scriptsize{
          \put(-10,+10){$i+j$}
                  }
          \end{minipage}
           , \hspace{14 mm}
     \begin{minipage}[h]{0.08\linewidth}
        \vspace{-5pt}
        \scalebox{0.19}{\includegraphics{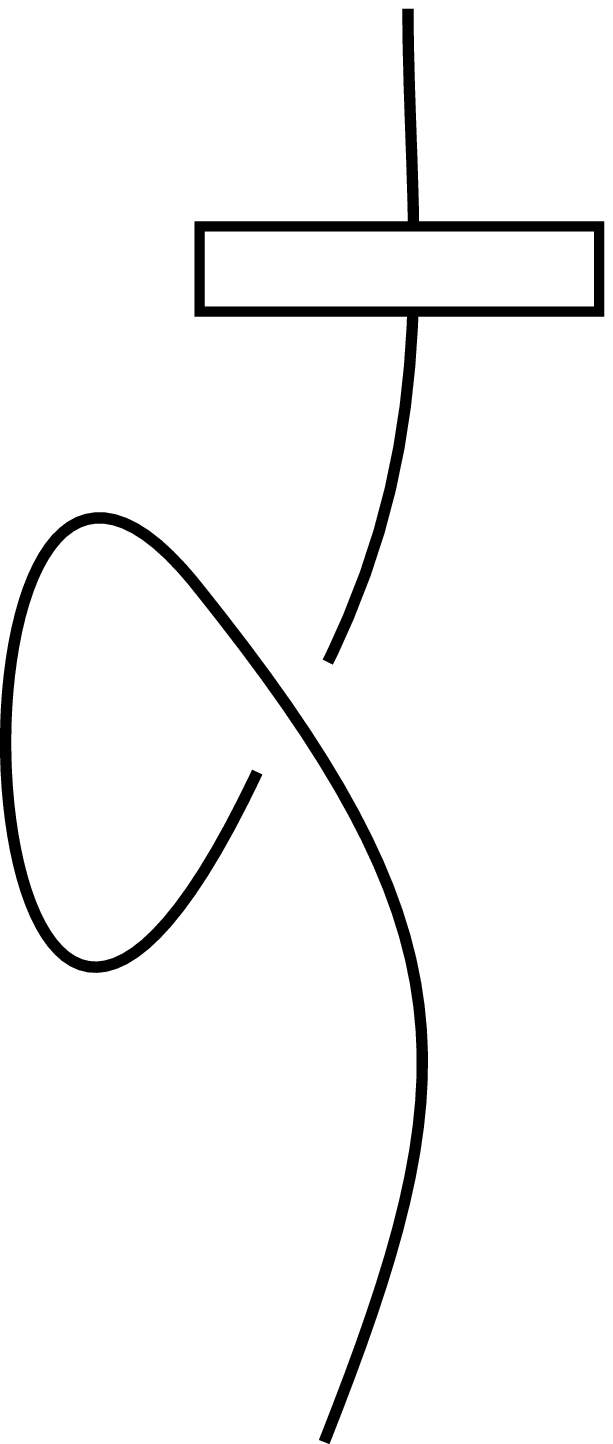}}
        \put(-22,+75){\footnotesize{$n$}}
   \end{minipage}
  =(-1)^nA^{-n^2-2n}
     \begin{minipage}[h]{0.09\linewidth}
        \hspace{5pt}
        \scalebox{0.19}{\includegraphics{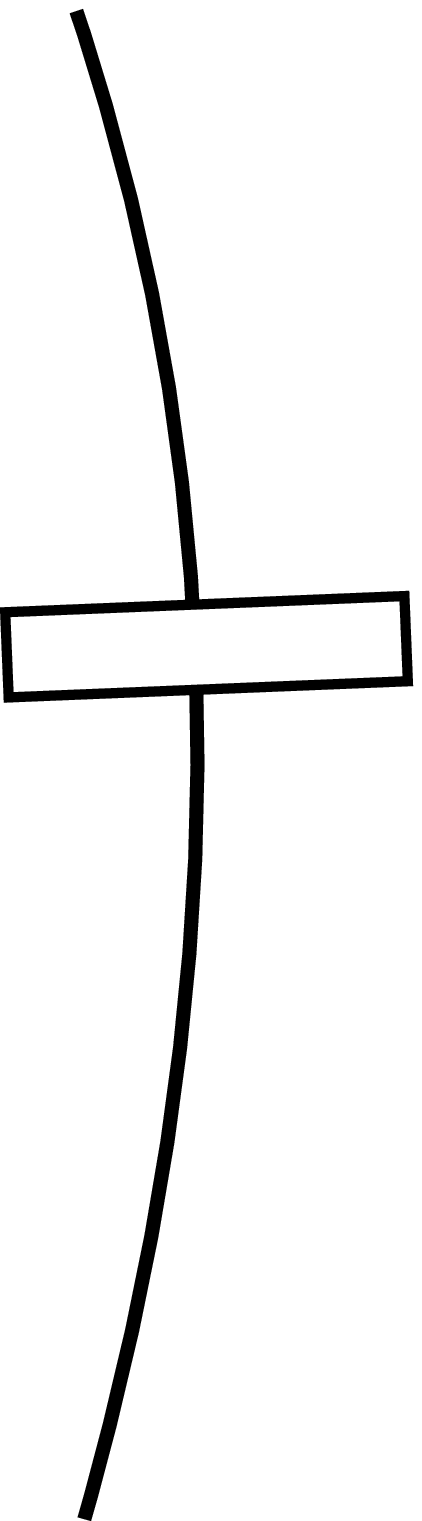}}
        \put(-6,55){\footnotesize{$n$}}
   \end{minipage}
   \end{eqnarray}
The definiton of the Jones-Wenzl projector is the main tool for our construction of the new singular knot invariants that we will introduce in section \ref{sec4}.
\subsection{The Colored Temperley-Lieb Algebra}
Let $m,n$ be two positive integers. Consider the skein module of $I\times I \times I$ with $2mn$ specified points on the boundary. More specifically, we put $mn$ marked points on the top and $mn$ points on the bottom.  Partition the set
of the $2 mn$ points on the boundary of the disk into $2m$ sets each one of them has $n$ points. At each cluster of $n$ points we place a Jones-Wenzl idempotent $f^{(n)}$. The skein module of $I \times I \times I$ with $2mn$ specified points on the boundary can be made into a unital associative algebra in a similar way as in the case of the Templely-Lieb algebra. In other words, if $A$ and $B$ are two diagrams in this algebra then $A \times B$ is defined as illustrated in Figure \ref{colored multiplication}.
\begin{figure}[H]
  \centering
   {\includegraphics[scale=0.5]{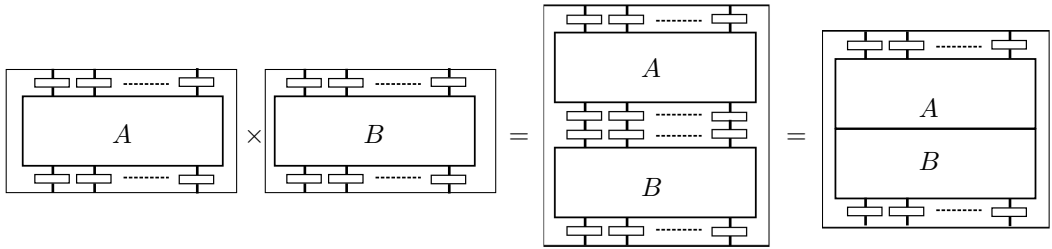}
   \small{
    \put(-355,40){$A$}
    \put(-305,40){$\times$}
    \put(-260,40){$B$}
    \put(-205,40){$=$}
    \put(-155,65){$A$}
    \put(-155,20){$B$}
    \put(-100,40){$=$}
      \put(-50,50){$A$}
    \put(-50,30){$B$}
         }
     \caption{Multiplication in the colored Templely-Lieb algebra.  }
  \label{colored multiplication}}
\end{figure}

 We will denote this algebra by $TL_n^m$.  The algebra $TL_{n}^m$ can be seen as the subalgebra of $TL_{mn}$ generated by all elements of the form $(f^{(m)})^{\otimes n} \otimes D\otimes (f^{(m)})^{\otimes n}$ where $D$ is a diagram that generates $TL_{mn}$. Using the properties of the Jones Wenzl idempotent, the skein module $TL_{n}^n$ is one dimensional generated by $f^{(n)}$. On the other hand, $TL_n^1$ is just the standard Templely-Lieb algebra $TL_n$.
\subsubsection{Braid group representations into the colored Temperley-Lieb algebra }
For every integer $m,n \geq 1$, the following map gives a representation of $B_n$ inside $TL_{n}^m$ :

\begin{eqnarray}
\label{rep}
 \hspace{-230 pt}\sigma_i=\begin{minipage}[h]{0.0\linewidth}
        \scalebox{0.5}{\includegraphics{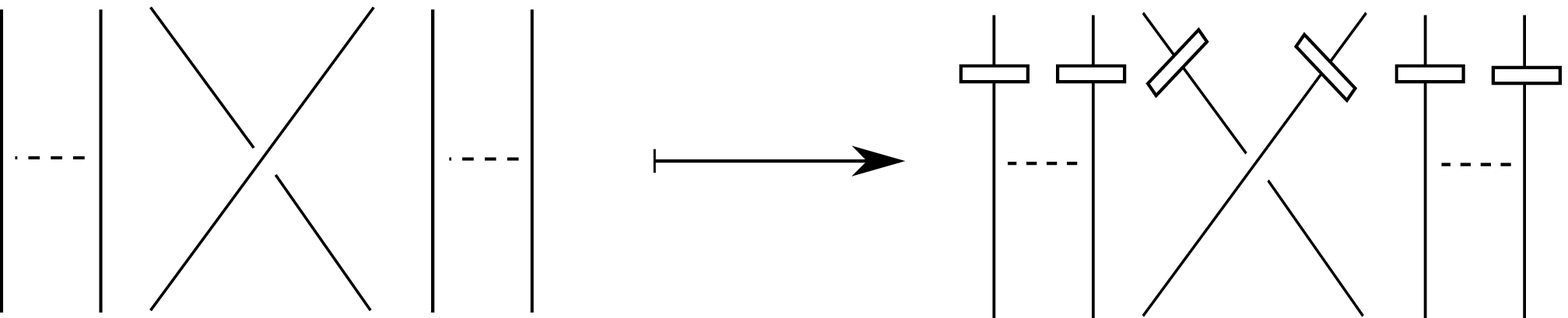}}
         \put(-160,40){$\rho_{m,n}$}
         \tiny{
         \put(-114,50){$m$}
         \put(-3,50){$m$}
         }
   \end{minipage}  
\end{eqnarray} 
The fact that Reidemeister moves $II$ and $III$ hold in the Kauffman bracket skein module implies that the map $\rho_{m,n}$ is indeed a representation. More precisely, the moves shown in Figure \ref{CRmoves} are basically a finite sequence of the usual Reidemeister moves $II$ and $III$ applied on each single strand and summand of the idempotents.
\begin{figure}[H]
  \centering
   {\includegraphics[scale=0.15]{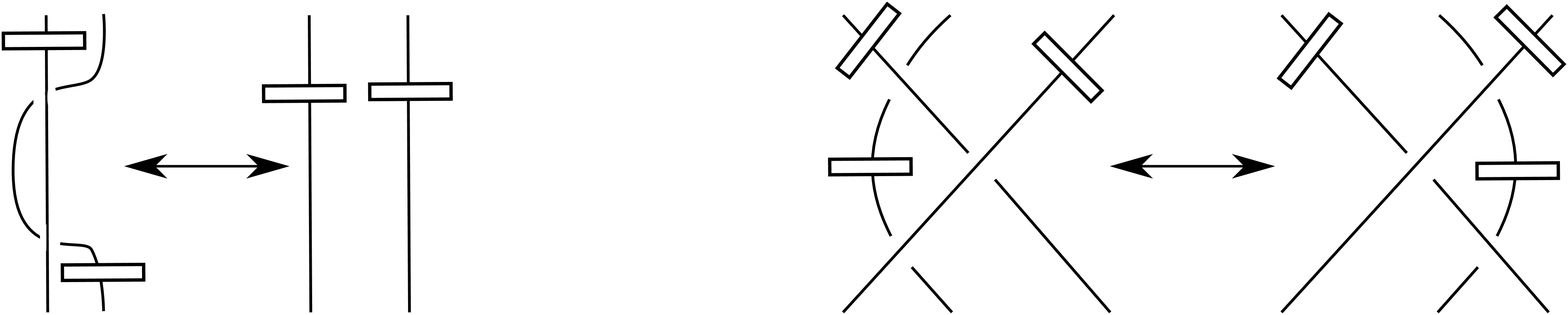}
     \caption{Reidemeister moves hold on strands colored with the Jones-Wenzl projector.}
  \label{CRmoves}}
\end{figure}
In section \ref{sec5} we will extend the representation $\rho_{m,n}$ to a representation $\hat{\rho}_{m,n}$ of singular braid monoid into the colored Temperley-Lieb algebra. 
\section{The Kauffman-Vogel Polynomial for Rigid 4-Valent Graphs}
\label{sec3}
A \textit{rigid $4$-valent graph} on $n$ components is the image of a smooth immersion of $n$ circles in $S^3$ that has finitely many double points, called vertices. Rigid $4$-valent graphs are also called sometimes singular knots. Similarly, the vertices are sometimes called singularities. Two rigid $4$-valent graphs are ambient isotopic if there is an orientation preserving self-homeomorphism of $S^3$ that takes one graph to the other and preserves a small rigid disk around each vertex. We will deal with graph diagrams, which are projections of the graph in the plane such that the information at each crossing is preserved by leaving a little break in the lower strand. Two rigid $4$-valent graphs $G_1$ and $G_2$ are ambient isotopic if and only if one can obtain a diagram of $G_2$ from a diagram of $G_1$ by a finite sequence of classical and singular Reidemeister moves as in Figure \ref{Rmoves}. See \cite{kaufgraph} for more details. 

\begin{figure}[H]
  \centering
   {\includegraphics[scale=0.1]{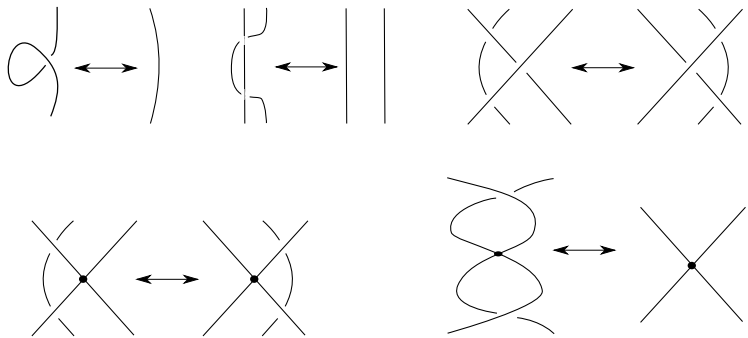}
     \caption{Classical Reidemeister moves $RI$, $RII$ and $RIII$ on the top and singular Reidemeister $RIV$ and $RV$ on the bottom.}
  \label{Rmoves}}
\end{figure}
If one does not allow the move on the top left of Figure $\ref{Rmoves}$ in the sequence, then we obtain what is called  {\it regular isotopy} of rigid $4$-valent graphs.

As we mentioned in the introduction, a one variable version of the Kauffman-Vogel polynomial invariant can be obtained using the Jones-Wenzl projector \cite{KauffLin}. We recall this version here. For a $4$-valent rigid vertex embedded graph $G$, we will refer to this polynomial by $[G]_2$.   

\begin{definition}\label{defG2}
The polynomial $[G]_2$ is defined recursively via the following five axioms:
\begin{enumerate}
\item

$   \left[
   \begin{minipage}[h]{0.08\linewidth}
        \vspace{0pt}
        \scalebox{0.15}{\includegraphics{pos_crossing}}
   \end{minipage}
  \right]_2  = A^4  \left[
   \begin{minipage}[h]{0.08\linewidth}
        \vspace{0pt}
        \scalebox{0.15}{\includegraphics{other_smoothing}}
   \end{minipage}\right]_2
  + A^{-4}\left[
   \begin{minipage}[h]{0.07\linewidth}
        \vspace{0pt}
        \scalebox{0.15}{\includegraphics{one_smoothing}}
   \end{minipage}
   \right]_2+(A^2+A^{-2})  \left[
   \begin{minipage}[h]{0.08\linewidth}
        \vspace{0pt}
        \scalebox{0.08}{\includegraphics{singular}}
   \end{minipage}
  \right]_2$
\item
$ \left[ \begin{minipage}[h]{0.035\linewidth}
        \vspace{0pt}
        \scalebox{0.25}{\includegraphics{R_1}}
   \end{minipage}  \right]_2= A^{8}  \left[ \begin{minipage}[h]{0.035\linewidth}
        \vspace{0pt}
        \scalebox{0.25}{\includegraphics{R_1P}}
   \end{minipage} \right]_2 $ and $\left[ \begin{minipage}[h]{0.035\linewidth}
        \vspace{0pt}
        \scalebox{0.25}{\includegraphics{R_1M}}
   \end{minipage}  \right]_2= A^{-8}  \left[ \begin{minipage}[h]{0.035\linewidth}
        \vspace{0pt}
        \scalebox{0.25}{\includegraphics{R_1P}}
   \end{minipage} \right]_2 $. 
\item
\vspace{3pt}
$ \left[ \begin{minipage}[h]{0.05\linewidth}
        \vspace{0pt}
        \scalebox{0.02}{\includegraphics{simple-circle}}
   \end{minipage}  \right]_2= 2 +A^{-4} + A^4$
 \item
\vspace{3pt}  
 $ \left[
   \begin{minipage}[h]{0.08\linewidth}
        \vspace{0pt}
        \scalebox{0.08}{\includegraphics{singular}}
   \end{minipage}
  \right]_2=\begin{minipage}[h]{0.08\linewidth}
        \vspace{0pt}
        \scalebox{0.08}{\includegraphics{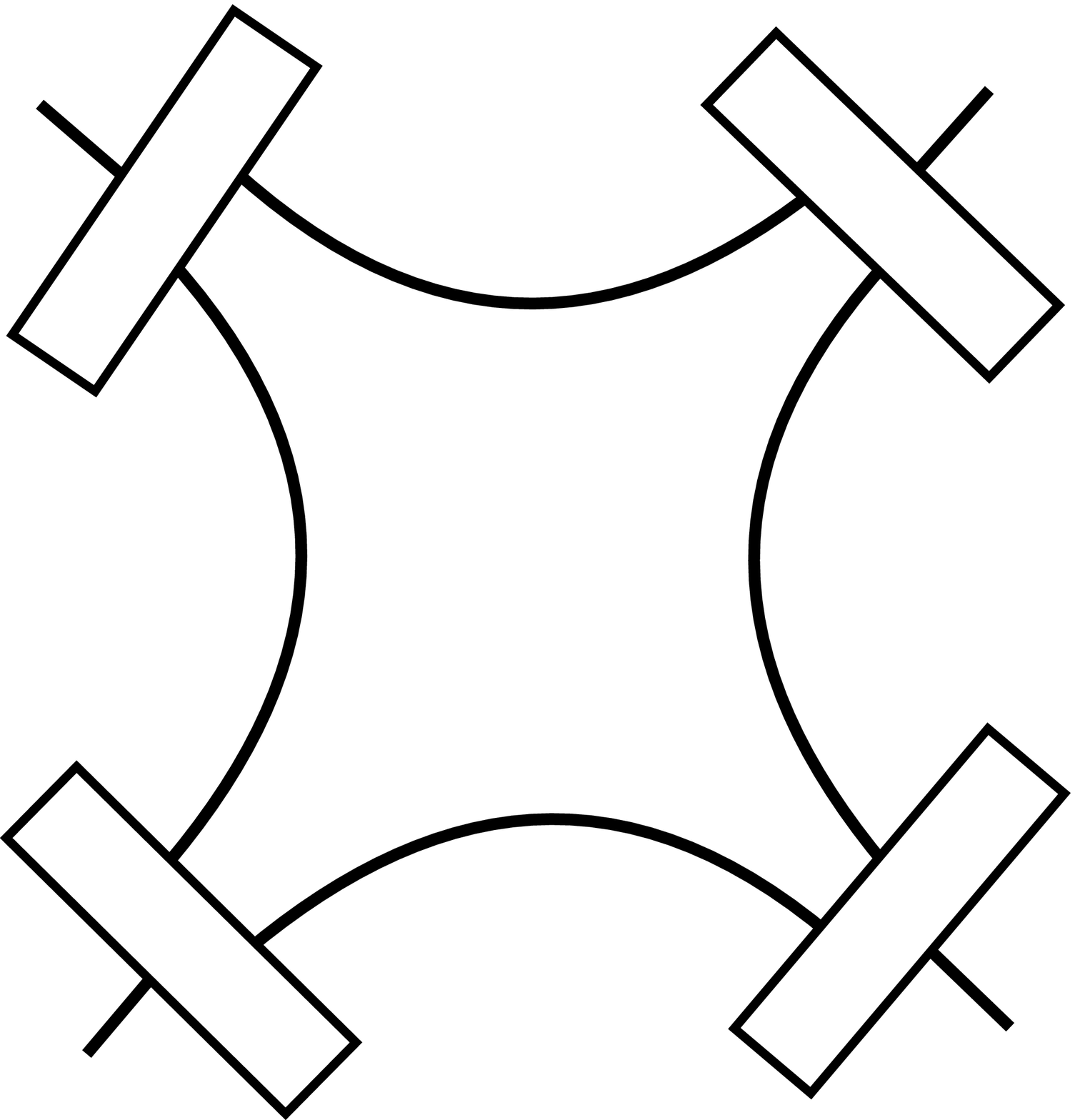}}
        \tiny{
        \put(-28,40){$1$}
        \put(-28,5){$1$}
        \put(-12,20){$1$}
        \put(-40,20){$1$}
        \put(-1,45){$2$}
        \put(-48,45){$2$}
        \put(-1,0){$2$}
        \put(-48,0){$2$}}\label{identity4}
   \end{minipage}$ 
 \item
\vspace{3pt}  
 $ \left[
   \begin{minipage}[h]{0.08\linewidth}
        \vspace{0pt}
        \scalebox{0.08}{\includegraphics{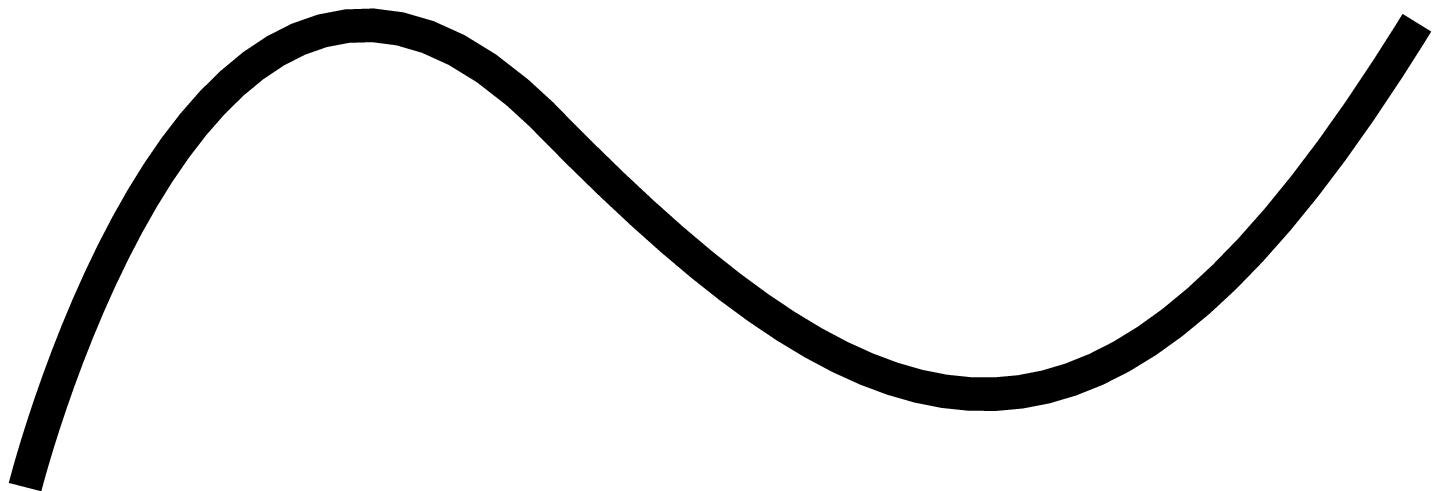}}
   \end{minipage}
  \right]_2=\begin{minipage}[h]{0.08\linewidth}
        \vspace{0pt}
        \scalebox{0.1}{\includegraphics{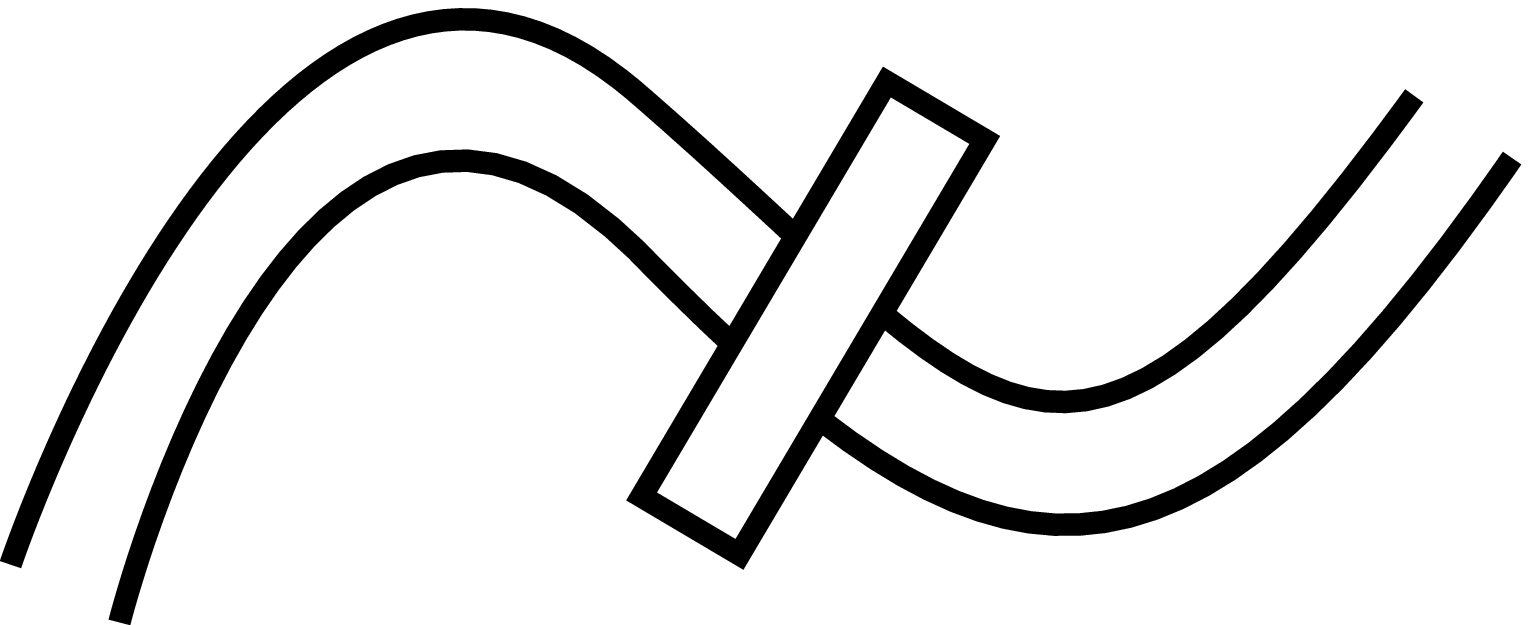}}
   \end{minipage}$     
  
\end{enumerate}	
\end{definition}
 In the next section we rewrite the first axiom in a slightly different way which helps us in our generalization of this invariant.	
\section{Colored Kauffman-Vogel Polynomial for Rigid 4-Valent Graphs}
\label{sec4}
In this section we give a generalization for the one-variable specialization of the Kauffman-Vogel polynomial given in the previous section. This invariant can also be seen as an extension for the colored Jones polynomial to $4$-valent graph. The one variable specialization of the Kauffman-Vogel polynomial that we gave in the previous section can be defined via the following rules: 
\begin{enumerate}
\item

$   \left[
   \begin{minipage}[h]{0.08\linewidth}
        \vspace{0pt}
        \scalebox{0.15}{\includegraphics{pos_crossing}}
   \end{minipage}
  \right]_2  = \begin{minipage}[h]{0.08\linewidth}
        \vspace{0pt}
        \scalebox{0.08}{\includegraphics{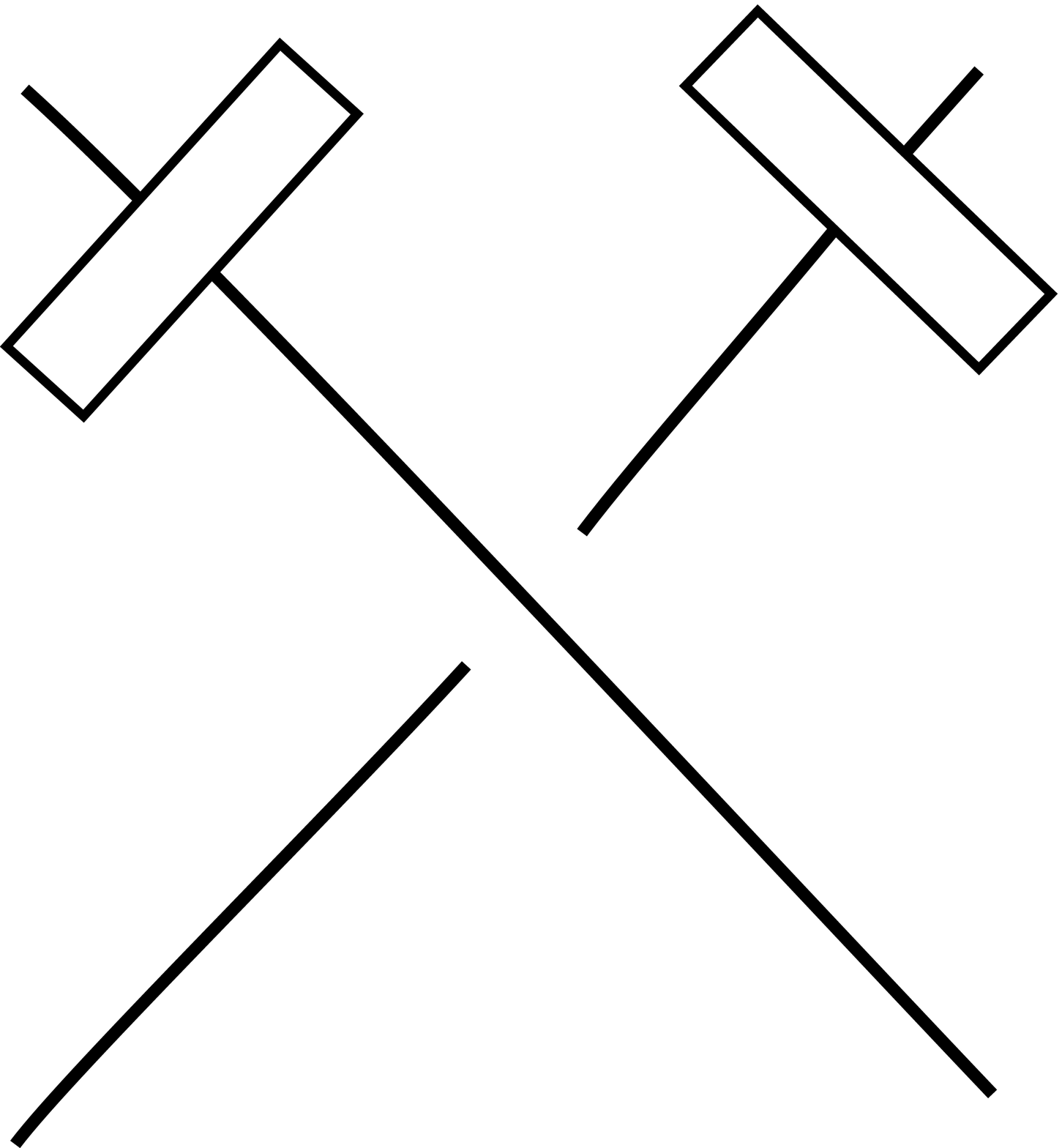}}
        \tiny{
        \put(-1,45){$2$}
        \put(-48,45){$2$}}
   \end{minipage}$
 \item
\vspace{3pt}  
 $ \left[
   \begin{minipage}[h]{0.08\linewidth}
        \vspace{0pt}
        \scalebox{0.08}{\includegraphics{singular}}
   \end{minipage}
  \right]_2=\begin{minipage}[h]{0.08\linewidth}
        \vspace{0pt}
        \scalebox{0.08}{\includegraphics{singular_map}}
        \tiny{
        \put(-28,40){$1$}
        \put(-28,5){$1$}
        \put(-12,20){$1$}
        \put(-40,20){$1$}
        \put(-1,45){$2$}
        \put(-48,45){$2$}
        \put(-1,0){$2$}
        \put(-48,0){$2$}}
   \end{minipage}$ 
 \item
\vspace{3pt}  
 $ \left[
   \begin{minipage}[h]{0.08\linewidth}
        \vspace{0pt}
        \scalebox{0.08}{\includegraphics{strand}}
   \end{minipage}
  \right]_2=\begin{minipage}[h]{0.08\linewidth}
        \vspace{0pt}
        \scalebox{0.1}{\includegraphics{strand_idm}}
   \end{minipage}$     
  
\end{enumerate}		
Replacing the five axioms in the Definition \ref{defG2} by the three axioms given above follows from the following facts

\begin{equation}\label{n=1}  \begin{minipage}[h]{0.08\linewidth}
        \vspace{0pt}
        \scalebox{0.08}{\includegraphics{colored_corssing}}
        \tiny{
        \put(-1,45){$2$}
        \put(-48,45){$2$}}
   \end{minipage}  
    = A^4 
   \begin{minipage}[h]{0.1\linewidth}
        \vspace{0pt}
        \scalebox{0.19}{\includegraphics{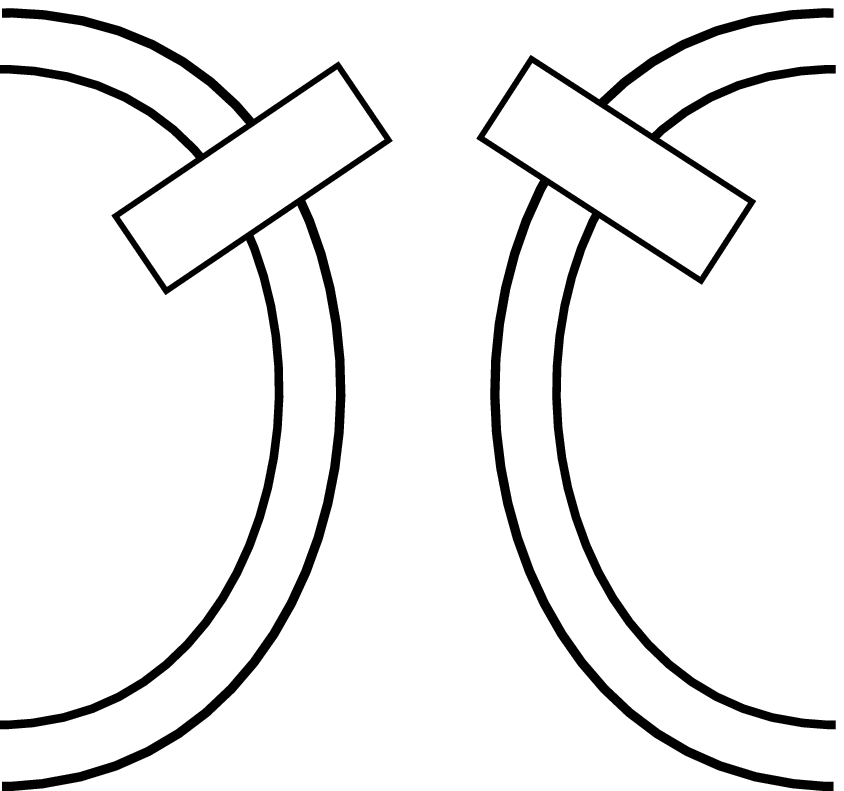}}
   \end{minipage}
  + A^{-4}
   \begin{minipage}[h]{0.1\linewidth}
        \vspace{0pt}
        \scalebox{0.19}{\includegraphics{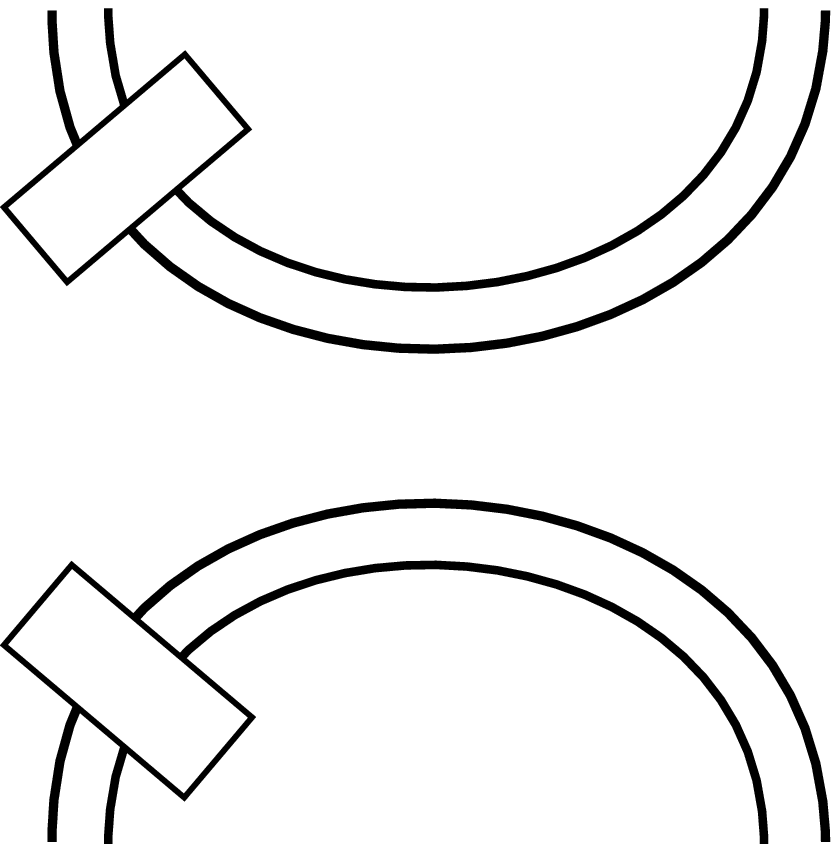}}
   \end{minipage}
   +(A^2+A^{-2})  
   \begin{minipage}[h]{0.08\linewidth}
        \vspace{0pt}
        \scalebox{0.08}{\includegraphics{singular_map}}
        \tiny{
        \put(-28,40){$1$}
        \put(-28,5){$1$}
        \put(-12,20){$1$}
        \put(-40,20){$1$}
        \put(-1,45){$2$}
        \put(-48,45){$2$}
        \put(-1,0){$2$}
        \put(-48,0){$2$}}
   \end{minipage}
\end{equation}
and
$$
 \begin{minipage}[h]{0.1\linewidth}
        \vspace{0pt}
        \scalebox{0.12}{\includegraphics{deltan}}
        \put(-29,+34){\footnotesize{$2$}}
   \end{minipage}=1+A^{-4}+A^4.
$$
Before we give our generalization for $[G]_2$, we prove the following two lemmas.

\begin{lemma}
\label{technical lemma}
\begin{enumerate}
For $n \geq 1$ the following identities holds :
\item 
\begin{eqnarray}
   \begin{minipage}[h]{0.15\linewidth}
        \vspace{0 pt}
        \scalebox{.15}{\includegraphics{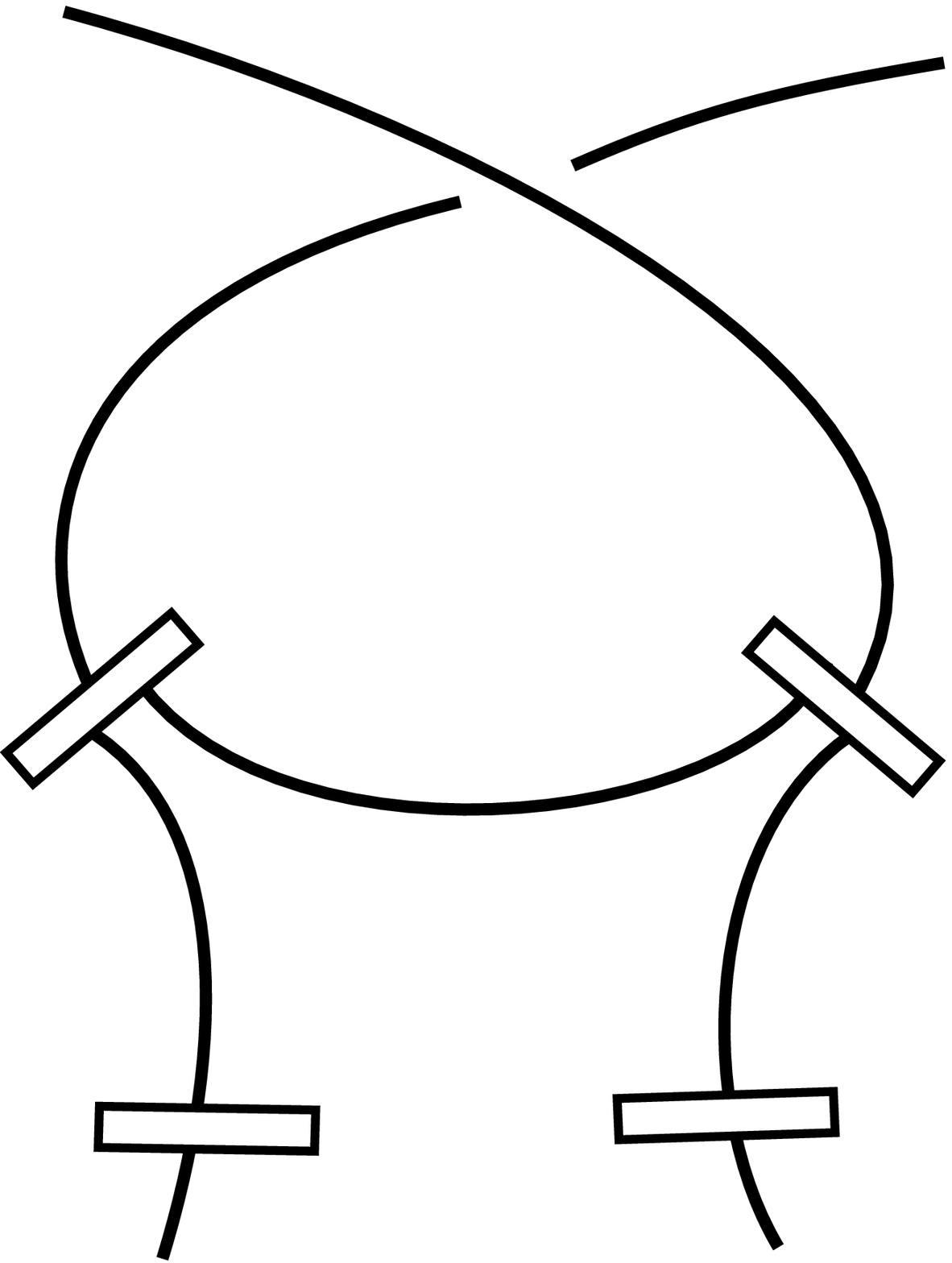}}
        \scriptsize{
         \put(-65,+52){$2n$}
          \put(-1,+52){$2n$}}
   \end{minipage}
   =(-1)^n A^{3n^2+2n}
  \begin{minipage}[h]{0.15\linewidth}
        \vspace{0pt}
        \scalebox{.18}{\includegraphics{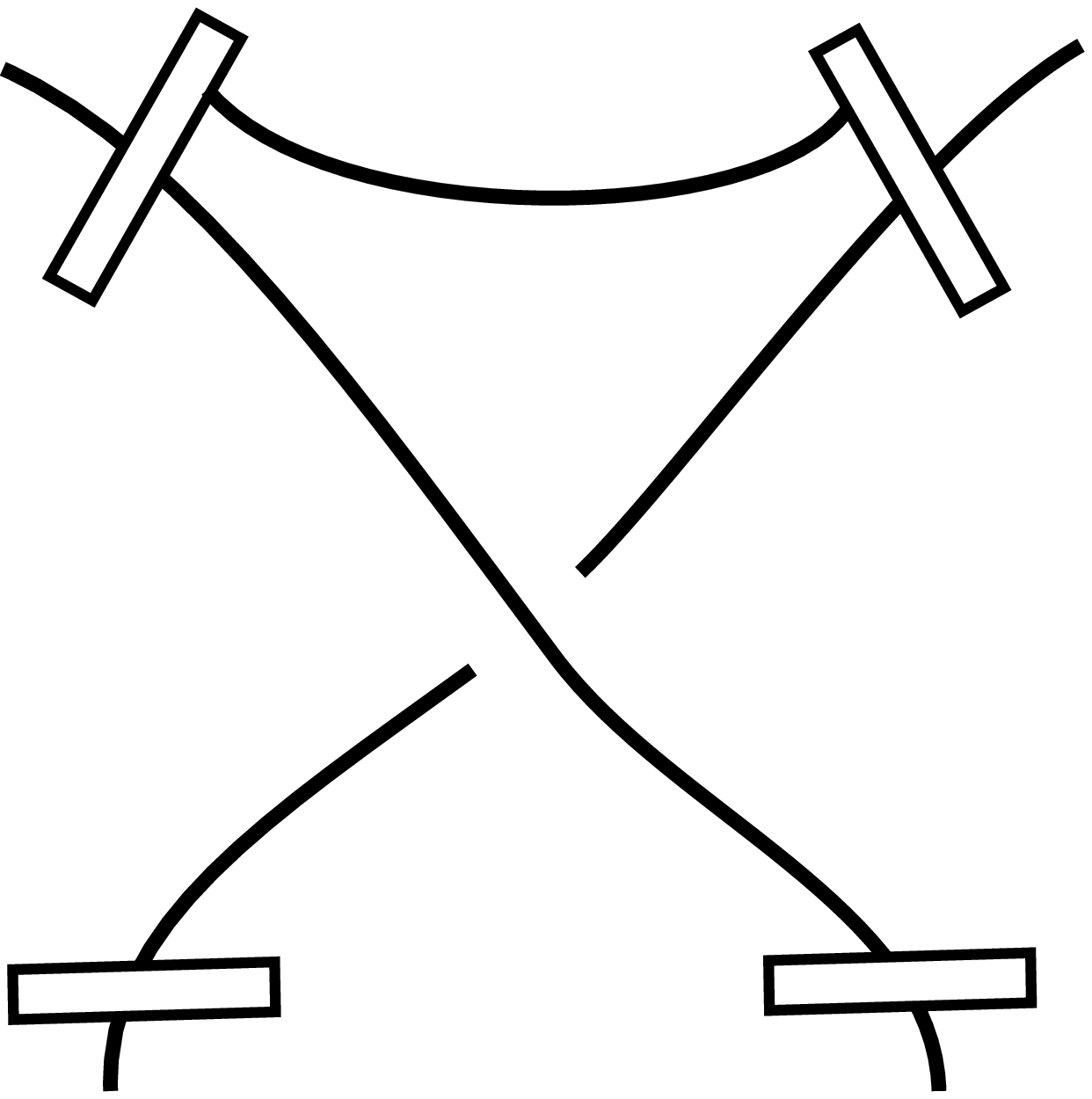}}
        \scriptsize{
        \put(-68,+76){$2n$}
          \put(0,+75){$2n$}
          \put(-58,26){$n$}
          \put(-22,26){$n$}
         \put(-43,+56){$n$}
           }
          \end{minipage}
  \end{eqnarray}
\item 
\begin{eqnarray}
\begin{minipage}[h]{0.15\linewidth}
\vspace{0 pt}
\scalebox{.15}{\includegraphics{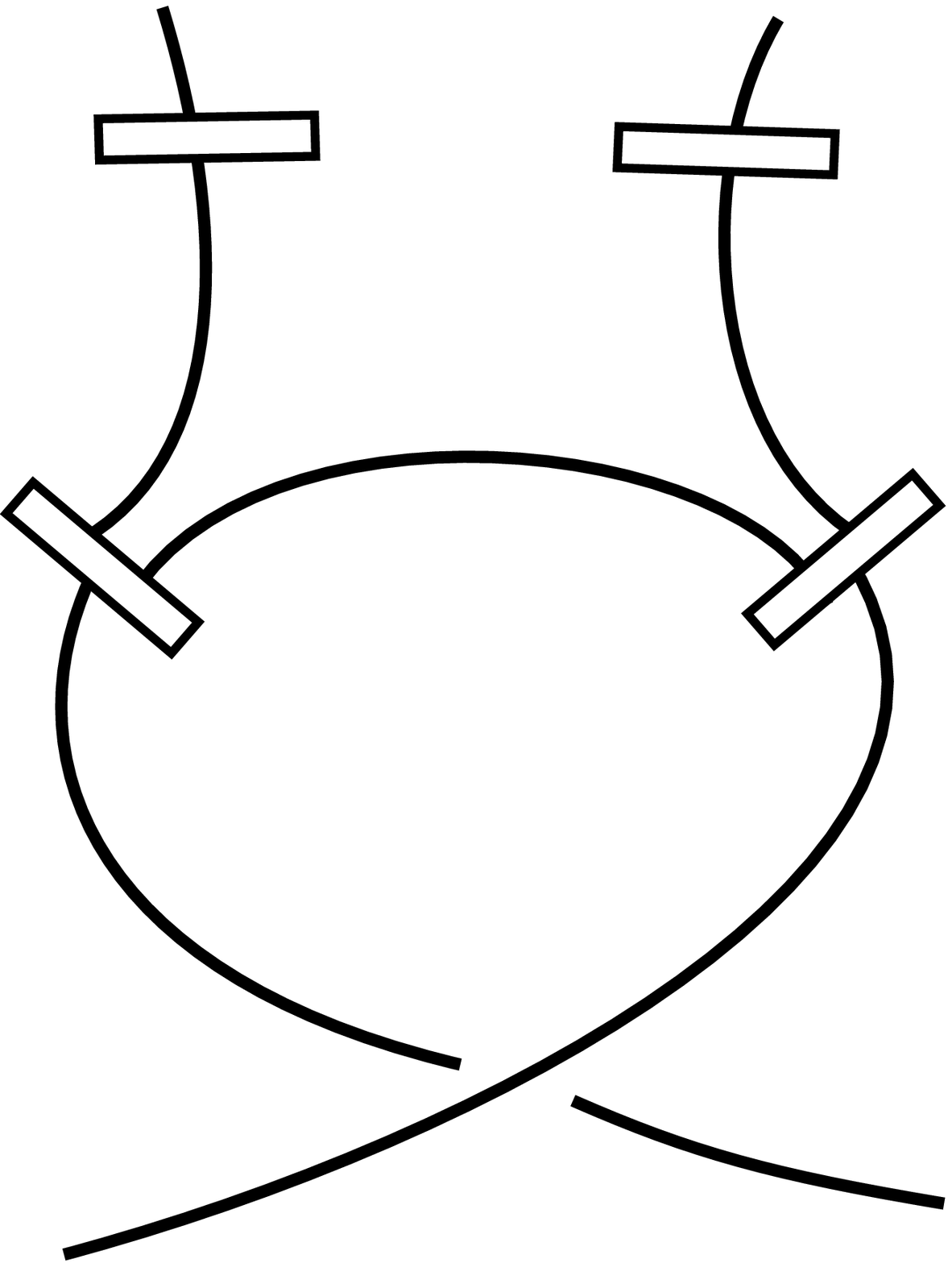}}
\scriptsize{
	\put(-65,+52){$2n$}
	\put(-1,+52){$2n$}}
\end{minipage}
=(-1)^n A^{-3n^2-2n}
\begin{minipage}[h]{0.15\linewidth}
\vspace{0pt}
\scalebox{.18}{\includegraphics{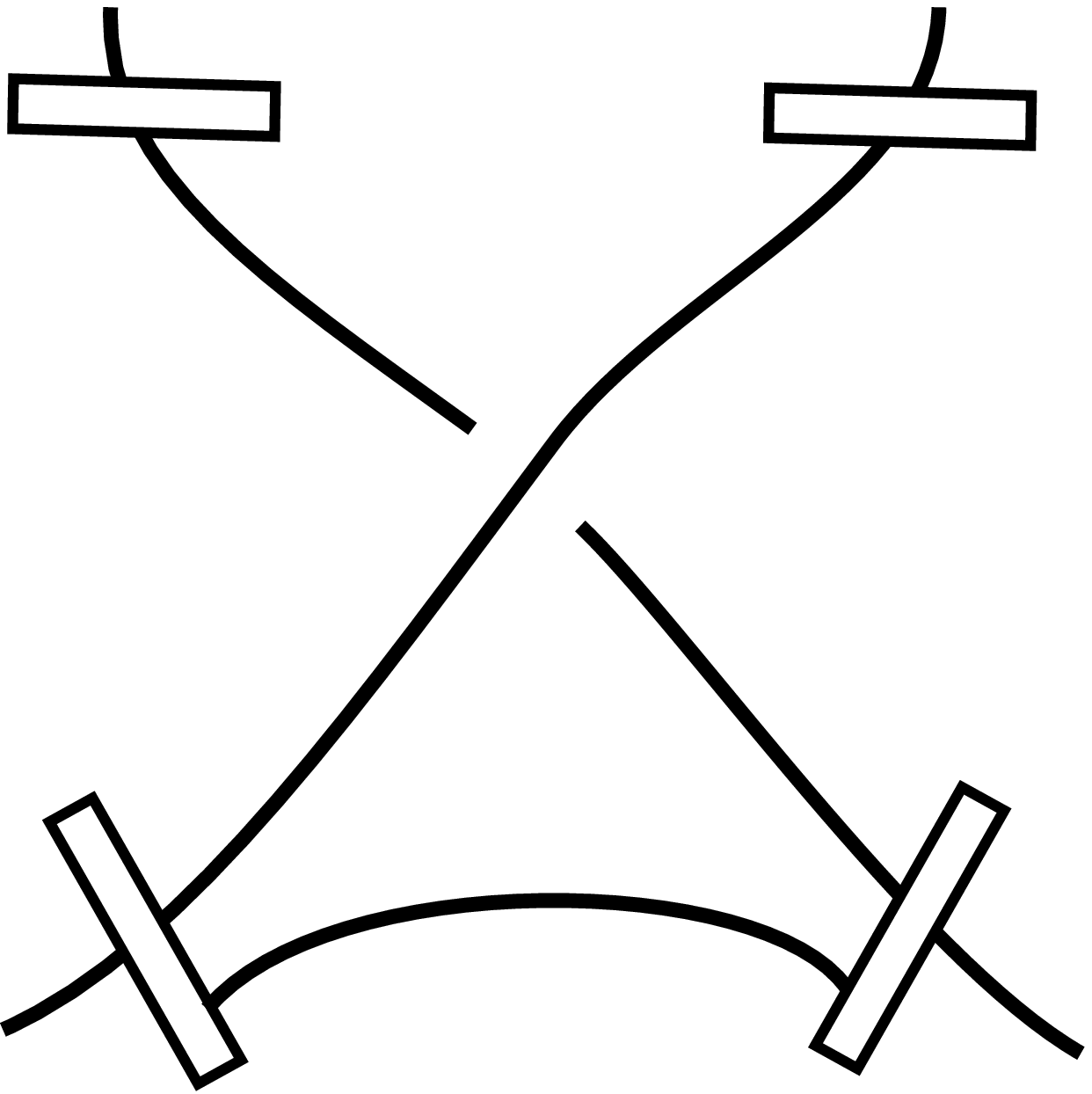}}
\scriptsize{
	\put(-68,-3){$2n$}
	\put(0,-3){$2n$}
	\put(-58,26){$n$}
	\put(-22,26){$n$}
	\put(-43,+56){$n$}
}
\end{minipage}
\end{eqnarray}
\end{enumerate}
\end{lemma}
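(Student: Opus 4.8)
The plan is to reduce both identities to the two elementary skein relations recorded in equation \eqref{properties_2}: the \emph{reordering} rule, which says that a crossing of an $i$-cable and a $j$-cable that is capped off by a single idempotent $f^{(i+j)}$ equals $A^{-ij}$ times the straight $f^{(i+j)}$, and the \emph{curl} (Reidemeister-$I$) rule, which evaluates a positive curl on an $n$-colored strand as $(-1)^{n}A^{-n^{2}-2n}$ times the straight $f^{(n)}$; both are listed in the displayed identities above, and I will use them together with their mirror images (obtained by sending $A\mapsto A^{-1}$). Since part (2) is the vertical mirror of part (1), its left- and right-hand diagrams being the reflections of those in part (1), I would prove part (1) in full and then obtain part (2) by reflecting every crossing, which replaces $A$ by $A^{-1}$ throughout and leaves the sign $(-1)^{n}$ unchanged.

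First I would split each $2n$-colored strand appearing in the left-hand diagram into a pair of $n$-colored strands. This is legitimate because of the absorption property of the Jones--Wenzl idempotents stated above, namely $f^{(2n)}\bigl(f^{(n)}\otimes f^{(n)}\bigr)=f^{(2n)}$, so two idempotents $f^{(n)}$ may be inserted inside every $f^{(2n)}$ without changing the element represented in the skein module. After these insertions the single $2n$-over-$2n$ configuration on the left becomes a configuration built out of $n$-colored strands, which I would isotope into three separated elementary pieces: two ``block'' crossings, each of which sends a bundle of $n$ strands over another bundle of $n$ strands whose ends both terminate on a common $f^{(2n)}$, and one curl carried by a single $n$-colored strand.

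I would then evaluate the three pieces independently. Each of the two block crossings is handled by the reordering rule with $i=j=n$: because both $n$-bundles are capped by the same $f^{(2n)}$, every turn-back produced in the Kauffman-bracket expansion is annihilated by the idempotent, and the crossing collapses to a scalar $A^{n^{2}}$ for part (1), respectively $A^{-n^{2}}$ for part (2). The remaining curl is evaluated by the Reidemeister-$I$ rule, contributing $(-1)^{n}A^{n^{2}+2n}$ for part (1) and $(-1)^{n}A^{-n^{2}-2n}$ for part (2). Multiplying the three factors and using the arithmetic $n^{2}+n^{2}+(n^{2}+2n)=3n^{2}+2n$ gives the coefficient $(-1)^{n}A^{3n^{2}+2n}$ claimed in part (1), while the leftover diagram is exactly the untwisted network on the right-hand side, with its central $n$-strand coming from the recombination of the split strands through the two $f^{(2n)}$ idempotents.

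The main obstacle is the bookkeeping in the splitting step: I must verify that inserting $f^{(n)}\otimes f^{(n)}$ inside each $f^{(2n)}$ and isotoping genuinely separates the $2n$-over-$2n$ configuration into exactly two clean $n$-bundle crossings plus a single $n$-curl, with no residual crossings left between strands that do \emph{not} share a capping idempotent; otherwise the turn-back--killing argument underlying the reordering rule would not apply and extra terms would survive. Once the diagram is put in this separated form, the remainder is a direct application of the two stated identities together with the exponent count above.
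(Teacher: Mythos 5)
Your proposal is correct and follows essentially the same route as the paper's proof: split the $2n$-colored strands into $n$-bundles via idempotent absorption, isotope (using colored Reidemeister II/III), then evaluate one colored curl contributing $(-1)^nA^{n^2+2n}$ and the bundle crossings contributing $A^{2n^2}$ via the reordering rule, with part (2) obtained as the mirror image under $A\mapsto A^{-1}$. The exponent bookkeeping $n^2+n^2+(n^2+2n)=3n^2+2n$ matches the paper exactly.
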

\begin{proof}
By isotopy we have
\vspace{10 pt}
\begin{eqnarray}
\label{GOO}
   \begin{minipage}[h]{0.15\linewidth}
        \hspace{5 pt}
        \scalebox{.15}{\includegraphics{proof_1.eps}}
        \scriptsize{
         \put(-62,+52){$2n$}
          \put(-7,+52){$2n$}}
   \end{minipage}
   =
  \begin{minipage}[h]{0.15\linewidth}
        \vspace{0pt}
        \scalebox{.18}{\includegraphics{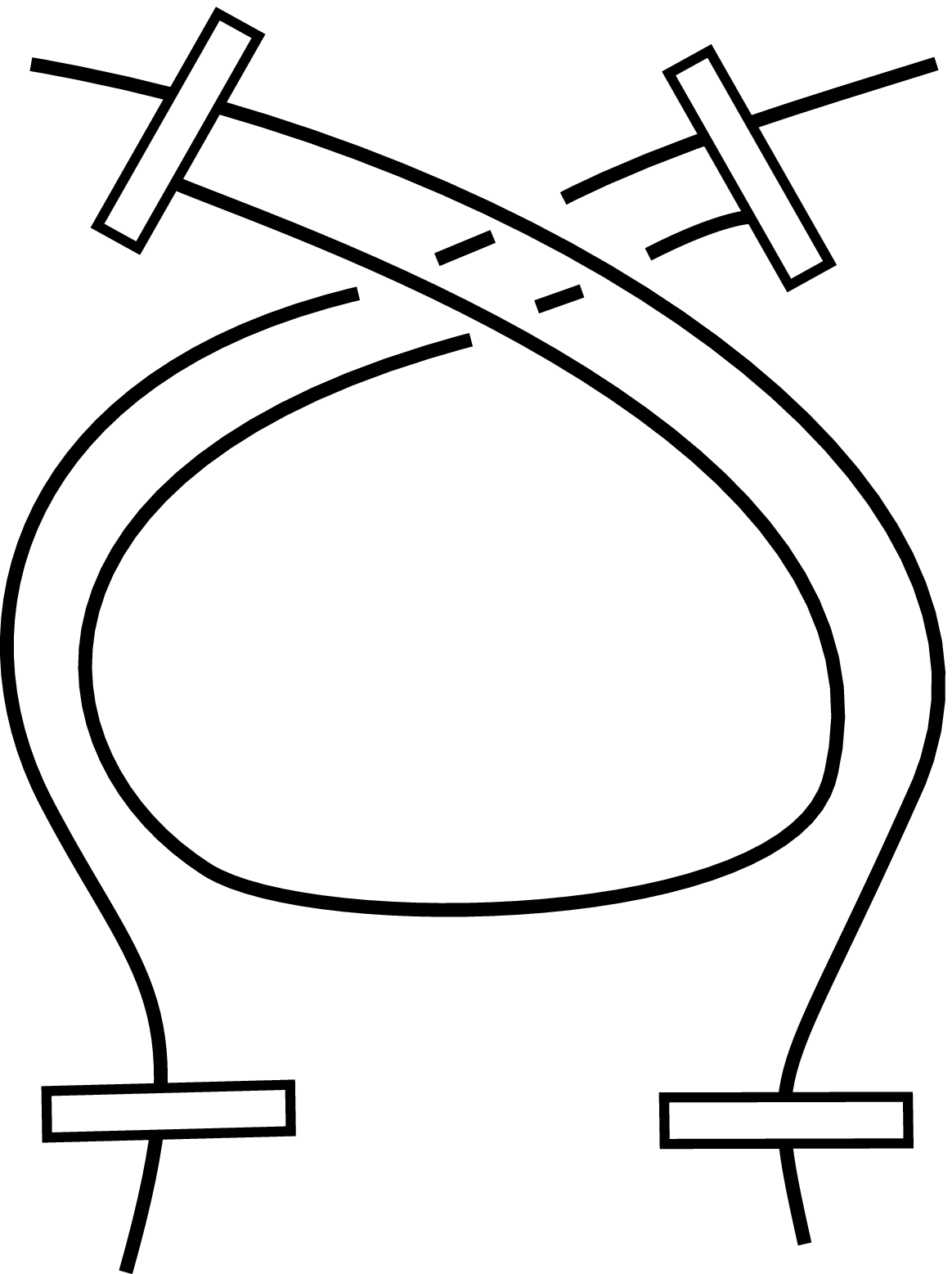}}
        \scriptsize{
          \put(-60,13){$n$}
          \put(-5,13){$n$}
           \put(-35,+17){$n$}
           }
          \end{minipage}
  \end{eqnarray}

Using property $(2)$ in \ref{properties_2}  we obtain: 
\vspace{38pt}
\begin{eqnarray*}
\vspace{10 pt}
   \begin{minipage}[h]{0.15\linewidth}
        \vspace{-18pt}
        \scalebox{.18}{\includegraphics{proof_2.eps}}
        \scriptsize{
        	\put(-67,66){$2n$}
        	\put(0,66){$2n$}
        \put(-60,13){$n$}
          \put(-5,13){$n$}
           }
          \end{minipage}
   =(-1)^n A^{n^2+2n}
  \begin{minipage}[h]{0.15\linewidth}
        \vspace{0pt}
        \scalebox{.16}{\includegraphics{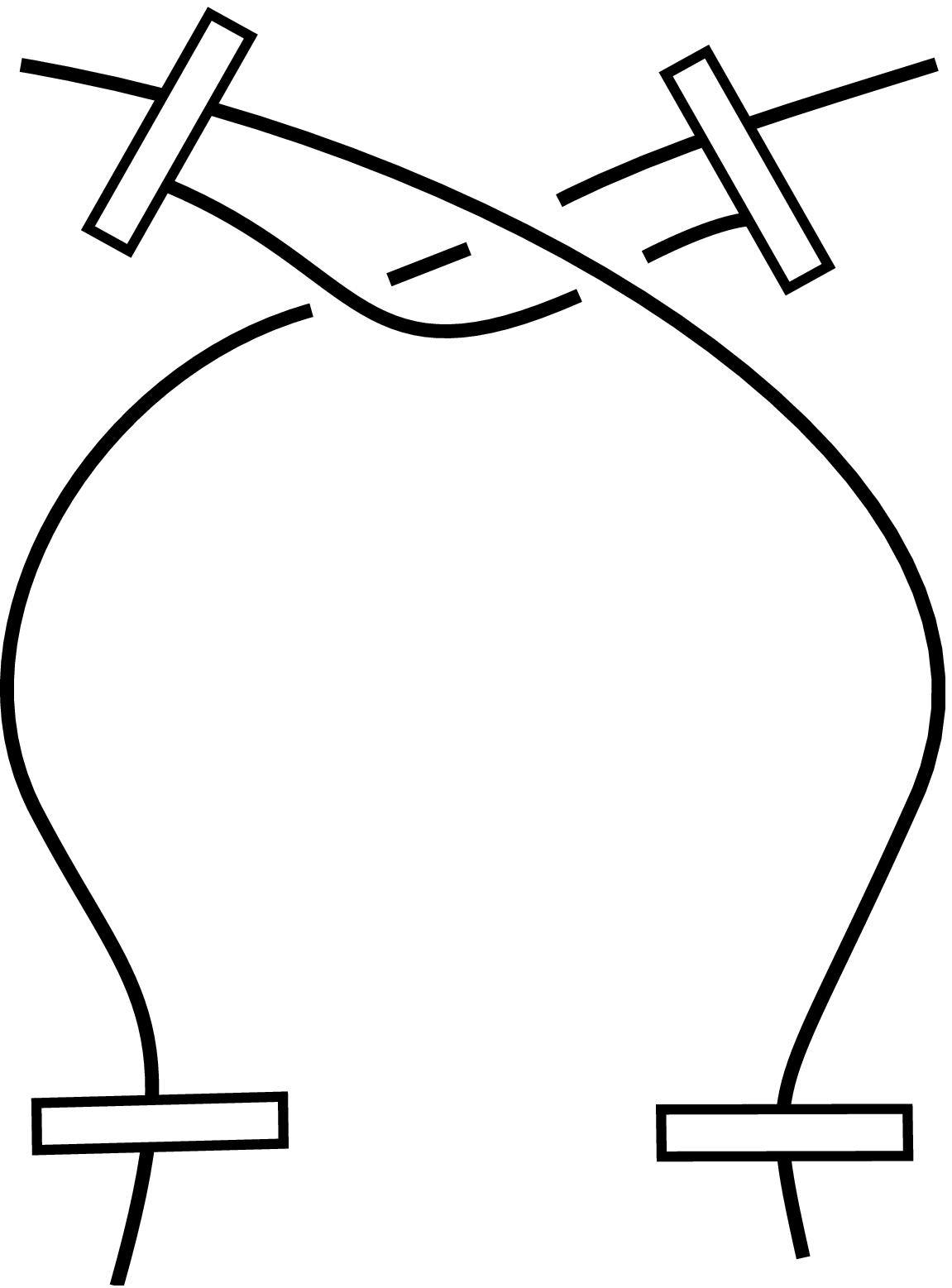}}
        \scriptsize{
        	\put(-60,63){$2n$}
        	\put(0,63){$2n$}
          \put(-60,13){$n$}
          \put(-5,13){$n$}
           }
          \end{minipage}
  \end{eqnarray*}  
The fact that one can do Reidemeister moves $II$ and $III$ for strands colored by the Jones-Wenzl projector implies : 
\begin{eqnarray*}
   \begin{minipage}[h]{0.15\linewidth}
        \vspace{-19 pt}
        \scalebox{.15}{\includegraphics{proof_3_.eps}}
        \scriptsize{
        	\put(-60,63){$2n$}
        	\put(0,63){$2n$}
           \put(-60,13){$n$}
          \put(-5,13){$n$}
          }
   \end{minipage}
   =
  \begin{minipage}[h]{0.15\linewidth}
        \vspace{0pt}
        \scalebox{.16}{\includegraphics{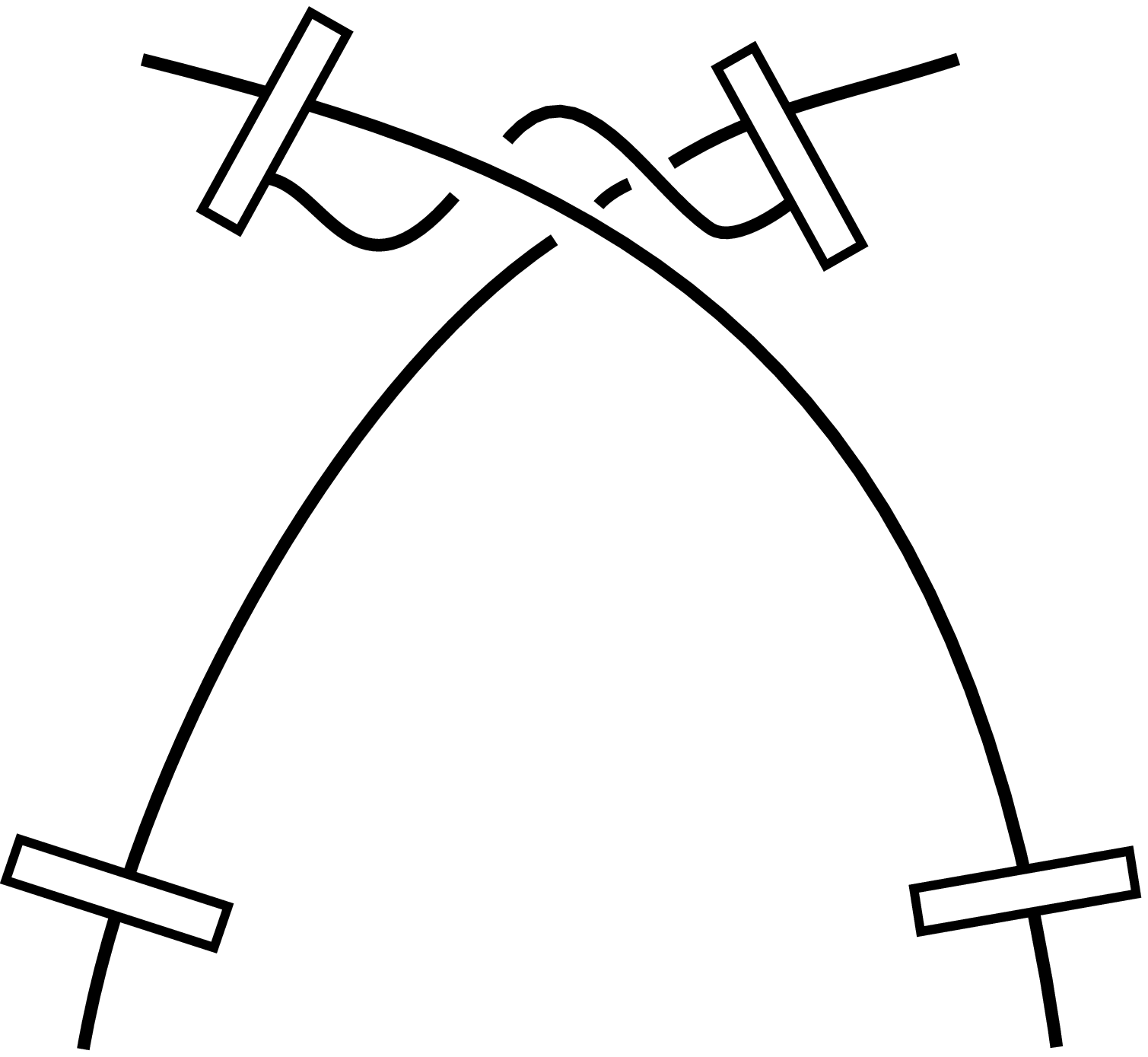}}
        \scriptsize{
        	\put(-68,83){$2n$}
        	\put(-10,83){$2n$}
           \put(-70,45){$n$}
          \put(-5,45){$n$}
           }
          \end{minipage}
  \end{eqnarray*}  
Finally, using property $(1)$ in \ref{properties_2}, one has:

\begin{eqnarray*}
   \begin{minipage}[h]{0.15\linewidth}
        \vspace{18pt}
        \scalebox{.16}{\includegraphics{proof_3.eps}}
        \scriptsize{
          \put(-70,45){$n$}
          \put(-5,45){$n$}
           }
          \end{minipage}
   =A^{2n^2}
  \begin{minipage}[h]{0.15\linewidth}
        \vspace{0pt}
        \scalebox{.18}{\includegraphics{proof_5.eps}}
        \scriptsize{
        \put(-68,+76){$2n$}
          \put(0,+75){$2n$}
          \put(-58,30){$n$}
          \put(-22,30){$n$}
         \put(-43,+56){$n$}
           }
          \end{minipage}
  \end{eqnarray*}  
 The result follows. 
\end{proof}

\begin{lemma}
\label{move V}
Let $n \geq 1 $. The following identity holds in the Temperley-Lieb algebra $TL_{4n}$:
\begin{eqnarray*}
   \begin{minipage}[h]{0.12\linewidth}
        \vspace{0pt}
        \scalebox{.13}{\includegraphics{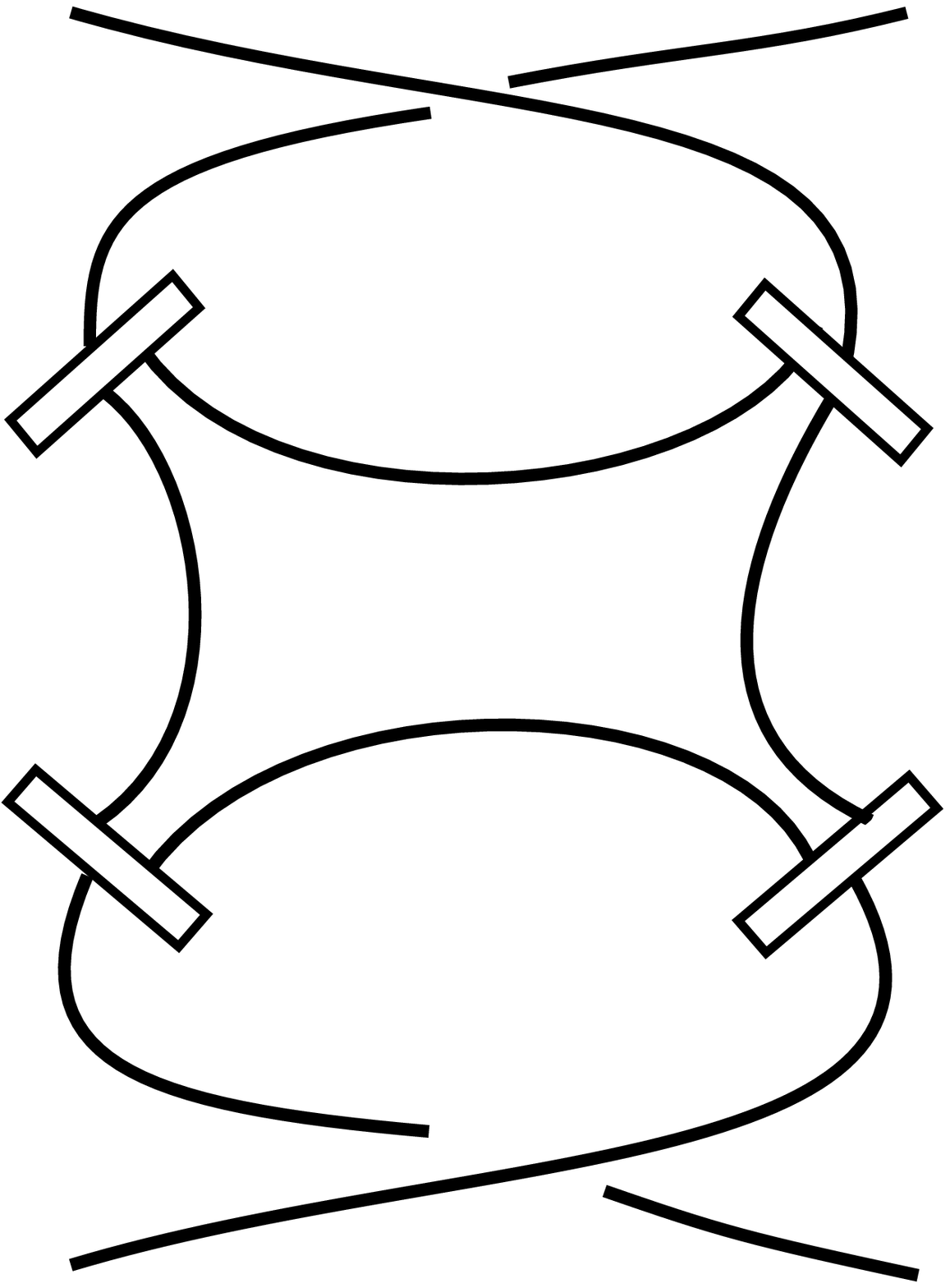}}
        \scriptsize{
          \put(-56,65){$2n$}
          \put(-5,65){$2n$}
          \put(-56,0){$2n$}
          \put(0,0){$2n$}
           }
          \end{minipage}=
  \begin{minipage}[h]{0.08\linewidth}
        \vspace{0pt}
        \scalebox{0.08}{\includegraphics{singular_map}}
        \tiny{
        \put(-28,40){$n$}
        \put(-28,5){$n$}
        \put(-12,20){$n$}
        \put(-40,20){$n$}
        \put(-1,45){$2n$}
        \put(-48,45){$2n$}
        \put(-1,-3){$2n$}
        \put(-48,-3){$2n$}}
   \end{minipage}
  \end{eqnarray*}  
\end{lemma}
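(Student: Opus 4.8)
The plan is to reduce the left-hand diagram to the singular map by exposing two oppositely-signed twists inside it and invoking Lemma \ref{technical lemma} once for each. First I would split the four $2n$-colored external legs into pairs of $n$-colored strands: by the absorption identity recorded in \eqref{properties} each idempotent $f^{(2n)}$ absorbs $f^{(n)}\otimes f^{(n)}$, so inserting $f^{(n)}\otimes f^{(n)}$ just inside every $2n$-colored leg does not change the element of $TL_{4n}$. After this refinement the central part of the diagram splits, up to planar isotopy of the kind already used in \eqref{GOO}, into an upper local configuration and a lower local configuration, each of which matches the left-hand side of one of the two identities in Lemma \ref{technical lemma}.

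The key step is then to apply Lemma \ref{technical lemma} to these two pieces. The upper (positively twisted) piece is replaced by its split form with the scalar $(-1)^n A^{3n^2+2n}$, and the lower (negatively twisted) piece by its split form with the scalar $(-1)^n A^{-3n^2-2n}$. These two scalars are mutually inverse, since their product is $(-1)^{2n}A^{(3n^2+2n)-(3n^2+2n)}=1$. Thus no scalar survives, which is exactly what the statement demands, as the singular map on the right-hand side is carried with coefficient $1$.

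It remains to recognize that the diagram obtained after these two substitutions simplifies to the singular map, that is, to the inner square colored by $n$ with the four outer legs colored by $2n$. Here I would use that Reidemeister moves $II$ and $III$ hold for strands colored by the Jones--Wenzl projector (Figure \ref{CRmoves}) to straighten the $n$-colored strands produced by the two applications of Lemma \ref{technical lemma}, and then use idempotency \eqref{AX} together with the absorption identity \eqref{properties} to merge the now-repeated projectors back into a single $f^{(2n)}$ on each corner and a single $f^{(n)}$ on each internal edge. Once the redundant projectors are absorbed, the internal $n$-colored edges coming from the two pieces glue together into the four-edge square, reproducing the inner square of the singular map exactly as in the base relation \eqref{n=1}.

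The main obstacle I anticipate is the diagrammatic bookkeeping in this last step. One must check that the two split diagrams glue along their shared $n$-colored edges with the projectors in the correct positions, so that the absorption and idempotency relations apply cleanly and leave neither residual turnbacks nor lower-order correction terms, and that the middle strands close up into precisely the square of the singular map rather than some isotopic but differently-projected trivalent graph. By contrast, the scalar cancellation and the two applications of Lemma \ref{technical lemma} are routine once the local pictures have been matched to the lemma, so the entire difficulty is concentrated in verifying this final gluing.
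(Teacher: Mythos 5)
Your proposal follows essentially the same route as the paper's proof: both apply Lemma \ref{technical lemma} once to each of the two oppositely-signed twist regions, observe that the scalars $(-1)^{n}A^{3n^2+2n}$ and $(-1)^{-n}A^{-3n^2-2n}$ cancel, and then identify the resulting diagram with the singular map using Reidemeister moves on projector-colored strands. The paper's version is merely terser: it performs the two applications sequentially and finishes with a single Reidemeister II move, without needing your preliminary splitting of the $2n$-colored legs, since the required $n$-colored projectors are already present in its diagrams.
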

\begin{proof}
The previous lemma implies:
\begin{eqnarray*}
    \begin{minipage}[h]{0.12\linewidth}
        \vspace{0pt}
        \scalebox{.13}{\includegraphics{proof_1f.eps}}
        \scriptsize{
          \put(-56,65){$2n$}
          \put(-5,65){$2n$}
          \put(-56,0){$2n$}
          \put(0,0){$2n$}
           }
          \end{minipage}
  &=& (-1)^{-n} A^{-3n^2-2n}\quad\begin{minipage}[h]{0.12\linewidth}
        \vspace{0pt}
        \scalebox{.13}{\includegraphics{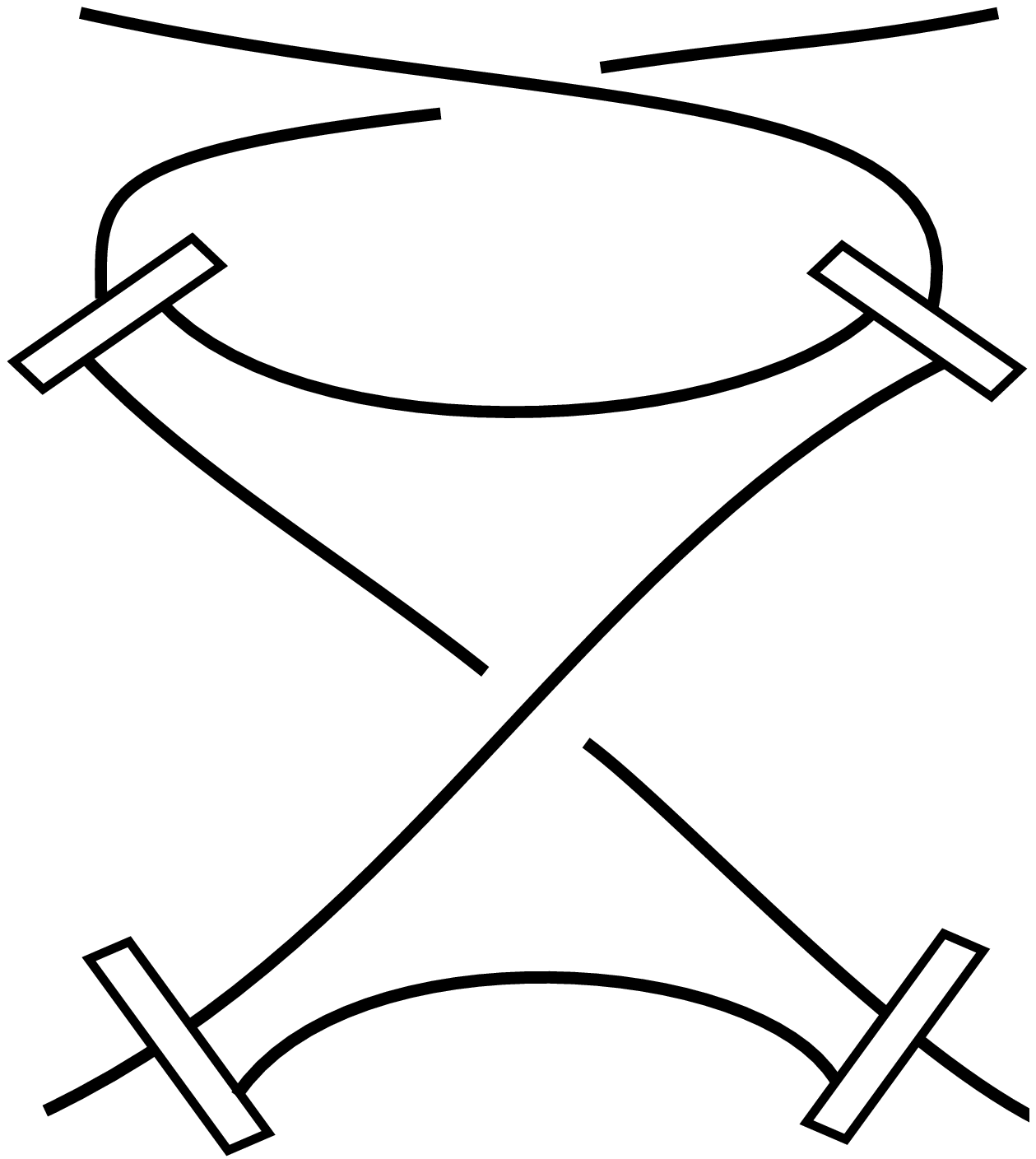}}
        \scriptsize{
          \put(-56,45){$2n$}
          \put(0,45){$2n$}
           \put(-35,15){$n$}
          \put(-15,15){$n$}
           }
          \end{minipage}
          \\&=&(-1)^{n} A^{3n^2+2n} (-1)^{-n} A^{-3n^2-2n}\begin{minipage}[h]{0.15\linewidth}
        \vspace{0pt}
        \scalebox{.18}{\includegraphics{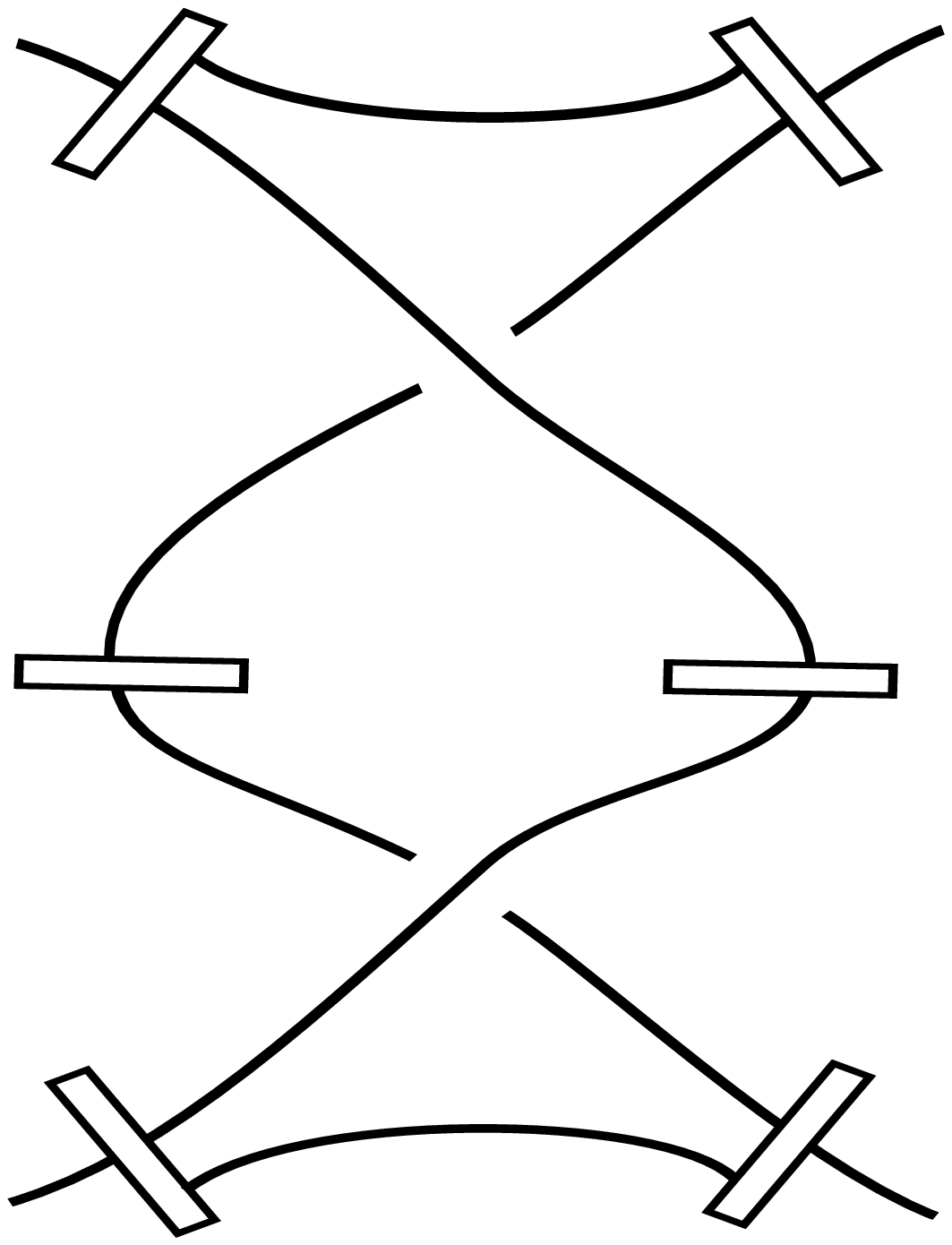}}
        \scriptsize{
          \put(-58,-5){$2n$}
          \put(-1,-5){$2n$}
           \put(-58,73){$2n$}
          \put(-1,73){$2n$}
           }
          \end{minipage}=\begin{minipage}[h]{0.08\linewidth}
        \vspace{0pt}
        \scalebox{0.08}{\includegraphics{singular_map}}
        \tiny{
        \put(-28,40){$n$}
        \put(-28,5){$n$}
        \put(-12,20){$n$}
        \put(-40,20){$n$}
        \put(-1,45){$2n$}
        \put(-48,45){$2n$}
        \put(-1,-3){$2n$}
        \put(-48,-3){$2n$}}
   \end{minipage}
  \end{eqnarray*}
 The last equation follows by doing a Reidemeister $II$ move on the strands. The result follows.  
\end{proof}

\begin{theorem}
\label{main}
Let $G$ be a $4$-valent graph. For an integer $n \geq 1 $, the rational function $[G]_{2n}$ defined  by the rules  

\begin{enumerate}
\item

$   \left[
   \begin{minipage}[h]{0.08\linewidth}
        \vspace{0pt}
        \scalebox{0.15}{\includegraphics{pos_crossing}}
   \end{minipage}
  \right]_{2n}  = \begin{minipage}[h]{0.08\linewidth}
        \vspace{0pt}
        \scalebox{0.08}{\includegraphics{colored_corssing}}
        \tiny{
        \put(-1,45){$2n$}
        \put(-48,45){$2n$}}
   \end{minipage}$
 \item
\vspace{3pt}  
 $ \left[
   \begin{minipage}[h]{0.08\linewidth}
        \vspace{0pt}
        \scalebox{0.08}{\includegraphics{singular}}
   \end{minipage}
  \right]_{2n}=\begin{minipage}[h]{0.08\linewidth}
        \vspace{0pt}
        \scalebox{0.08}{\includegraphics{singular_map}}
        \tiny{
        \put(-28,40){$n$}
        \put(-28,5){$n$}
        \put(-12,20){$n$}
        \put(-40,20){$n$}
        \put(-1,45){$2n$}
        \put(-48,45){$2n$}
        \put(-1,-3){$2n$}
        \put(-48,-3){$2n$}}
   \end{minipage}$ 
 \item
\vspace{8pt}  
 $ \left[
   \begin{minipage}[h]{0.08\linewidth}
        \vspace{0pt}
        \scalebox{0.08}{\includegraphics{strand}}
   \end{minipage}
  \right]_{2n}=\begin{minipage}[h]{0.08\linewidth}
        \vspace{0pt}
        \scalebox{0.1}{\includegraphics{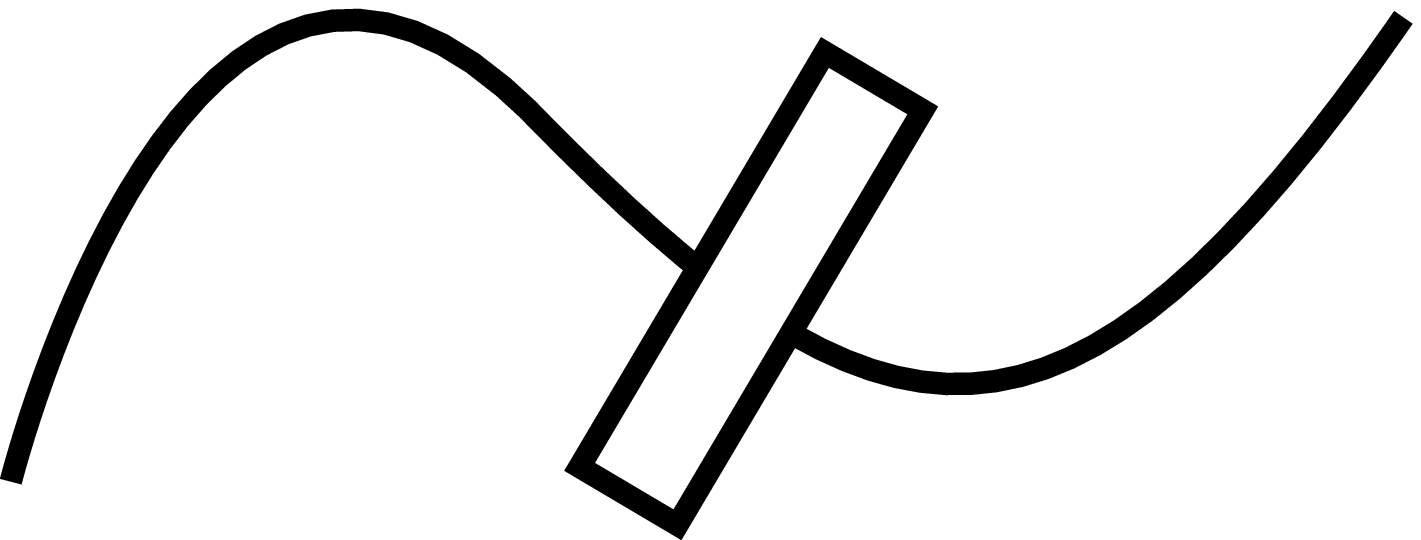}}
        \tiny{ \put(-3,5){$2n$}}
   \end{minipage}$     
  
\end{enumerate}		
is a regular isotopy invariant for rigid $4$-valent graphs.

\end{theorem}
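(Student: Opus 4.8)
The plan is to show that the assignment $G \mapsto [G]_{2n}$ is well defined as an element of $\mathbb{Q}(A)$, and then that it is unchanged under each of the four regular isotopy moves $RII$, $RIII$, $RIV$, $RV$ of Figure \ref{Rmoves}. For well-definedness, observe that the three rules replace each local piece of a diagram (a crossing, a rigid vertex, or an arc) by a linear combination of framed tangles decorated with the Jones--Wenzl idempotent $f^{(2n)}$; closing up a diagram of $G$ produces an element of the skein module $\mathcal{S}(S^3)$, which is free of rank one on the empty link, so $[G]_{2n}$ is a scalar in $\mathbb{Q}(A)$. Invariance then amounts to checking that the two local pictures related by each move have equal skein-module evaluations.

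First I would dispatch the classical moves $RII$ and $RIII$. After cabling, a classical crossing becomes the colored crossing of rule $(1)$, and $RII$, $RIII$ translate into the colored Reidemeister moves of Figure \ref{CRmoves}. These hold in the Kauffman bracket skein module because, expanding the idempotents, each reduces to a finite sequence of ordinary $RII$ and $RIII$ moves on single strands; crucially, no framing factor appears, so both sides give the same scalar. This is also the reason the statement claims only regular isotopy: a curl on a $2n$-cable contributes the factor $(-1)^{2n}A^{-(2n)^2-2(2n)}$ coming from property $(1)$ of \ref{properties_2}, so $RI$ genuinely fails.

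Next come the two singular moves, which carry the real content. For $RIV$ I would replace the rigid vertex by the singular map of rule $(2)$ and the sliding arc by a $2n$-cable, then push the cable across the singular map one constituent arc at a time; each elementary passage is again an instance of the colored $RII$/$RIII$ moves just discussed, so the two sides of $RIV$ agree. The move $RV$ is precisely the content of Lemma \ref{move V}: the twisted vertex configuration appearing on one side of $RV$ (the left-hand diagram of that lemma) evaluates to the singular map of rule $(2)$, so the two diagrams related by $RV$ have identical evaluations.

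I expect the verification of $RV$ to be the main obstacle, and this is where Lemma \ref{technical lemma} does the essential bookkeeping. Resolving the twisted vertex pulls a crossing of two $2n$-colored cables past the four $n$-colored arcs of the singular map; the two parts of Lemma \ref{technical lemma} evaluate the positive and the negative such crossing, each producing a framing factor of the form $(-1)^{n}A^{\pm(3n^2+2n)}$. The delicate point, already carried out in the proof of Lemma \ref{move V}, is that these two factors are mutually inverse and cancel precisely, so that $RV$ holds on the nose and not merely up to a unit. With $RII$, $RIII$, $RIV$, and $RV$ all verified, $[G]_{2n}$ is invariant under regular isotopy and the theorem follows.
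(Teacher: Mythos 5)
Your proposal is correct and follows essentially the same route as the paper: invariance under $RII$, $RIII$, and $RIV$ via the colored Reidemeister moves of Figure \ref{CRmoves} (expanding the idempotents into ordinary moves on single strands), and invariance under $RV$ via Lemma \ref{move V}, whose proof indeed rests on the cancellation of the framing factors $(-1)^{n}A^{\pm(3n^2+2n)}$ from Lemma \ref{technical lemma}. Your added remarks on well-definedness and on why $RI$ fails are sound additions (though the curl factor comes from the second, not the first, identity in \ref{properties_2}).
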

\begin{proof}
The moves shown in Figure \ref{CRmoves} are  a finite sequence of the usual Reidemeister moves $II$ and $III$ applied on each single strand and summand of the idempotents. Hence $[.]_{2n}$ is invariant under Reidemeister moves $II$ and $III$. The same argument holds for the two diagrams in Figure \ref{forthmove} and hence $[.]_{2n}$ is invariant under Reidemeister $IV$. Finally, the invariance under move $V$ follows from Lemma \ref{move V}.

\begin{figure}[H]
\label{forthmove}
  \centering
   {\includegraphics[scale=0.12]{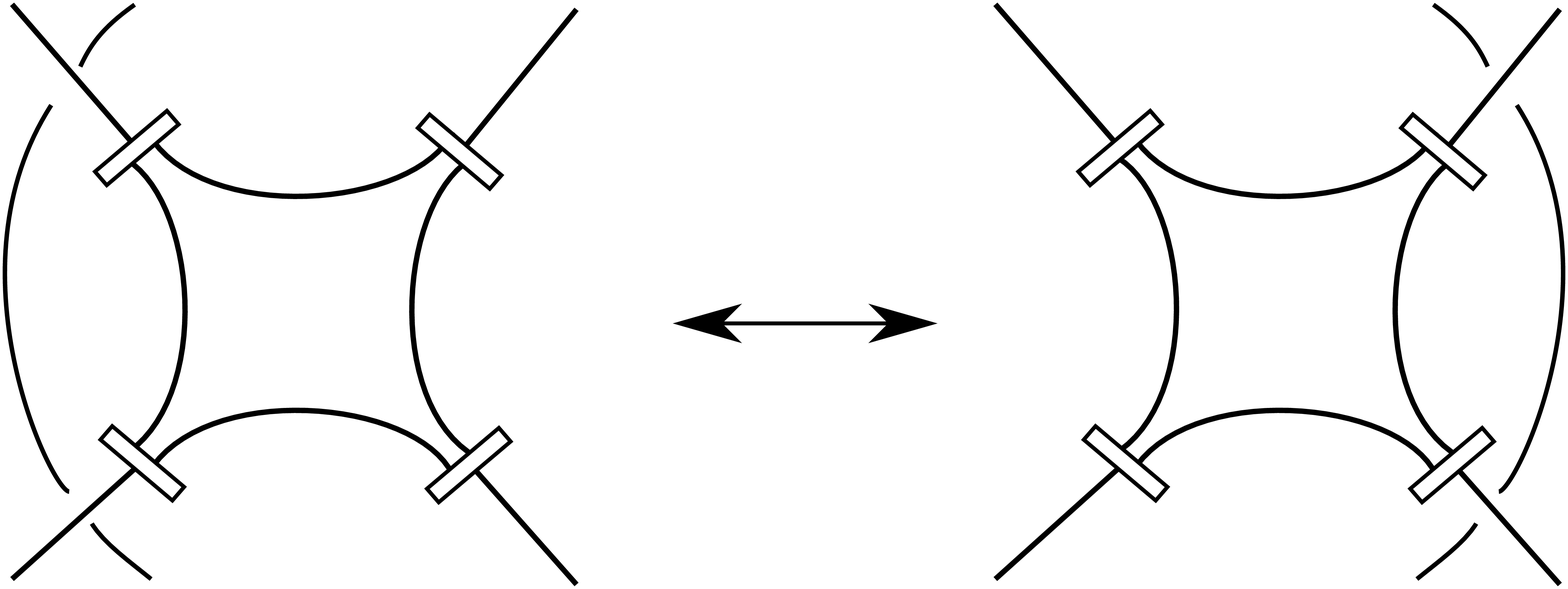}
     \caption{}
  \label{singular move}}
\end{figure}
\end{proof}
\begin{remark}
The invariant $[.]_{2n}$ can be seen to be an extension for the unreduced colored Jones polynomial $\tilde{J}(.,2n)$ for links in $S^3$. Namely, for a zero-framed knot $K$ in $S^3$ we have $\tilde{J}(K,2n)=[K]_{2n}$.
\end{remark}
\subsection{Examples}
In this sub-section we give some computational  examples of our invariants. Before we compute some examples we give some identities that we will use in our computations. 
Recall that the $q$-Pochhammer is defined as 
\begin{equation*}
(a;q)_n=\prod\limits_{j=0}^{n-1}(1-aq^j).
\end{equation*}   
We will need the following fact from \cite{Hajij3} :
\begin{eqnarray}
\label{greatness}
   \begin{minipage}[h]{0.15\linewidth}
        \vspace{0 pt}
        \scalebox{.4}{\includegraphics{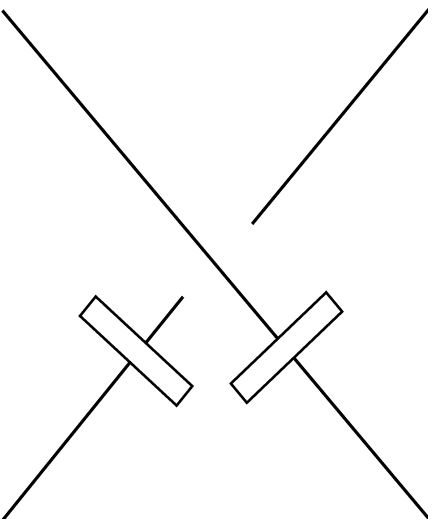}}
        \scriptsize{
         \put(-55,+52){$n$}
          \put(-1,+52){$n$}}

   \end{minipage}
   =
     \displaystyle\sum\limits_{i=0}^{n}C_{n,i}
  \begin{minipage}[h]{0.15\linewidth}
        \vspace{0pt}
        \scalebox{.4}{\includegraphics{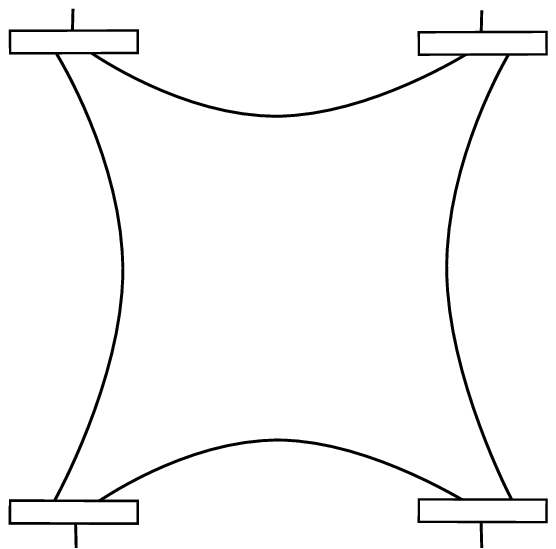}}
        \scriptsize{
        \put(-60,+64){$n$}
          \put(-5,+64){$n$}
          \put(-58,30){$i$}
          \put(-7,30){$i$}
         \put(-43,+56){$n-i$}
           \put(-43,+16){$n-i$}
           }
          \end{minipage}
  \end{eqnarray}
  where
  \begin{equation}
  \label{my fav}
  C_{n,i}=A^{n^2+2i^2-4in}\frac{(A^4,A^4)_n}{(A^4,A^4)_i(A^4,A^4)_{n-i}}.
  \end{equation}
We will also need the following identity from \cite{Masbaum}

\begin{eqnarray}
\label{masbum formula}
   \begin{minipage}[h]{0.15\linewidth}
        \vspace{0 pt}
        \scalebox{.4}{\includegraphics{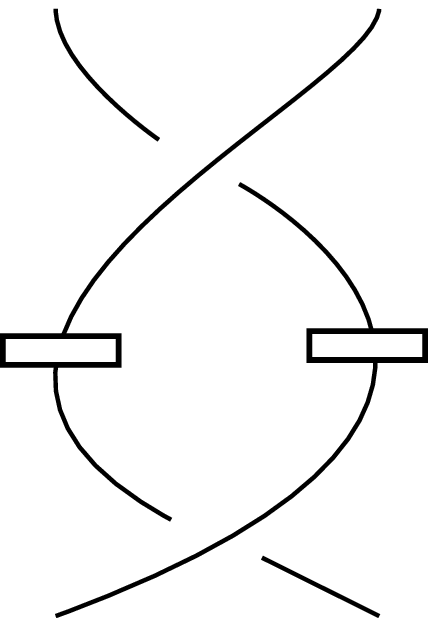}}
        \scriptsize{
         \put(-55,+52){$n$}
          \put(-1,+52){$n$}}

   \end{minipage}
   =
     \displaystyle\sum\limits_{i=0}^{n}D_{n,i}
  \begin{minipage}[h]{0.17\linewidth}
        \vspace{0pt}
        \scalebox{.4}{\includegraphics{goodbasis}}
        \scriptsize{
        \put(-60,+64){$n$}
          \put(-5,+64){$n$}
         \put(-33,+56){$i$}
           \put(-33,+16){$i$}
           }
          \end{minipage}
  \end{eqnarray}  
where

\begin{equation*}
D_{n,i}=A^{2 i^2 - 4 i n + 2 n^2}\frac{(A^4,A^4)_n}{(A^4,A^4)_i(A^4,A^4)_{n-i}} \prod_{j=n-i+1}^n (1-A^{-4j}) 
\end{equation*} 

\begin{example}\label{example1}
We compute the invariant $[G]_{2n}$ for the graph given in the following Figure \ref{Example1}.
\begin{figure}[H]
\label{IV}

   {\includegraphics[scale=0.2]{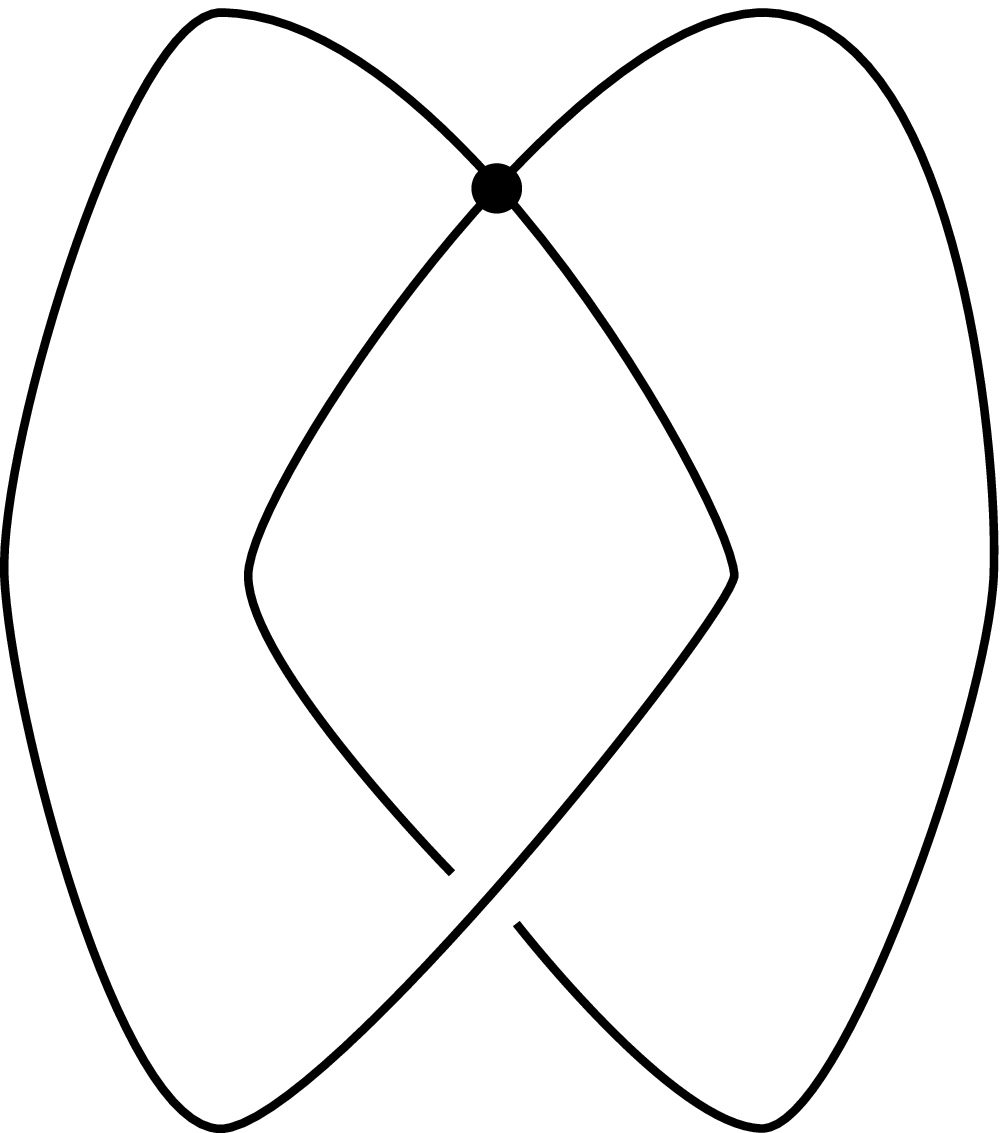}
     \caption{}
  \label{Example1}}
\end{figure}
Lemma \ref{technical lemma} implies that:
\begin{eqnarray*}
   \begin{minipage}[h]{0.12\linewidth}
        \vspace{0pt}
        \scalebox{0.08}{\includegraphics{singular_map}}
        \tiny{
        \put(-1,45){$2n$}
        \put(-48,45){$2n$}
        \put(-28,40){$n$}
        \put(-28,5){$n$}
        \put(-12,20){$n$}
        \put(-40,20){$n$}}
   \end{minipage}\times 
   \begin{minipage}[h]{0.1\linewidth}
        \vspace{0pt}
        \scalebox{0.08}{\includegraphics{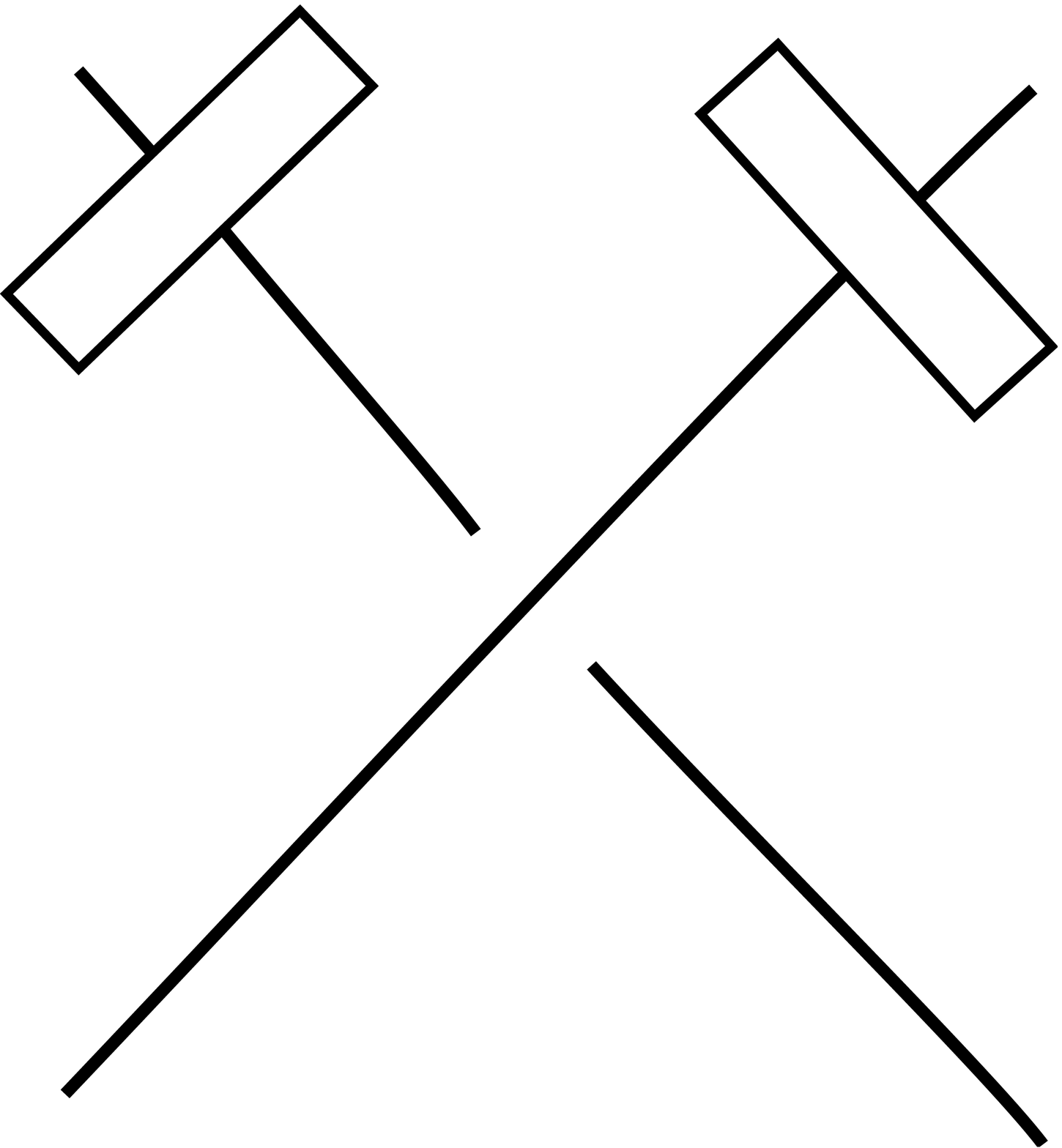}}
        \tiny{
        \put(-1,-3){$2n$}
        \put(-48,-3){$2n$}}
   \end{minipage}
   = (-1)^{-n} A^{-3n^2-2n}\quad\begin{minipage}[h]{0.12\linewidth}
        \vspace{18pt}
        \scalebox{.13}{\includegraphics{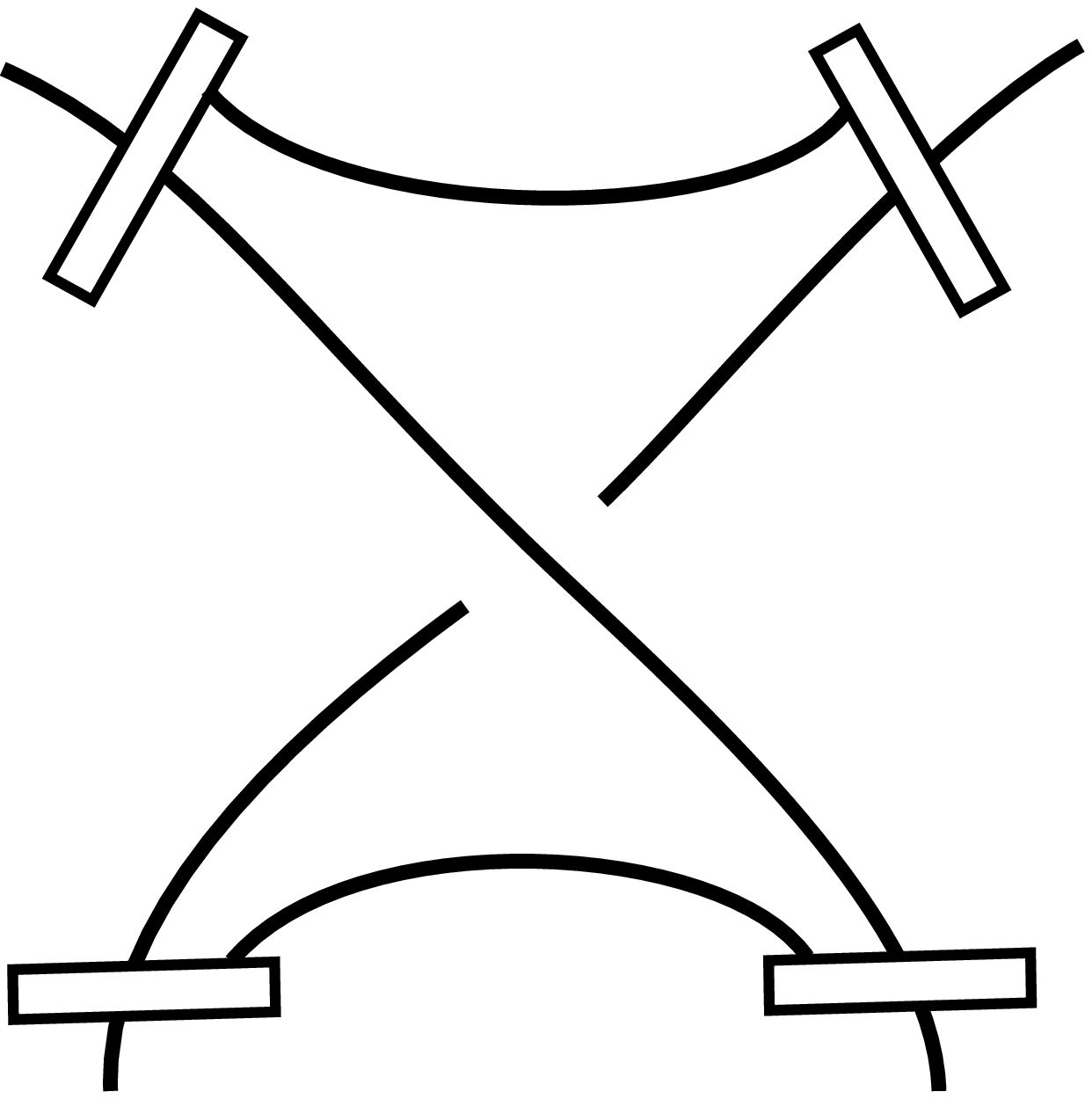}}
        \scriptsize{
          \put(-56,45){$2n$}
          \put(-5,45){$2n$}
           }
          \end{minipage}
  \end{eqnarray*}

Hence we obtain,  
\begin{eqnarray*}
   \begin{minipage}[h]{0.12\linewidth}
        \vspace{0pt}
        \scalebox{0.08}{\includegraphics{singular_map}}
        \tiny{
        \put(-1,45){$2n$}
        \put(-48,45){$2n$}
        \put(-28,40){$n$}
        \put(-28,5){$n$}
        \put(-12,20){$n$}
        \put(-40,20){$n$}}
   \end{minipage}\times 
   \begin{minipage}[h]{0.1\linewidth}
        \vspace{0pt}
        \scalebox{0.08}{\includegraphics{colored_corssing_2}}
        \tiny{
        \put(-1,-3){$2n$}
        \put(-48,-3){$2n$}}
   \end{minipage}
   =(-1)^{-n} A^{-3n^2-2n}\sum_{i=0}^n C_{n,i} \begin{minipage}[h]{0.13\linewidth}
        \vspace{0pt}
        \scalebox{0.08}{\includegraphics{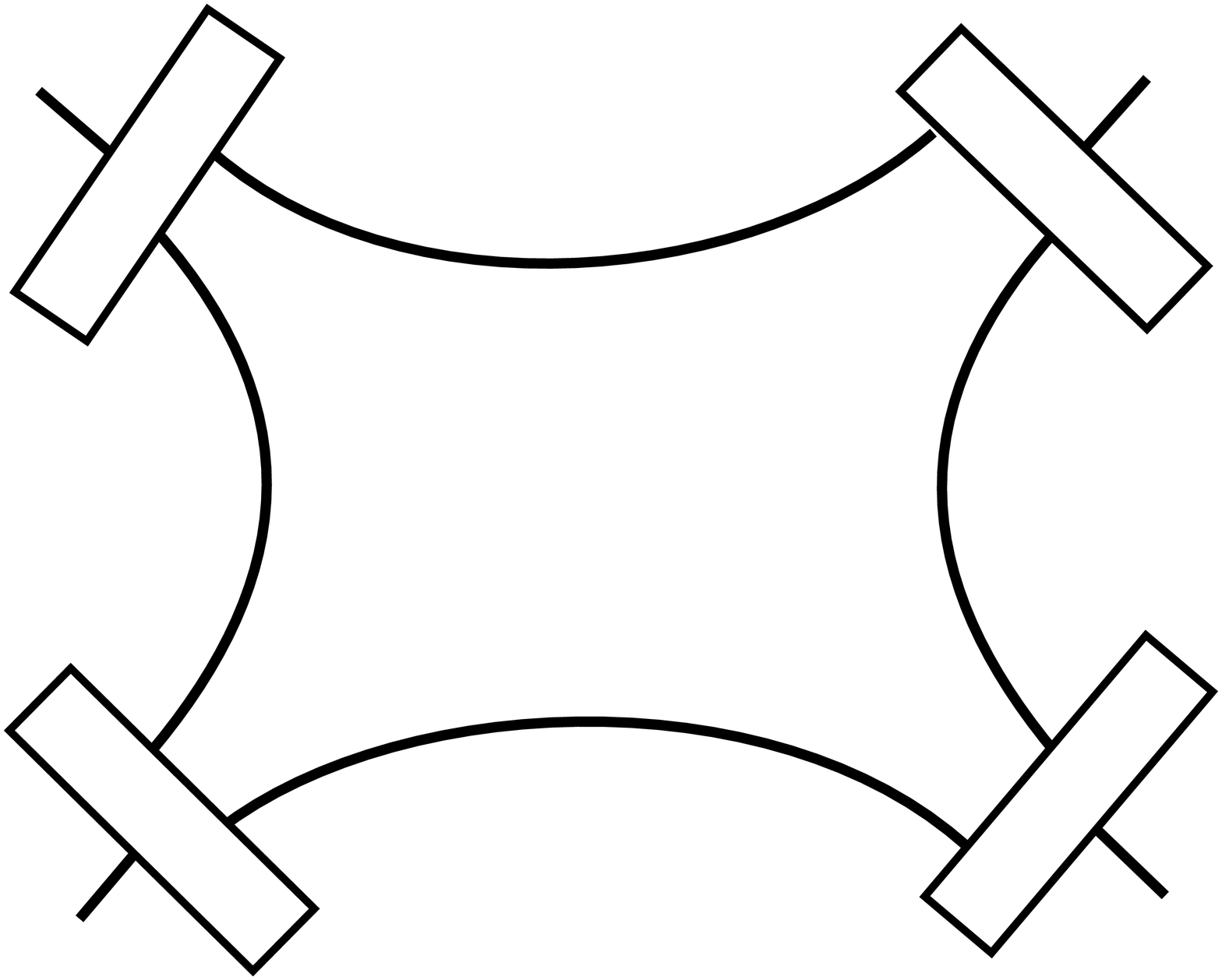}}
        \tiny{
        \put(-1,47){$2n$}
        \put(-63,47){$2n$}
        \put(-42,40){$2n-i$}
        \put(-42,5){$2n-i$}
        \put(-10,20){$i$}
        \put(-55,20){$i$}}
   \end{minipage}
  \end{eqnarray*}
We then conclude that,
 \begin{eqnarray*}
  \left[
   \begin{minipage}[h]{0.1\linewidth}
        \vspace{0pt}
        \scalebox{0.15}{\includegraphics{singular_knot_example_1}}
   \end{minipage}
  \right]_{2n}  &=&(-1)^{-n} A^{-3n^2-2n}\sum_{i=0}^n C_{n,i} \begin{minipage}[h]{0.1\linewidth}
        \vspace{0pt}
        \scalebox{0.15}{\includegraphics{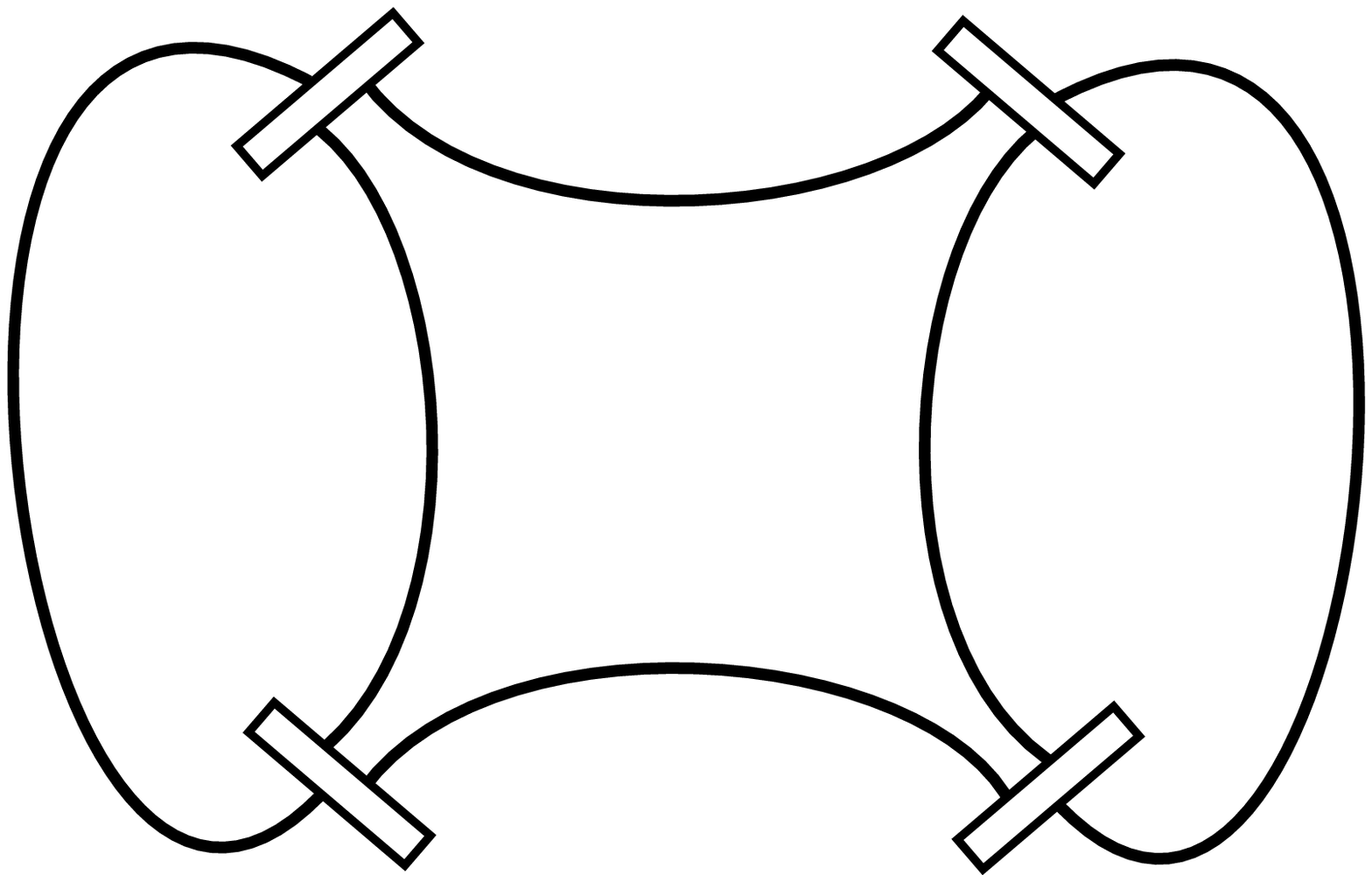}}
        \tiny{
        \put(-1,40){$2n$}
        \put(-46,28){$2n-i$}}
   \end{minipage}\\&=&(-1)^{-n} A^{-3n^2-2n}\sum_{i=0}^n C_{n,i} \frac{(\Delta_{2n})^2}{\Delta_{2n-i}}.
 \end{eqnarray*}
\end{example}

\begin{example}
We compute our invariant for the graph given in Figure \ref{Example2}.
\begin{figure}[H]
\label{IV}

   {\includegraphics[scale=0.2]{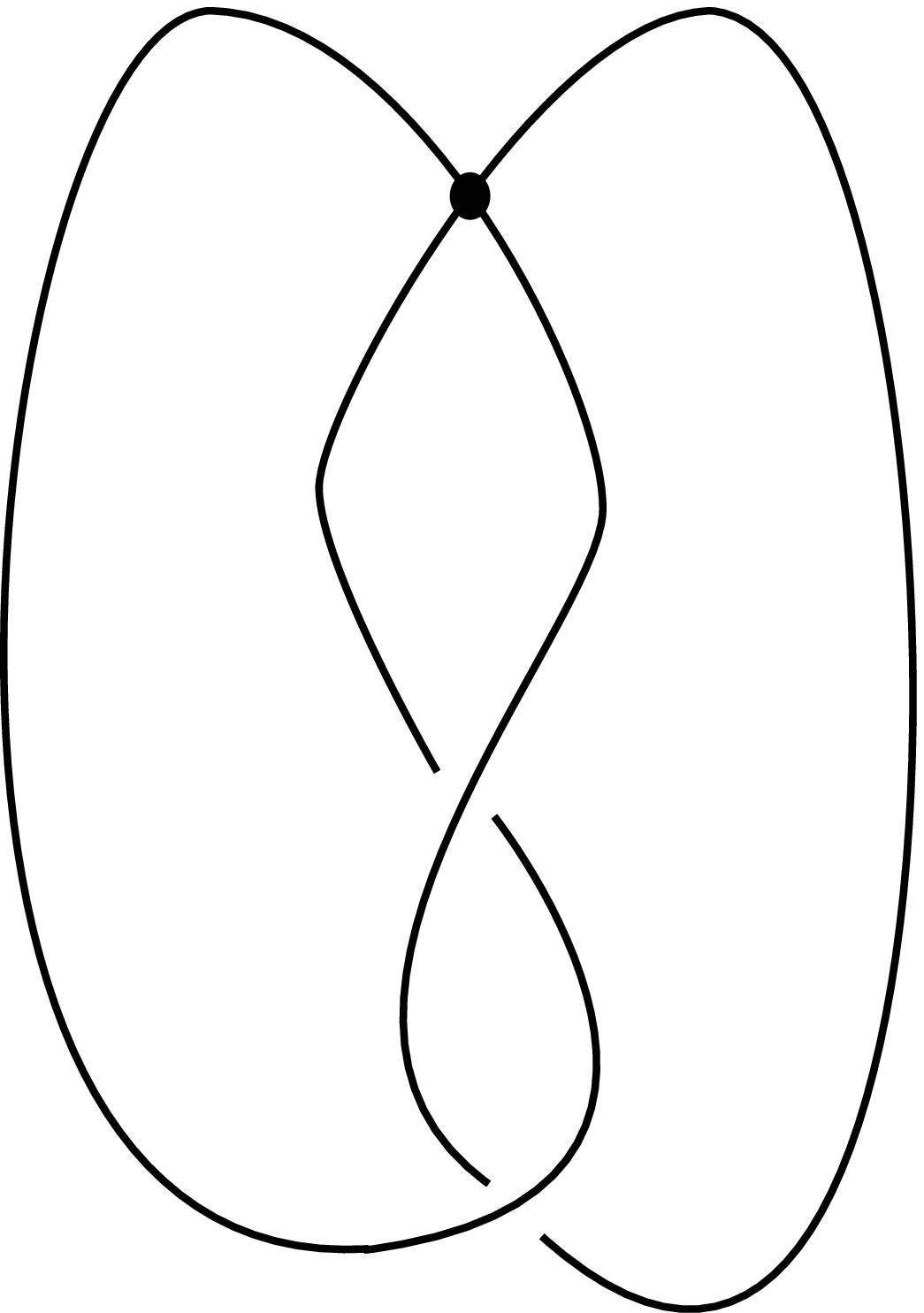}
     \caption{}
  \label{Example2}}
\end{figure}
Using Lemma \ref{technical lemma} we obtain:
\begin{eqnarray*}
   \begin{minipage}[h]{0.12\linewidth}
        \vspace{0pt}
        \scalebox{0.08}{\includegraphics{singular_map}}
        \tiny{
        \put(-1,45){$2n$}
        \put(-48,45){$2n$}
        \put(-28,40){$n$}
        \put(-28,5){$n$}
        \put(-12,20){$n$}
        \put(-40,20){$n$}}
   \end{minipage}\times 
   \begin{minipage}[h]{0.1\linewidth}
        \vspace{0pt}
        \scalebox{0.4}{\includegraphics{double_crossing}}
        \tiny{
        \put(-1,-3){$2n$}
        \put(-48,-3){$2n$}}
   \end{minipage}
   = (-1)^{-n} A^{-3n^2-2n}\quad\begin{minipage}[h]{0.12\linewidth}
        \vspace{18pt}
        \scalebox{.13}{\includegraphics{product_result.eps}}
        \scriptsize{
          \put(-56,45){$2n$}
          \put(-5,45){$2n$}
           }
          \end{minipage}\times 
   \begin{minipage}[h]{0.1\linewidth}
        \vspace{0pt}
        \scalebox{0.08}{\includegraphics{colored_corssing_2}}
        \tiny{
        \put(-1,-3){$2n$}
        \put(-48,-3){$2n$}}
   \end{minipage}
  \end{eqnarray*}

Hence,  
\begin{eqnarray*}
   \begin{minipage}[h]{0.12\linewidth}
        \vspace{0pt}
        \scalebox{0.08}{\includegraphics{singular_map}}
        \tiny{
        \put(-1,45){$2n$}
        \put(-48,45){$2n$}
        \put(-28,40){$n$}
        \put(-28,5){$n$}
        \put(-12,20){$n$}
        \put(-40,20){$n$}}
   \end{minipage}\times 
   \begin{minipage}[h]{0.1\linewidth}
        \vspace{0pt}
        \scalebox{0.4}{\includegraphics{double_crossing}}
        \tiny{
        \put(-1,-3){$2n$}
        \put(-48,-3){$2n$}}
   \end{minipage}
   =A^{-6n^2-4n}\sum_{i=0}^n D_{n,i} \begin{minipage}[h]{0.13\linewidth}
        \vspace{0pt}
        \scalebox{0.08}{\includegraphics{singular_map_2}}
        \tiny{
        \put(-1,47){$2n$}
        \put(-63,47){$2n$}
        \put(-42,40){$n+i$}
        \put(-42,5){$n+i$}
        \put(-10,20){$n-i$}
        \put(-55,20){$n-i$}}
   \end{minipage}
  \end{eqnarray*}
Thus,
 \begin{eqnarray*}
  \left[
   \begin{minipage}[h]{0.1\linewidth}
        \vspace{0pt}
        \scalebox{0.15}{\includegraphics{singular_knot_example_2}}
   \end{minipage}
  \right]_{2n}  &=& A^{-6n^2-4n}\sum_{i=0}^n D_{n,i} \frac{(\Delta_{2n})^2}{\Delta_{n+i}}.
 \end{eqnarray*}
 \end{example}
 
\begin{example}\textit{ Connected sums.} Let $K$ and $K^{\prime}$ be oriented knots. We claim that $[K]_{2n}[K^{\prime}]_{2n}\Delta_{2n}=[K\#K{^{\prime}}]_{2n}$ where $K \#K^{\prime}$ is the connected sum of $K$ and $K^{\prime}$. Using the basic properties of the Jones Wenzl idempotent, we can write $[K]_{2n}=R_1(A)\Delta_{2n}$ and  $[K^{\prime}]_{2n}=R_2(A)\Delta_{2n}$ where $R_1(A)$ and $R_2(A)$ are rational functions. Similarly, the skein element on the bottom of Figure \ref{connected sum} is equal to $[K]_{2n}[K^{\prime}]_{2n}\Delta_{2n}$.

\begin{figure}[H]
\label{connected sum}
   \centering
    \scalebox{0.27} {\includegraphics{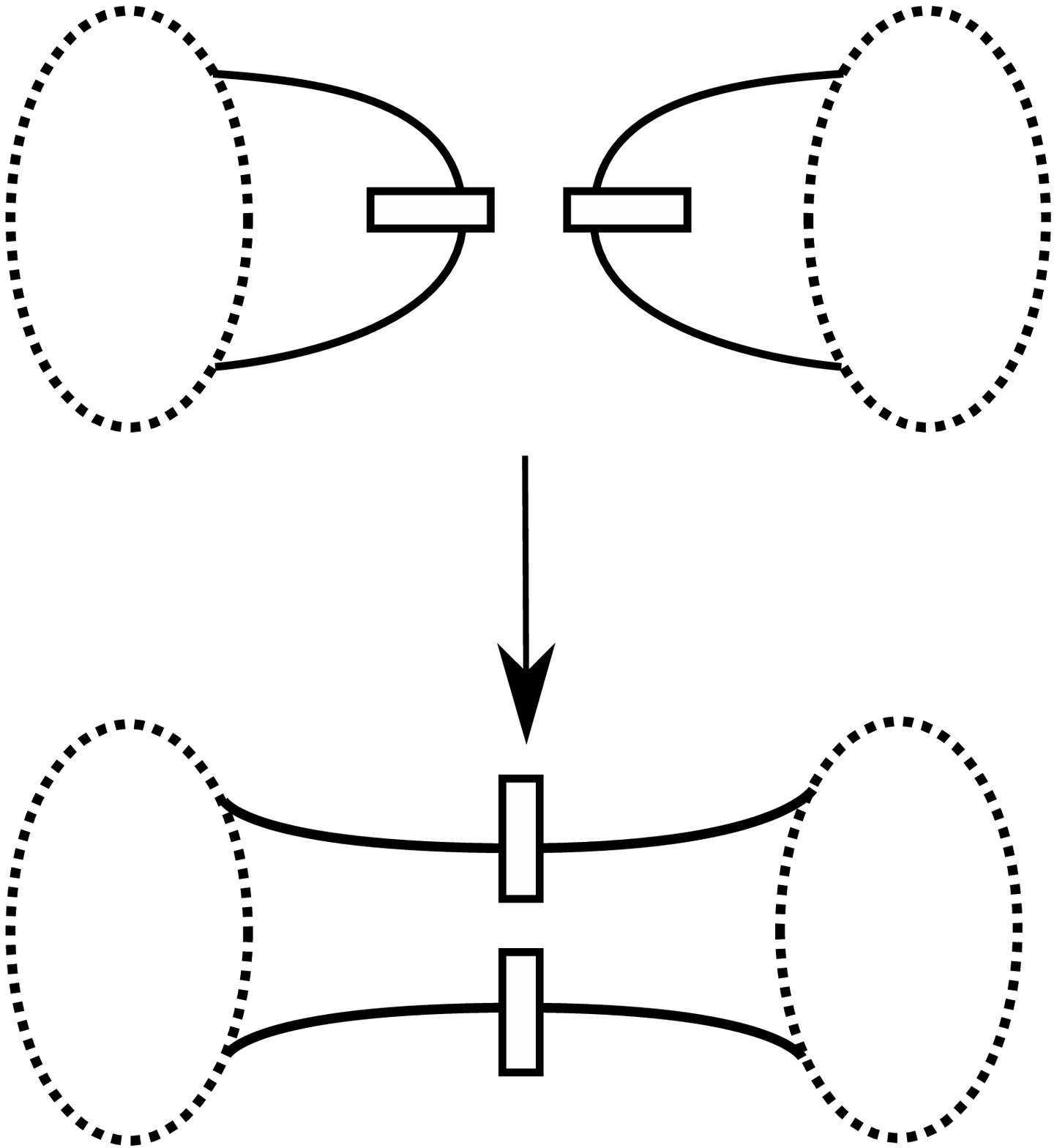}}
    \put(-20,108){$K^{\prime}$}
    \put(-110,108){$K$}
    \put(-25,22){$K^{\prime}$}
    \put(-110,22){$K$}
      \put(-47,121){$2n$}
         \put(-85,121){$2n$}
            \put(-76,38){$2n$}
        \put(-76,4){$2n$} 
   \caption{}
\end{figure}

\end{example}

\section{Singular Braid Monoid Representations}
\label{sec5}
The singular braid monoid was introduced in \cite{Baez, Birman} as a singularization of the braid group and in relation to perturbative Chern-Simons theory. In this section we use the invariant that we defined in the previous sections to give representations of the singular braid monoid. We start with the algebraic definition of the singular braid monoid  \cite{Baez,Birman}.
\begin{definition}
The singular braid monoid $SB_n$ on $n$ strands is the monoid generated by 
\begin{equation}
\sigma_1,...,\sigma_{n-1},\sigma_1^{-1},...,\sigma_{n-1}^{-1}, \tau_1,...,\tau_{n-1}
\end{equation}
subject to the relations
\begin{enumerate}
\item For all $1\leq  i < n$:   $\sigma_i \sigma_i^{-1} = e=\sigma_i^{-1} \sigma_i $.
\item For $|i-j|>1$:
\begin{enumerate}
\item $\sigma_i \sigma_j= \sigma _j \sigma _i$. 
\item $\sigma_i \tau_j= \tau _j \sigma _i$. 
\item $\tau_i \tau_j= \tau_j \tau_i$. 
\end{enumerate}
\item For all $1\leq  i < n:\tau_i \sigma_i=\sigma_i \tau_i$.
\item For all $i<n-1$: 
\begin{enumerate}
\item $\sigma_i \sigma_{i+1} \sigma_i = \sigma_{i+1}\sigma_i \sigma_{i+1}$
\item $\tau_i \sigma_{i+1} \sigma_i = \sigma_{i+1}\sigma_i \tau_{i+1}$
\item $\tau_{i+1} \sigma_{i} \sigma_{i+1} = \sigma_{i}\sigma_{i+1} \tau_{i}$
\end{enumerate}
\end{enumerate}
\end{definition}
 Now we will consider a sequence of representations of the monoid $SB_n$ into the colored Temperley-Lieb algebra $TL_{n}^{2m}$.

\begin{theorem}
For every integers $m,n \geq 1$, the map $\hat{\rho}_{m,n}$ given on the generators $\sigma_i$ and $\tau_i$ in the diagrammatic below gives a representation of $SB_n$ into $TL_{n}^{2m}$.
\begin{eqnarray}
\label{rep}
 \hspace{-299 pt}\begin{minipage}[h]{0.0\linewidth}
        \scalebox{0.12}{\includegraphics{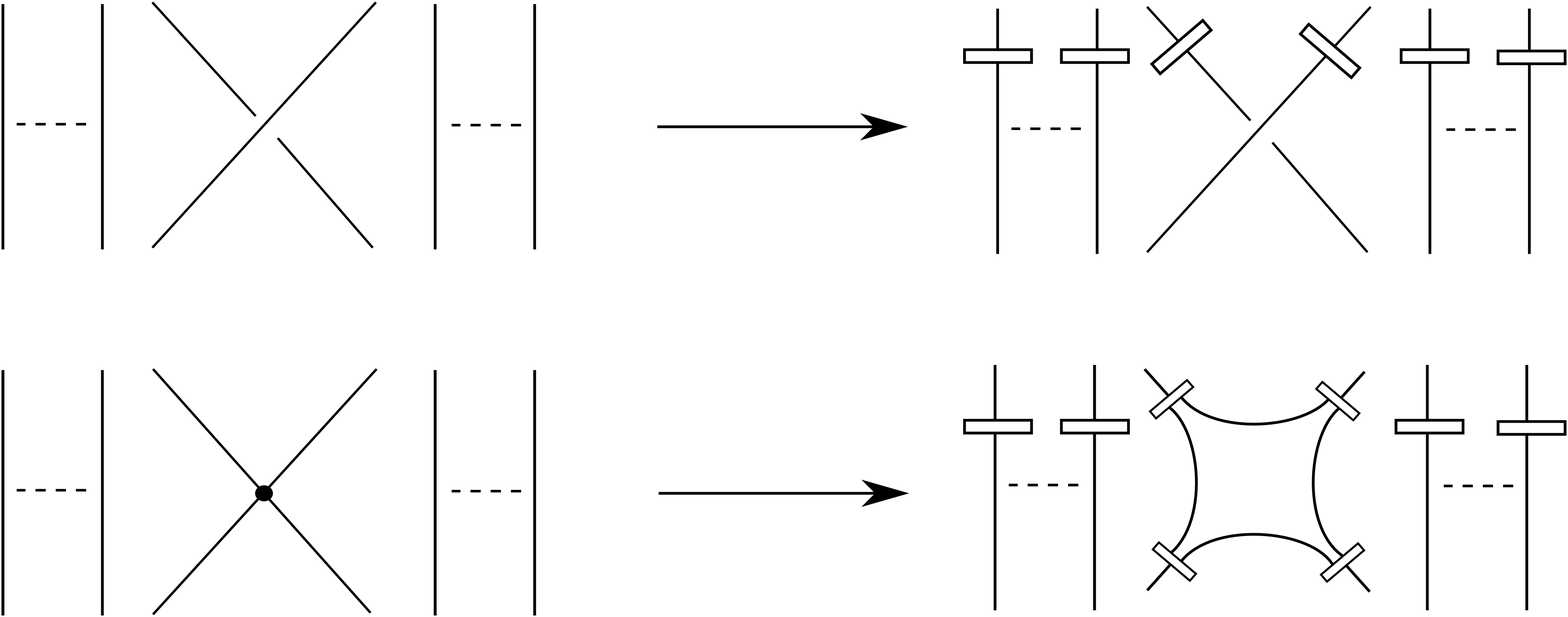}}
         \put(-180,120){$ \hat{\rho}_{m,n}$}
         \put(-180,35){$ \hat{\rho}_{m,n}$}
         \put(-360,100){$ \sigma_i=$}
         \put(-360,25){$ \tau_i=$}
         \tiny{
         \put(-134,50){$2m$}
         \put(-113,50){$2m$}
         \put(-85,53){$2m$}
         \put(-55,53){$2m$}
         \put(-28,50){$2m$}
         \put(-7,50){$2m$}%
         \put(-134,130){$2m$}
         \put(-113,130){$2m$}
         \put(-85,133){$2m$}
         \put(-55,133){$2m$}
         \put(-28,130){$2m$}
         \put(-7,130){$2m$}     
         }
   \end{minipage}  
\end{eqnarray} 
\end{theorem}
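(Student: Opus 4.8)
The plan is to verify that $\hat\rho_{m,n}$ respects every defining relation of $SB_n$, so that it descends to a well-defined homomorphism from the monoid to the multiplicative monoid of $TL_n^{2m}$. The generators $\sigma_i$, $\sigma_i^{-1}$ and $\tau_i$ are sent to the positive crossing, the negative crossing, and the rigid vertex placed on the $i$-th and $(i+1)$-st clusters, each cluster colored by $f^{(2m)}$ and the remaining clusters carrying the idempotent $f^{(2m)}$; thus every relation in the presentation becomes an identity between two local skein elements built from $2m$-colored strands. The tool used throughout is that Reidemeister moves $II$ and $III$ hold for strands colored by the Jones--Wenzl idempotent, as recorded in Figure \ref{CRmoves}.

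First I would dispose of the relations not involving $\tau$. The restriction of $\hat\rho_{m,n}$ to the subgroup generated by the $\sigma_i$ is precisely the braid group representation $\rho_{m,n}$ of Section \ref{sec2}; hence the invertibility relation $\sigma_i\sigma_i^{-1}=e$ (a colored Reidemeister $II$ move), the far commutativity $\sigma_i\sigma_j=\sigma_j\sigma_i$ for $|i-j|>1$, and the braid relation $\sigma_i\sigma_{i+1}\sigma_i=\sigma_{i+1}\sigma_i\sigma_{i+1}$ (a colored Reidemeister $III$ move) already hold. The remaining far commutativity relations $\sigma_i\tau_j=\tau_j\sigma_i$ and $\tau_i\tau_j=\tau_j\tau_i$ for $|i-j|>1$ hold because the two local tangles occupy disjoint vertical bands of $I\times I\times I$ and commute by planar isotopy.

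Next I would treat the relations mixing crossings and vertices. For $\tau_i\sigma_i=\sigma_i\tau_i$ in item $(3)$, stacking the rigid vertex and a crossing on the same pair of $2m$-colored clusters produces exactly the configuration of Lemma \ref{move V}, which is the colored form of the singular Reidemeister move $V$; the equality of the two stackings then follows from that lemma together with a colored Reidemeister $II$ move. For the mixed triangle relations $\tau_i\sigma_{i+1}\sigma_i=\sigma_{i+1}\sigma_i\tau_{i+1}$ and $\tau_{i+1}\sigma_i\sigma_{i+1}=\sigma_i\sigma_{i+1}\tau_i$ in item $(4)$, each side is the image of a singular braid in which a rigid vertex is carried across a pair of crossings. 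Since the vertex is itself the colored tangle of axiom $(2)$ of Theorem \ref{main}, assembled from Jones--Wenzl projectors and $2m$-colored strands, this is the colored form of the singular Reidemeister move $IV$ of Figure \ref{forthmove}, and it reduces to a finite sequence of the colored Reidemeister $II$ and $III$ moves of Figure \ref{CRmoves} applied to the constituent strands.

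I expect the mixed triangle relations $(4b)$ and $(4c)$ to be the main obstacle. Unlike invertibility and far commutativity, they require that the colored vertex tangle slide coherently past two crossings, and one must check that the cumulative effect of the colored Reidemeister $II$ and $III$ moves on the projector-bounded strands reproduces the vertex on the opposite side of the braid. Once this bookkeeping is carried out for both orientations, every defining relation is verified and $\hat\rho_{m,n}$ is a well-defined representation of $SB_n$ into $TL_n^{2m}$.
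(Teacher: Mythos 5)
Your proposal is correct and takes essentially the same approach as the paper: the paper's one-line proof simply cites Theorem \ref{main}, whose proof consists of exactly the ingredients you deploy relation-by-relation (colored Reidemeister $II$ and $III$ moves of Figure \ref{CRmoves} for the braid and far-commutation relations, the colored move $IV$ for relations $(4b)$ and $(4c)$, and Lemma \ref{move V} for relation $(3)$). Your write-up merely unpacks that citation explicitly, which is a fair reading of what the paper calls ``straightforward.''
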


\begin{proof}
Using Theorem \ref{main}, it is straightforward to see that the images by $\hat{\rho}_{m,n}$ of the relations of the singular braid hold in $TL_{n}^m$ giving a representation of $SB_n$ into $TL_{n}^m$.
\end{proof}	
Note that the restriction of the map $\hat{\rho}_{m,n}$ to $B_n$ is the map ${\rho}_{m,n}$ given in section \ref{sec2}.  
\section{Integrality of the Invariant and Open Questions}
The invariant $[.]_{2n}$ takes values in $\mathbb{Q}(A)$. However, our computations show that it can be made into an element in $\mathbb{Z}[A,A^{-1}]$ by multiplying by a certain Laurent polynomial. More precisely, let $L$ be a singular link with $k$ singular crossings, then we conjecture that multiplying $C_{2n,n}^k$ with $[L]_{2n}$ makes $C_{2n,n}^k[L]_{2n}$ an element of $\mathbb{Z}[A,A^{-1}]$ where $C_{2n,n}$ is defined in (\ref{my fav}). Now we give an illustration that this conjecture cannot be proven using a local argument. To show this, suppose that $L$ is a singular link with only one singular crossing. We use identity~\ref{identity4} from Definition \ref{defG2} and the definition of the Jones-Wenzl idempotent to expand the singular crossing in $L$ as follows:    

\begin{eqnarray*}
	\begin{minipage}[h]{0.1\linewidth}
		\vspace{0pt}
		\scalebox{0.08}{\includegraphics{singular_map}}
		\tiny{
			\put(-1,45){$2$}
			\put(-48,45){$2$}
			\put(-28,40){$1$}
			\put(-28,5){$1$}
			\put(-12,20){$1$}
			\put(-40,20){$1$}}
	\end{minipage}
	&=& \begin{minipage}[h]{0.13\linewidth}
		\vspace{0pt}
		\scalebox{0.2}{\includegraphics{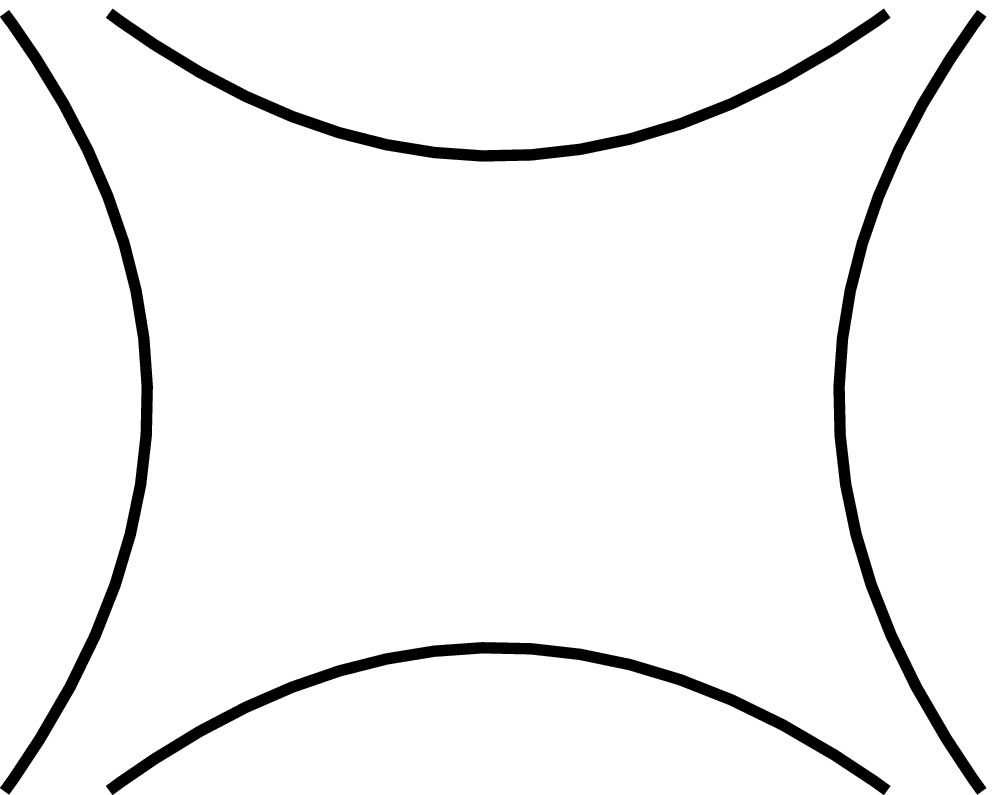}}
	\end{minipage}-\frac{1}{d} \Bigg(\quad\begin{minipage}[h]{0.14\linewidth}
	\vspace{0pt}
	\scalebox{0.2}{\includegraphics{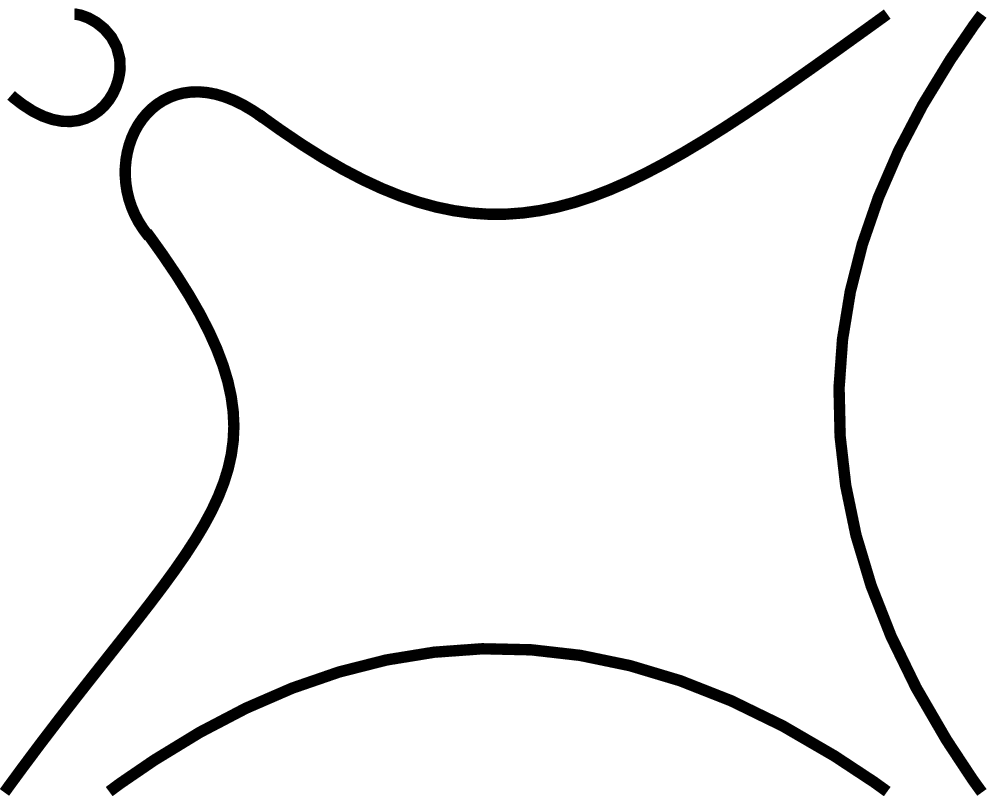}}
\end{minipage}+\begin{minipage}[h]{0.14\linewidth}
\vspace{0pt}
\scalebox{0.2}{\includegraphics{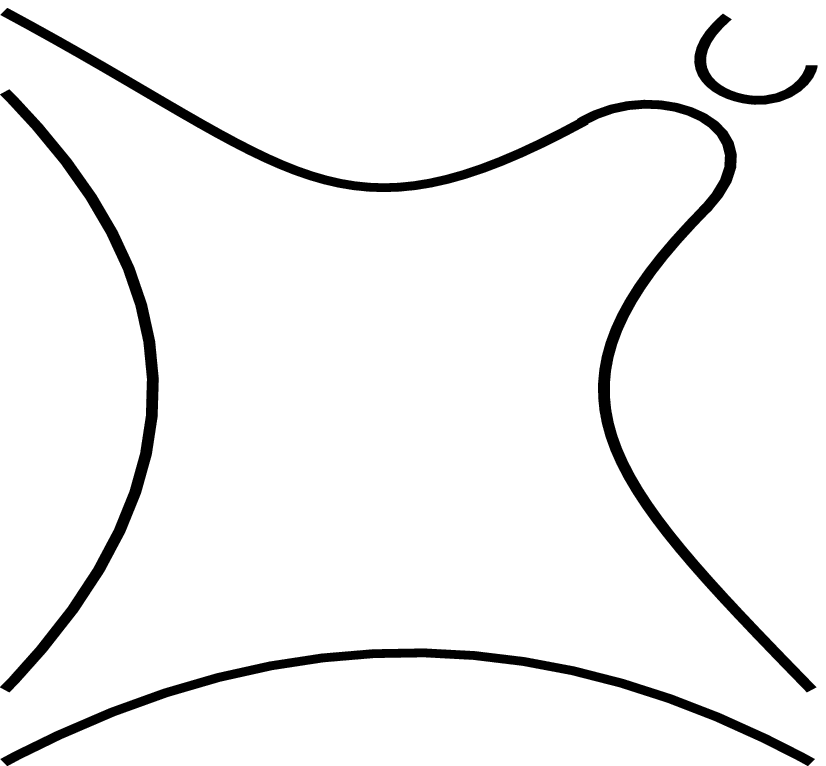}}
\end{minipage}+\begin{minipage}[h]{0.13\linewidth}
\vspace{0pt}
\scalebox{0.2}{\includegraphics{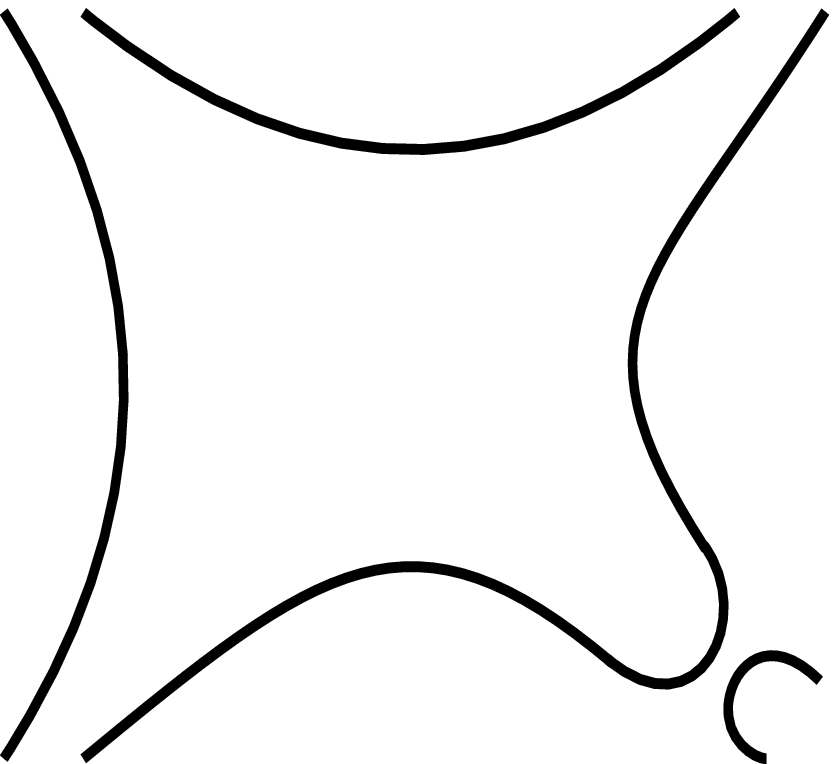}}
\end{minipage}+\begin{minipage}[h]{0.11\linewidth}
\vspace{0pt}
\scalebox{0.2}{\includegraphics{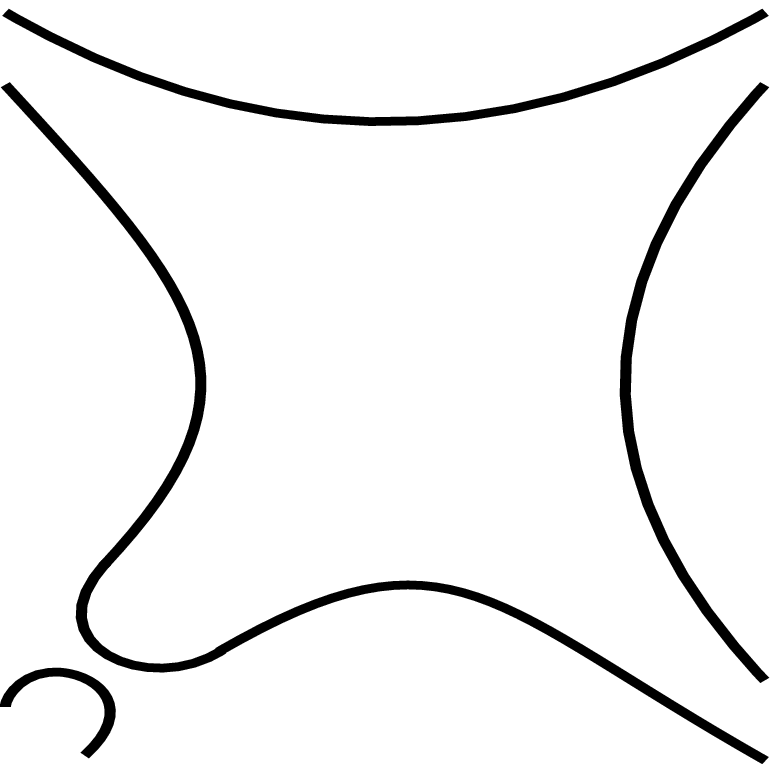}}
\end{minipage}\Bigg)\\&+&\frac{1}{d^2} \Bigg(\quad \begin{minipage}[h]{0.13\linewidth}
\vspace{0pt}
\scalebox{0.2}{\includegraphics{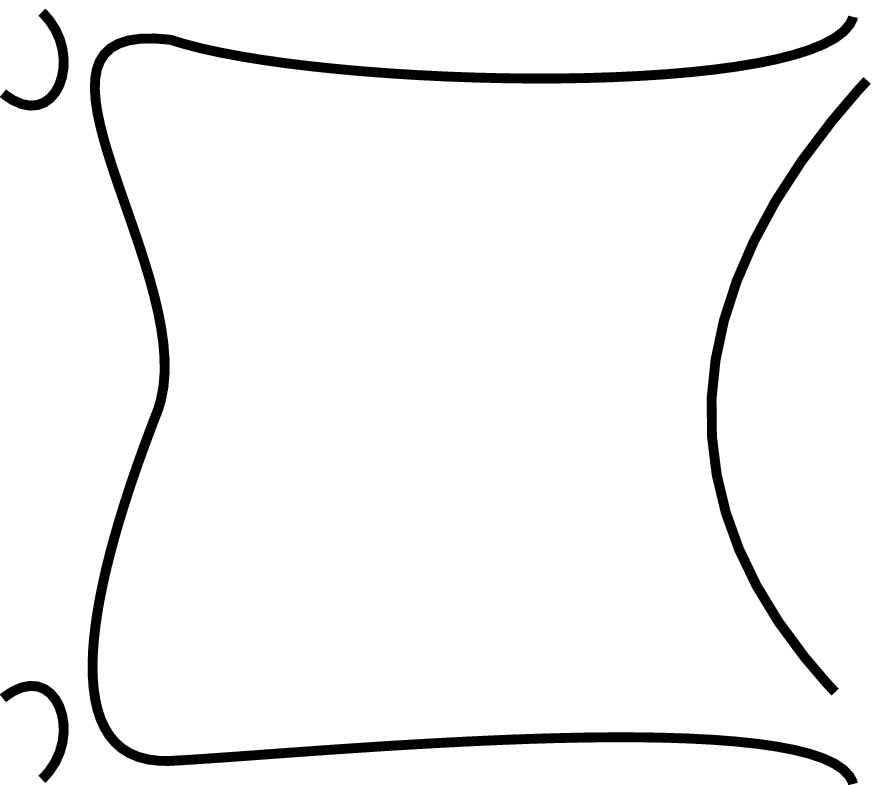}}
\end{minipage}+\begin{minipage}[h]{0.14\linewidth}
\vspace{0pt}
\scalebox{0.2}{\includegraphics{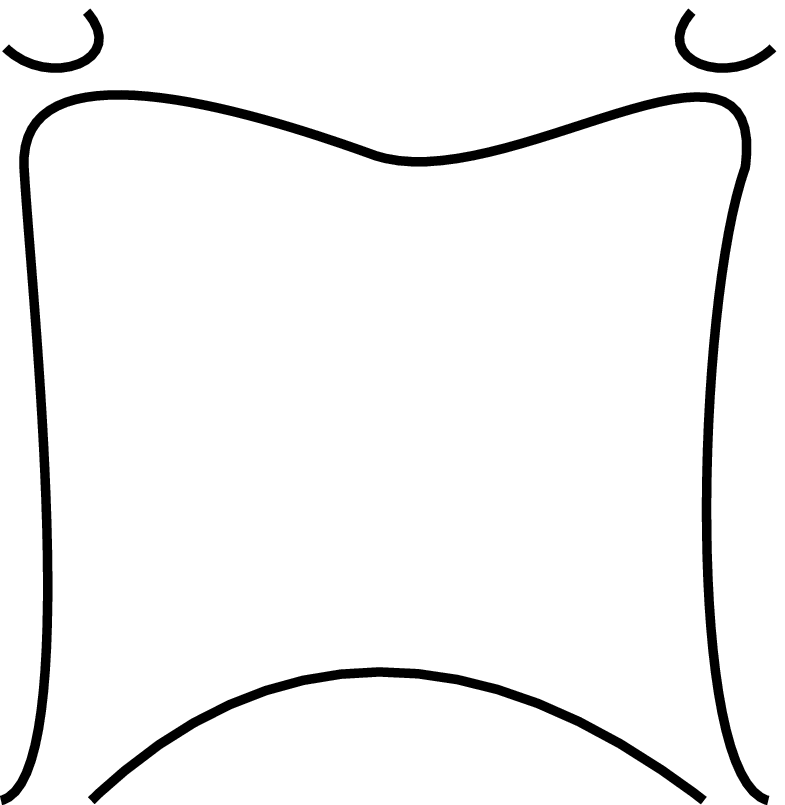}}
\end{minipage}+\begin{minipage}[h]{0.14\linewidth}
\vspace{0pt}
\scalebox{0.2}{\includegraphics{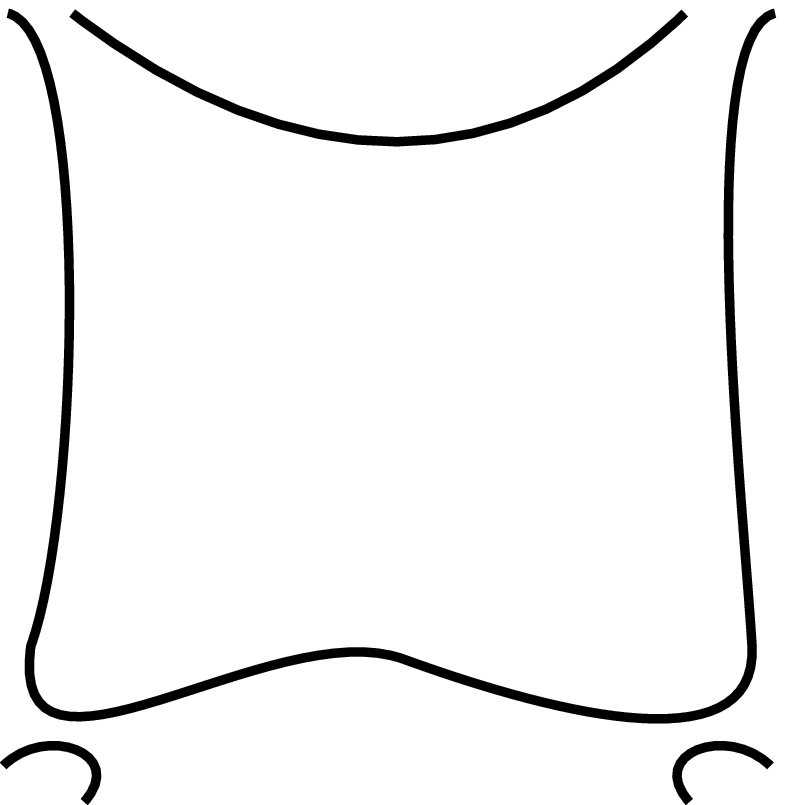}}
\end{minipage}+\begin{minipage}[h]{0.11\linewidth}
\vspace{0pt}
\scalebox{0.2}{\includegraphics{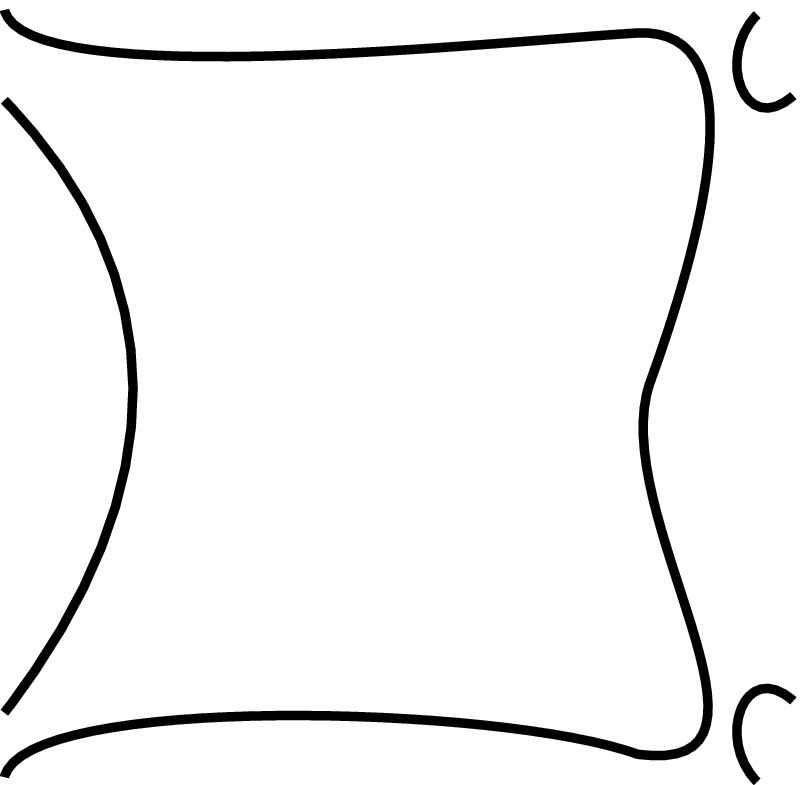}}
\end{minipage}\Bigg)\\&-&\frac{1}{d^3} \Bigg(\quad\begin{minipage}[h]{0.13\linewidth}
\vspace{0pt}
\scalebox{0.2}{\includegraphics{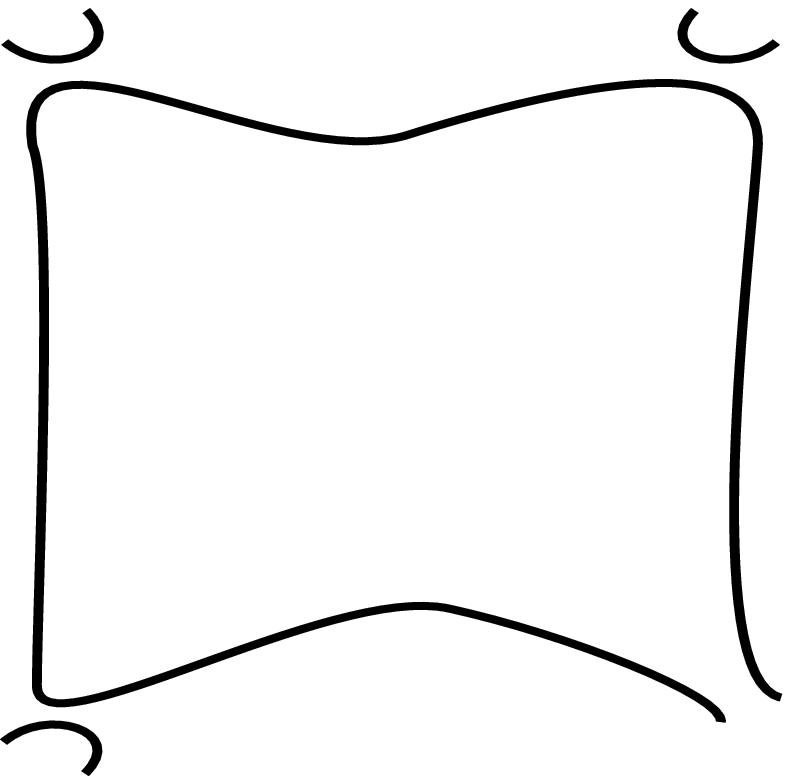}}
\end{minipage}+\begin{minipage}[h]{0.14\linewidth}
\vspace{0pt}
\scalebox{0.2}{\includegraphics{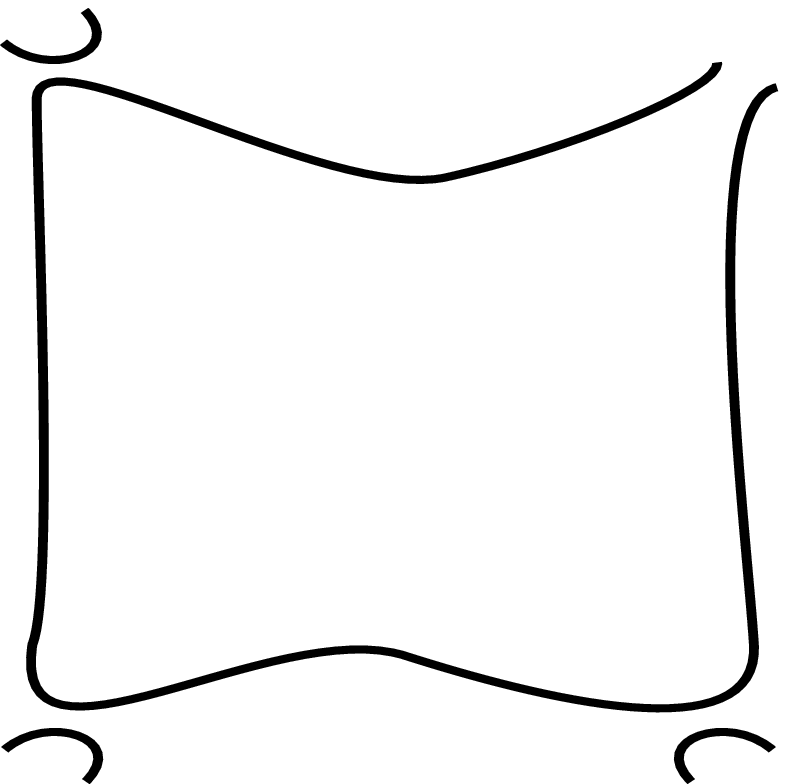}}
\end{minipage}+\begin{minipage}[h]{0.14\linewidth}
\vspace{0pt}
\scalebox{0.2}{\includegraphics{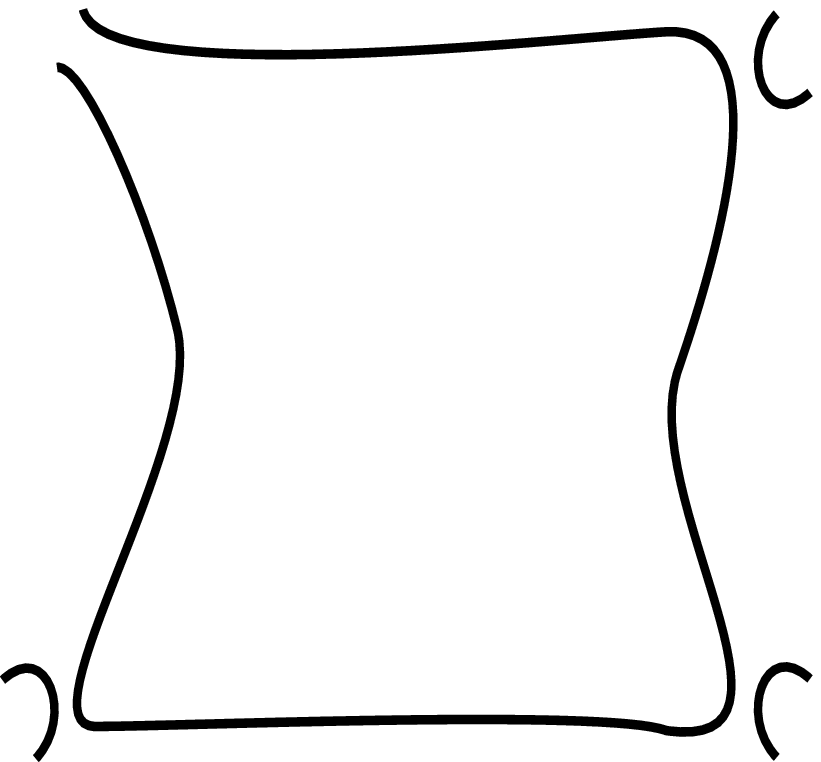}}
\end{minipage}+\begin{minipage}[h]{0.11\linewidth}
\vspace{0pt}
\scalebox{0.2}{\includegraphics{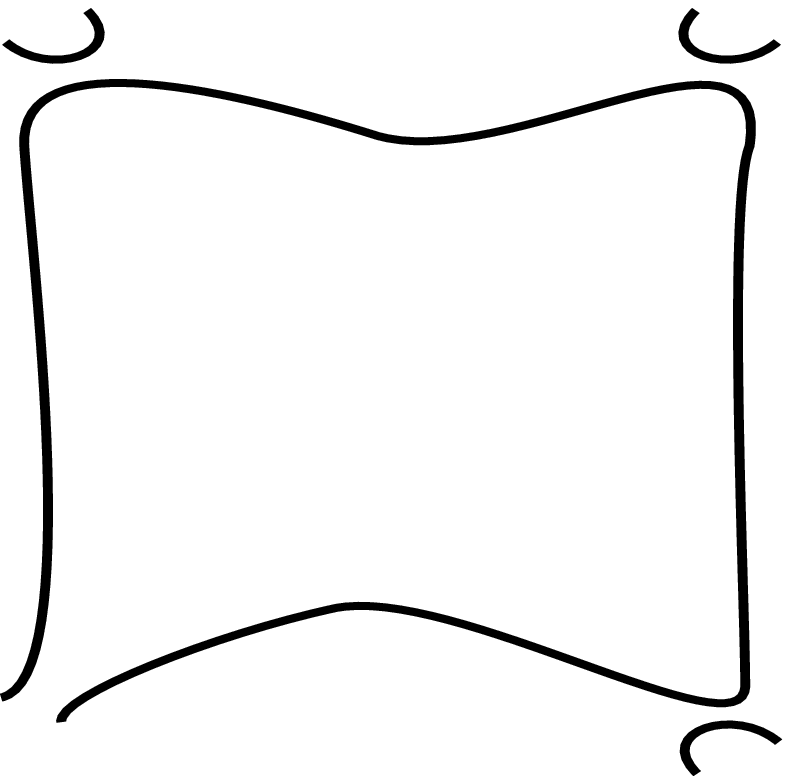}}
\end{minipage}\Bigg)\\&+&\frac{1}{d^3} \quad \begin{minipage}[h]{0.14\linewidth}
\vspace{0pt}
\scalebox{0.2}{\includegraphics{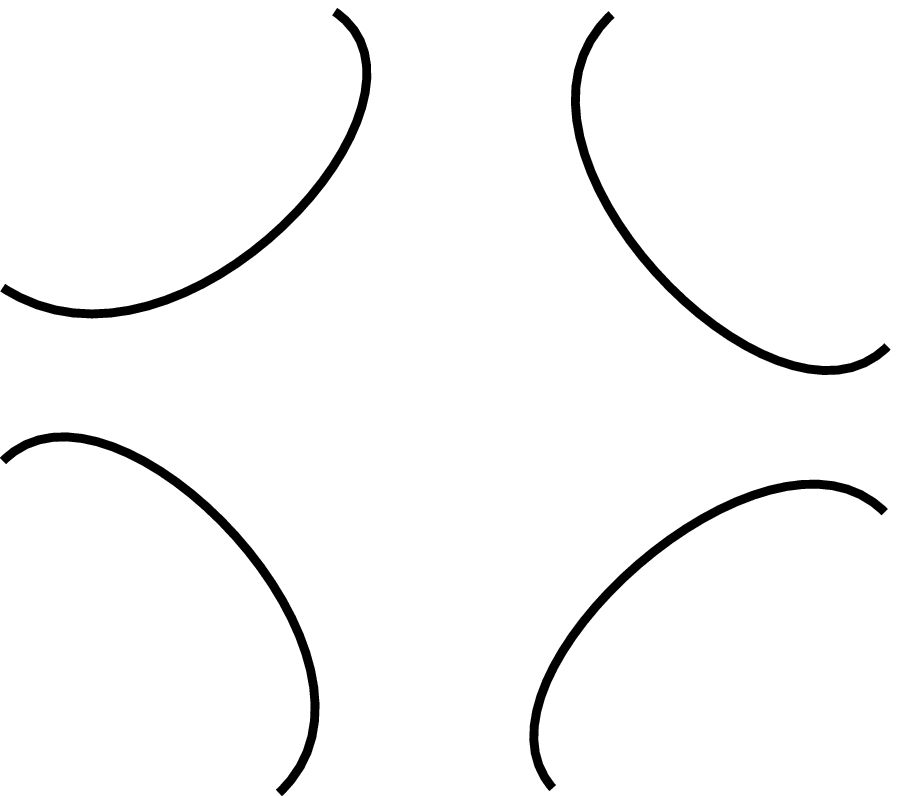}},
\end{minipage}
\end{eqnarray*}

where $d=-A^2-A^{-2}$. One can see that $\frac{C_{2,1}}{d^2} \notin \mathbb{Z}[A,A{-1}]$ . Hence the poles that occur in $[L]_2$ cannot be removed using this simple local argument.  This conjecture is in fact true for $n=1$ as can be seen from equation \ref{n=1}:

\begin{equation*}C_{2,1} \quad 
\begin{minipage}[h]{0.12\linewidth}
\vspace{0pt}
\scalebox{0.08}{\includegraphics{singular_map}}
\tiny{
	\put(-28,40){$1$}
	\put(-28,5){$1$}
	\put(-12,20){$1$}
	\put(-40,20){$1$}
	\put(-1,45){$2$}
	\put(-48,45){$2$}
	\put(-1,0){$2$}
	\put(-48,0){$2$}}
\end{minipage}=(A^2+A^{-2})  
\begin{minipage}[h]{0.12\linewidth}
\vspace{0pt}
\scalebox{0.08}{\includegraphics{singular_map}}
\tiny{
	\put(-28,40){$1$}
	\put(-28,5){$1$}
	\put(-12,20){$1$}
	\put(-40,20){$1$}
	\put(-1,45){$2$}
	\put(-48,45){$2$}
	\put(-1,0){$2$}
	\put(-48,0){$2$}}
\end{minipage}=  \begin{minipage}[h]{0.08\linewidth}
\vspace{0pt}
\scalebox{0.08}{\includegraphics{colored_corssing}}
\tiny{
	\put(-1,45){$2$}
	\put(-48,45){$2$}}
\end{minipage}  
- A^4 
\begin{minipage}[h]{0.1\linewidth}
\vspace{0pt}
\scalebox{0.19}{\includegraphics{one_smoothing_color_1}}
\end{minipage}
- A^{-4}
\begin{minipage}[h]{0.1\linewidth}
\vspace{0pt}
\scalebox{0.19}{\includegraphics{one_smoothing_color}}
\end{minipage}
\end{equation*}
Each one of the three skein elements appearing on the right hand side of the previous equation is a link colored with the Jones-Wenzl projector and hence its evaluation is in $\mathbb{Z}[A,A^{-1}]$. This implies that the evaluation of the term on the left hand side is also in $\mathbb{Z}[A,A^{-1}]$.\\ 

In fact, more can be said here in regard of the integrality. Let $L$ be a link. Use (\ref{greatness}) to write the colored Jones polynomial of $L$ as 
\begin{eqnarray}
\label{new}
\tilde{J}(L,n) =
     \displaystyle\sum\limits_{i=0}^{n}C_{n,i}
  S_{n,i}
  \end{eqnarray}
where $S_{n,i}$ is the skein element shown on the right hand side of equation (\ref{greatness}). One can see that the skein elements $S_{n,0}$ and $S_{n,n}$ are links cabled with the $n^{th}$ Jones-Wenzl projector and hence their evaluations in the Kauffman bracket skein module give an element in $\mathbb{Z}[A,A^{-1}]$. In general this is not true for $S_{n,i}$ when $0< i < n$. However, we conjecture that $C_{n,i}S_{n,i} \in \mathbb{Z}[A,A^{-1}] $ for $0\leq i \leq n$.

\end{document}